\newtheorem{theorem}{Theorem}[section]
\newtheorem{lemma}[theorem]{Lemma}
\newtheorem{cor}[theorem]{Corollary}
\newtheorem{proposition}[theorem]{Proposition}
\newtheorem{alphthm}{Theorem}
\newcommand{\mylabel}[2]{#2\def\@currentlabel{#2}\label{#1}}
\theoremstyle{definition}
\newtheorem{definition}[theorem]{Definition}
\newtheorem{example}[theorem]{Example}
\newtheorem{construction}[theorem]{Construction}
\newtheorem{convention}[theorem]{Convention}
\theoremstyle{remark}
\newtheorem{remark}[theorem]{Remark}
\newtheoremstyle{citedrem}{.5\baselineskip\@plus.2\baselineskip\@minus.2\baselineskip}{.5\baselineskip\@plus.2\baselineskip\@minus.2\baselineskip}{}{}{\itshape}{\itshape .}{5pt plus 1pt minus 1pt}{\thmname{#1}\thmnumber{ \normalfont#2}\thmnote{ \normalfont#3}}
\theoremstyle{citedrem}
\newtheoremstyle{cited}{.5\baselineskip\@plus.2\baselineskip\@minus.2\baselineskip}{.5\baselineskip\@plus.2\baselineskip\@minus.2\baselineskip}{\itshape}{}{\bfseries}{\bfseries .}{5pt plus 1pt minus 1pt}{\thmname{#1}\thmnumber{ #2}\thmnote{ \normalfont#3}}
\theoremstyle{cited}
\newtheoremstyle{citeddef}{.5\baselineskip\@plus.2\baselineskip\@minus.2\baselineskip}{.5\baselineskip\@plus.2\baselineskip\@minus.2\baselineskip}{}{}{\bfseries}{\bfseries .}{5pt plus 1pt minus 1pt}{\thmname{#1}\thmnumber{ #2}\thmnote{ \normalfont#3}}
\theoremstyle{citeddef}
\newtheoremstyle{step}{.25\baselineskip\@plus.1\baselineskip\@minus.1\baselineskip}{.25\baselineskip\@plus.1\baselineskip\@minus.1\baselineskip}{\itshape}{}{\bfseries}{\bfseries .}{5pt plus 1pt minus 1pt}{\thmname{#1}\thmnumber{ #2}\thmnote{ \normalfont(#3)}}
\theoremstyle{step}
\newcommand{\N}{\mathbf{N}}
\newcommand{\R}{\mathbf{R}}
\newcommand{\C}{\mathbf{C}}
\newcommand{\spec}{\mathrm{Spec}}
\newcommand{\na}{\mathrm{na}}
\newcommand{\val}{\mathrm{val}}
\newcommand{\trop}{\mathrm{trop}}
\newcommand{\hyb}{\mathrm{hyb}}
\newcommand{\sk}{\mathrm{Sk}}
\newcommand{\FS}{\mathrm{FS}}
\newcommand{\TY}{\mathrm{TY}}
\newcommand{\CL}{\mathrm{CL}}
\newcommand{\CY}{\mathrm{CY}}
\newcommand{\MA}{\mathrm{MA}}
\newcommand{\RMA}{\mathrm{MA}_{\mathbf{R}}}
\newcommand{\CMA}{\mathrm{MA}_{\mathbf{C}}}
\newcommand{\NAMA}{\mathrm{MA}^{\mathrm{na}}}
\newcommand{\Leb}{\mathrm{Leb}}
\newcommand{\QM}{\mathrm{QM}}
\newcommand{\triv}{\mathrm{triv}}
\newcommand{\NRT}{N_{\mathbf{R}}(\mathbf{T})}
\newcommand{\stepeq}{\overset{\mathrm{Step 4}}{=\joinrel=}}
\DeclarePairedDelimiterX\Set[1]\{\}{#1}
\begin{document}
\title{Non-Archimedean Calabi--Yau Potentials on Certain Affine Varieties}

\author[Ying Wang]{Ying Wang}
\address{Department of Mathematics\\University of Michigan\\Ann Arbor, MI 48109-1043\\USA}
\email{\href{mailto:ywangx@umich.edu}{ywangx@umich.edu}}

\setlength{\abovedisplayskip}{3pt}
\setlength{\belowdisplayskip}{3pt}

\begin{abstract}
We solve a non-Archimedean Monge--Amp\`{e}re equation on the Berkovich analytification of a complex log Calabi--Yau pair whose dual complex is a standard simplex, answering a question of \cite{CL24} and offering a non-Archimedean analog of Ricci-flat metric potentials on complex affine varieties. This work builds on the solution to a complex Monge--Ampère equation obtained by \cite{CL24} and \cite{CTY24}. We also show the suitably rescaled limits of the complex potentials coincide with their non-Archimedean counterparts in some situations, strengthening their connections.
\end{abstract}

\maketitle
\setcounter{tocdepth}{1}

{
  \hypersetup{linkcolor=black}
  \tableofcontents
}

\thispagestyle{empty}

\section{Introduction}
A K\"{a}hler manifold $X$ of dimension $n$, compact or non-compact, is said to be Calabi--Yau if it admits a non-vanishing holomorphic $n$-form $\Omega$, with associated smooth measure $\mu \coloneqq (\sqrt{-1})^{n^2}\Omega \wedge \bar{\Omega}$ on $X$. If $X$ is compact, $\Omega$ and $\mu$ are unique up to scaling, and by \cite{Yau78} any ample line bundle $L$ has a unique up to scaling hermitian metric $\| \cdot \|$ whose curvature form is Ricci flat. This was done by solving the complex Monge--Amp\`{e}re equation, 
\begin{equation}\label{eq:cNA}\CMA(\| \cdot \|) \coloneqq c_1(L, \|\cdot\|)^n = \mathrm{const} \cdot  \mu.\end{equation}

For non-compact $X$, the existence of such metrics is more delicate. A first step is to study the case \[X \coloneq \bar{X} \backslash D,\] where $D$ is a simple normal crossing anticanonical divisor in a smooth projective Fano variety $\bar{X}$. In this setting, there is a natural holomorphic $n$-form $\Omega$ on $X$ with a simple pole along $D$, with associated smooth positive measure \[\mu \coloneqq (\sqrt{-1})^{n^2} \Omega \wedge \bar{\Omega}.\]

To obtain a Calabi--Yau metric on $X$, it has been fruitful to first prescribe a metric ansatz on the normal bundle $N_{D / \bar{X}}$, and then try to complete the ansatz on $X$. For irreducible $D$, the ansatz construction was pioneered by Calabi in \cite{Cal79}, nowadays known as the Calabi ansatz. Recent works \cite{CL24} and \cite{CTY24} have generalized this ansatz to allow $D$ to break into multiple components. These ansatzs all arise from solving a complex Monge--Amp\`{e}re equation that involves a natural measure on the normal bundle, and the solution in \cite{CTY24} uses optimal transport techniques from convex analysis. Once pulled back to $X$ and trivialized by the defining section of $D$, these ansatzs can be identified with functions on $X$. 

When $D$ is smooth (and therefore irreducible), \cite{TY90} constructed a complete Calabi--Yau metric on $X$ which is exponentially asymptotic to the Calabi ansatz, and the decay rate was later improved by \cite{Hei10}. When $D$ has two components, \cite{CL24} solved the generalized Calabi ansatz and completed it to a Calabi--Yau metric on $X$. These metrics are of the form $dd^c \psi$ where $\psi$ is a smooth strictly plurisubharmonic function on $X$ that solves the complex Monge--Amp\`{e}re equation \[\CMA(\psi) = \mu.\] 
We will further explain these results in \S\ref{sec:generalized-calabi-ansatz}, and refer to the aforementioned complex-analytic objects as \emph{Archimedean}. 

A smooth complex (quasi-)projective variety $X$, in fact, has both an Archimedean and a non-Archimedean analytification. The former is a complex K\"{a}hler manifold, also denoted by $X$, while the latter is a Berkovich analytic space $X^\na$. If $X$ is Calabi--Yau, the natural analog of the aforementioned canonical measure $\mu$ on $X$ is the Lebesgue measure on what is called the essential skeleton $\sk(X)$ in $X^\na$. We denote this measure as $\mu^\na = \mathrm{Leb}_{\mathrm{Sk}(X)}$ and will study it in more detail in \S \ref{sec:berkovich-space}. By the work of \cite{CLD25}, which we summarize in \S \ref{sec:pp-on-berk-spaces}, one can also make sense of a non-Archimedean Monge--Amp\`{e}re operator $\NAMA(-)$ on $X^\na$. In light of the established results regarding $\CMA(-)$ on $X$, it is natural to consider the analogous question on $X^\na$: does there exist a function $\psi^\na$ on $X^\na$, regular and plurisubharmonic in a suitable sense, such that \begin{equation}\label{eq:NAMA}\NAMA(\psi^\na) = \mu^\na?\end{equation}

The main goal of our work is to show that in certain situations, we can build on the generalized Calabi ansatz to solve (\ref{eq:NAMA}) on the Berkovich space $X^\na$.\footnote{When $X$ is projective and more generally defined over $\C(\!(t)\!)$, this equation was solved in \cite{BFJ15} by a variational method.} Because of the analogy between the canonical measures $\mu$ and $\mu^\na$, we call solutions to (\ref{eq:NAMA}) the \emph{non-Archimedean Calabi--Yau potentials}. 

\begin{alphthm}\label{intro:thm-1}
    Let $n, d$ be integers with $n > d \geq 1$, and let $\bar{X}$ be a smooth projective Fano variety of dimension $n$, with a reduced simple normal crossing anticanonical divisor $D$ whose dual complex is the standard $(d-1)$-simplex. Set $X = \bar{X} \backslash D$, which is an affine Calabi--Yau variety. Then there is a non-Archimedean Calabi--Yau potential, namely a continuous plurisubharmonic function $\psi^\na$ on the Berkovich analytification $X^\na$, that solves \[\NAMA(\psi^\na) = \mu^\na.\] The solution $\psi^\na$ is built on the Calabi ansatz from \cite{Cal79}, \cite{TY90}, \cite{CL24}, and \cite{CTY24}.
\end{alphthm}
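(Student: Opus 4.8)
The plan is to transport the generalized Calabi ansatz of \S\ref{sec:generalized-calabi-ansatz} onto the Berkovich space and then verify the non-Archimedean Monge--Amp\`{e}re equation fiberwise over the essential skeleton. First I set up the non-Archimedean picture. Since the dual complex of $D$ is the standard $(d-1)$-simplex, the log smooth pair $(\bar X, D)$ has a canonical skeleton, and because $X = \bar X\setminus D$ is affine this is the cone over $\Delta(D)$: a rational polyhedral cone complex $\sk(X)\subset X^{\na}$ glued from the monomial cones $\sigma_I\cong\R_{\ge 0}^{I}$ attached to the strata $D_I=\bigcap_{i\in I}D_i$, with integral-affine coordinates $a_i=v(z_i)$ for local equations $z_i$ of $D_i$. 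There is a deformation retraction $\rho_X\colon X^{\na}\to\sk(X)$, and near the generic point of a stratum $D_I$ the pair $(X^{\na},\rho_X)$ is modeled on $\mathbf{G}_m^{I,\na}\times(D_I^{\circ})^{\na}$ with $\rho_X$ the tropicalization in the torus factor. The simple pole of $\Omega$ along each $D_i$ together with adjunction on the strata identifies $\mu^{\na}=\Leb_{\sk(X)}$ with the Lebesgue measure coming from the lattices $\Z^{I}$.

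Next I build the candidate potential. By \cite{CTY24} --- and by \cite{Cal79,TY90} when $d=1$ and \cite{CL24} when $d=2$ --- the generalized Calabi ansatz, expressed in moment (tropical) coordinates, is governed by a real Monge--Amp\`{e}re equation $\RMA(\Phi)=c\cdot\Leb$ for a convex function $\Phi$ on the cone $\sk(X)$, a Legendre-type transform produced by optimal transport, with prescribed growth along the faces $\sigma_I$ and toward infinity. Pulling $\Phi$ back through $\rho_X$ and using the semi-flat normal-bundle model underlying the ansatz --- Ricci-flat on the fibers of each stratum --- to fill in the transverse directions produces a function $\psi^{\na}$ on $X^{\na}$. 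Convexity of $\Phi$ and its monotonicity toward the cone point make $\psi^{\na}$ plurisubharmonic, and its continuity, including up to and across the lower-dimensional faces and near infinity, follows from the regularity of $\Phi$ and the explicit local models.

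Finally I verify $\NAMA(\psi^{\na})=\mu^{\na}$. By the description of $\NAMA$ on toroidal models from \cite{CLD25} (\S\ref{sec:pp-on-berk-spaces}), this measure is carried by $\sk(X)$ and, in the chart over $D_I$, factors as a transverse semi-flat contribution --- which by the ansatz computes the canonical measure of the stratum $D_I^{\circ}$ --- times the real Monge--Amp\`{e}re measure of $\Phi$ along $\sigma_I$; the product equals $c\cdot\Leb_{\sigma_I}$ by the previous step. Normalizing $c$ to $1$ by the harmless rescaling $\Omega\mapsto\lambda\Omega$, patching charts along common faces, and checking that no mass accumulates at the cone point or escapes to infinity yields $\NAMA(\psi^{\na})=\mu^{\na}$ (away from $\sk(X)$ there is nothing to check beyond continuity and plurisubharmonicity, already in hand).

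The hard part is the verification just sketched: $\NAMA=(dd^c-)^n$ has top degree $n$ while $\sk(X)$ has dimension $d\le n-1$, so the identity can hold only because the transverse semi-flat directions of the normal-bundle model conspire with the $n$ remaining $dd^c$-factors to reconstitute precisely the Lebesgue measure on the skeleton. Making this rigorous requires a careful fiberwise analysis of the toroidal local models and of the piecewise-linear retraction $\rho_X$, rather than the purely toric situation where the corresponding statement is classical; it must be carried out with the weak, pluripotential-theoretic notion of solution throughout, since the optimal-transport potential $\Phi$ is only $C^{1,1}$-regular; and the non-compactness of $X^{\na}$ adds the further burden of controlling $\psi^{\na}$ near infinity and along the incidence of cones of different dimensions.
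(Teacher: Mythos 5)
There is a genuine gap, and it sits exactly where you flag "the hard part": your verification of $\NAMA(\psi^{\na})=\mu^{\na}$ rests on an asserted factorization of $\NAMA$ in a chart over a stratum into "a transverse semi-flat contribution times the real Monge--Amp\`ere measure of $\Phi$ along $\sigma_I$." No such product decomposition exists in the \cite{CLD25} framework (and there is no "semi-flat model" on the non-Archimedean side to supply the transverse factor). The paper's actual mechanism is different and quite concrete: the candidate is $\psi^{\na}(\nu)=u(\tfrac{e}{b_1}\nu(s_1),\dots,\tfrac{e}{b_d}\nu(s_d))$, where $\nu(s_i)=\phi_{\triv}-\log|s_i^{e/b_i}|$ and the trivial metric is a max over auxiliary sections $s_0',\dots,s_m'$ of $eL$ chosen in general position. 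One tropicalizes via the moment map to $\mathbf{G}_m^{N+1}/\mathbf{G}_m$ built from all these sections, decomposes the characteristic polyhedron into cones of quasimonomial valuations, and computes $\RMA(v|_{\mathbf{L}_\sigma})$ on each $n$-dimensional affine span by a subgradient-polytope volume calculation. The $n-d$ "missing" dimensions are produced not by transverse fibers but by the $n-d+1$ active branches of the max: at a point of $\Delta=\tau^{\trop}(\sk(X))$ the subgradient polytope is an $(n-d)$-simplex with edge length $\sum_i\partial_i u$, so $\RMA(v|_{\mathbf{L}})=(n-d)!^{-1}(\sum_i\partial_i u)^{n-d}\det(D^2u)\,\Leb_\Delta$, which is constant precisely by the Collins--Tong--Yau PDE. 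Your sketch never supplies this (or any other rigorous) bridge between the $d$-dimensional real MA equation and the degree-$n$ operator, and your construction of $\psi^{\na}$ by pulling back through a retraction and "filling in transverse directions" is not shown to lie in $\mathrm{CPSH}$ in the sense required to even apply $\NAMA$.

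Two further ingredients of the paper's proof are absent from your outline. First, the Neumann-type boundary condition $\sum_i\partial u/\partial t_i=0$ on $\partial\R^d_{\ge 0}$ is what prevents $\RMA(v|_{\mathbf{L}_\sigma})$ from depositing singular mass on $\partial\Delta$ (equivalently on $\partial\sk(X)$) for the cells whose span does not contain $\Delta$; without invoking it, "no mass accumulates at the cone point" is an unsubstantiated claim, and it is in fact the reason this boundary condition is imposed in \cite{CL24,CTY24}. Second, since $u$ is not $C^2$ up to the boundary, $\NAMA(\psi^{\na})$ is defined as a limit over smooth convex approximations, and the indicator functions $\mathbf{1}_\sigma$ of the \emph{closed} cells can trap mass outside $\sigma$ in the limit; the paper resolves this with a one-sided mollifier supported in $\{X_0,\dots,X_{n-d-1}>0\}$ together with an argument that the mollified function agrees with a degenerate convex function (depending on fewer than $n$ variables) off $\sigma$. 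Your proposal does not identify this issue, so even granting a correct local formula the passage to the global identity of measures would remain open.
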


Since $\mathcal{O}(D)|_X$ is a trivial line bundle on $X$, the solution $\psi^\na$ in Theorem \ref{intro:thm-1} is a function. We mention that, for $d > 2$, the existence of a global Archimedean Calabi--Yau potential on general $X$ is yet unknown, while Theorem~\ref{intro:thm-1} demonstrates the existence of a global non-Archimedean Calabi--Yau potential on $X^\na$. 

Here, we would also like to point out that both an Archimedean Calabi ansatz and a non-Archimedean (NA) Calabi--Yau potential come from a convex function on $\R^d_+$ obtained in \cite{CTY24}. This close tie between convex geometry and both Archimedean and NA geometries provides hope that NA geometry can inform existence or non-existence of Archimedean Calabi--Yau potentials on general smooth affine varieties. 

Indeed, we will strengthen the connection between Archimedean and NA Calabi--Yau potentials by showing a continuity statement on a suitable space encapsulating both Archimedean and NA geometries. This space, named the hybrid space, has been constructed and studied in \cite{Ber09}, \cite{BJ17}, \cite{Fav20}, \cite{Shi22}, and \cite{LP24}. In simple terms, the hybrid space associated to a quasi-projective variety $X$ is a topological space admitting a map to the unit interval, 
\[\lambda: X^\hyb \to [0,1]\]
such that $\lambda^{-1}(0)$ is the Berkovich analytification of $X$, whereas $\lambda^{-1}(\tau)$ is homeomorphic to the Archimedean (holomorphic) analytification of $X$ for all $\tau \in (0,1]$. By the work of \cite{Fav20} and \cite{PS23}, there is a pluripotential theory on these hybrid spaces, rendering them an appropriate setting for comparing the Archimedean and NA objects of concern. 

To state the hybrid results uniformly, let the triple $(X, \psi^{\mathrm{a}}, \psi^\na)$ denote any of the following: 
\begin{enumerate}[leftmargin=*]
    \item\label{intro:TY} $\bar{X}$ is a smooth projective Fano variety, $D$ is an irreducible smooth anticanonical divisor, $X = \bar{X} \backslash D$, with $\psi^{\mathrm{a}}$ the Tian--Yau potential from \cite{TY90}, and $\psi^\na$ from Theorem \ref{intro:thm-1};
    \item\label{intro:CL} $\bar{X}$ is a smooth projective Fano variety, $D$ is an anticanonical divisor with irreducible components $D_1, D_2$ such that $D_1 \cap D_2$ is smooth irreducible, $X = \bar{X} \backslash D$, with $\psi^{\mathrm{a}}$ the Collins--Li potential from \cite{CL24}, and $\psi^\na$ from Theorem \ref{intro:thm-1};
    \item\label{intro:CA} we can also consider the Calabi model space. Let $D$ be a reduced simple normal crossing anticanonical divisor in some smooth projective Fano variety $Y$, with the requirement that the deepest intersection stratum $Z$ of $D$ is irreducible. Let $\bar{X}$ be the total space of $N_{Z / Y}$, and $X$ is the complement of the zero section of $\bar{X}$. Now $\psi^{\mathrm{a}}$ can be any smooth extension of the generalized Calabi ansatz from \cite{CTY24} that is bounded away from the zero section, and $\psi^\na$ comes from Theorem \ref{intro:thm-1}.  
\end{enumerate}

\begin{alphthm}\label{intro:thm-2}
    In situations (\ref{intro:TY}), (\ref{intro:CL}) and (\ref{intro:CA}), under some explicit scaling $c(\tau)$, the function \[\psi^\hyb = \begin{cases}
        c(\tau) \psi^{\mathrm{a}} & \tau \neq 0\\
        \psi^\na & \tau = 0
    \end{cases}\] on $X^\hyb$ is continuous. 
\end{alphthm}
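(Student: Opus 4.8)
The plan is to verify continuity of $\psi^{\hyb}$ pointwise. On $\lambda^{-1}\bigl((0,1]\bigr)\cong X^{\an}\times(0,1]$ the function equals $c(\tau)\psi^{\mathrm{a}}$, which is continuous there because $\psi^{\mathrm{a}}$ is smooth on the complex manifold $X$ and $c$ is a continuous function of $\tau$ (with, as recalled below, $c(\tau)\to 0$ as $\tau\to 0$); so the real content is continuity at each $v_0\in\lambda^{-1}(0)=X^{\na}$. Fix such a $v_0$ and a sequence $(x_k,\tau_k)\in X^{\an}\times(0,1]$ with $(x_k,\tau_k)\to v_0$ in $X^{\hyb}$; since $\lambda$ is continuous, $\tau_k\to 0$, and it suffices to show $c(\tau_k)\psi^{\mathrm{a}}(x_k)\to\psi^{\na}(v_0)$. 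By the construction of the hybrid space (\cite{Ber09,Fav20,Shi22,LP24,PS23}), convergence $(x_k,\tau_k)\to v_0$ means $N(\tau_k)^{-1}\bigl(-\log|f(x_k)|\bigr)\to v_0(f)$ for $f$ in a generating set of coordinates, where $N(\tau)\to+\infty$ is the normalizing function built into the hybrid structure; the explicit scaling in the statement is $c(\tau)=N(\tau)^{-\alpha}$, with $\alpha=\tfrac{n+d}{n}$ the homogeneity degree of the convex function $g$ on $\R^d_{+}$ produced in \cite{CTY24}. Applying this to the coordinate functions $z_i$ cutting out the components of the boundary divisor (in case~(\ref{intro:CA}), the fibre coordinates of $N_{Z/Y}$) gives $N(\tau_k)^{-1}\bigl(-\log|z_i(x_k)|\bigr)\to v_0(z_i)=:r^0_i\ge 0$, and $r^0=(r^0_1,\dots,r^0_d)$ is the image of $v_0$ under the retraction $X^{\na}\to\sk(X)\cong\R^d_{\ge 0}$; by the construction in the proof of Theorem~\ref{intro:thm-1}, $\psi^{\na}$ is the pullback of $g$ along this retraction, so $\psi^{\na}(v_0)=g(r^0)$, with $g(0)=0$.

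Next I would record the behaviour of $\psi^{\mathrm{a}}$ near the boundary. Fix hermitian metrics on the line bundles $\mathcal{O}_{\bar X}(D_i)$ and set $\varrho_i:=-\log\|z_i\|_i^2\to+\infty$ on approach to the $i$-th boundary component. The substance of the generalized Calabi ansatz (\cite{Cal79,TY90,Hei10,CL24,CTY24}) is that on a fixed neighbourhood $U$ of the boundary one has $\psi^{\mathrm{a}}=g(\varrho_1,\dots,\varrho_d)+E$, with $E$ satisfying $|E|=O\bigl((1+|\varrho|)^{\beta}\bigr)$ uniformly on $U$ for some $\beta<\alpha$: in cases~(\ref{intro:TY}) and~(\ref{intro:CL}), $E$ is in fact exponentially small in the distance to the boundary by \cite{TY90,Hei10,CL24}, while in case~(\ref{intro:CA}) the ansatz is $g(\varrho_1,\dots,\varrho_d)$ up to a bounded potential pulled back from $Z$, and the ambiguity among admissible smooth extensions is bounded on $U$ as well. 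On the complement $X\setminus U$, which is compact in $X$, $\psi^{\mathrm{a}}$ is bounded. (Replacing $\|z_i\|_i$ by $|z_i|$ or changing the auxiliary metrics alters the argument of $g$ by bounded amounts, which may be absorbed into $E$.)

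Then I would take the limit. If $r^0=0$ — e.g.\ $v_0$ the Gauss point, or $x_k$ confined to a compact subset of $X$ — then $\psi^{\na}(v_0)=g(0)=0$; along a subsequence with $x_k\in X\setminus U$ one has $\psi^{\mathrm{a}}(x_k)$ bounded and $c(\tau_k)=N(\tau_k)^{-\alpha}\to 0$, and along a subsequence with $x_k\in U$ one has $|\varrho(x_k)|=o\bigl(N(\tau_k)\bigr)$, so $g(\varrho(x_k))=O\bigl(|\varrho(x_k)|^{\alpha}\bigr)=o\bigl(N(\tau_k)^{\alpha}\bigr)$ and likewise $E(x_k)=o\bigl(N(\tau_k)^{\alpha}\bigr)$; hence $c(\tau_k)\psi^{\mathrm{a}}(x_k)\to 0=\psi^{\na}(v_0)$. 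If some $r^0_i>0$, then $x_k\in U$ eventually and $\varrho_i(x_k)=N(\tau_k)\bigl(r^0_i+o(1)\bigr)$ for every $i$ (with $r^0_i=0$ permitted), whence, using the homogeneity of $g$,
\[
c(\tau_k)\psi^{\mathrm{a}}(x_k)=N(\tau_k)^{-\alpha}\,g\bigl(\varrho(x_k)\bigr)+N(\tau_k)^{-\alpha}E(x_k)=g\bigl(N(\tau_k)^{-1}\varrho(x_k)\bigr)+o(1)\longrightarrow g(r^0),
\]
the error vanishing because $\beta<\alpha$ and the limit passing inside $g$ by continuity of $g$ up to the boundary of $\R^d_{\ge 0}$. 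Up to the normalizations fixed in the proof of Theorem~\ref{intro:thm-1}, this equals $\psi^{\na}(v_0)$, which establishes continuity at $v_0$ and hence on all of $X^{\hyb}$.

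The main obstacle is the uniform asymptotic expansion $\psi^{\mathrm{a}}=g(\varrho)+O(|\varrho|^{\beta})$ with $\beta<\alpha$ used in the second step: in cases~(\ref{intro:TY}) and~(\ref{intro:CL}) this is precisely the quantitative ``convergence to the Calabi ansatz'' of \cite{TY90} (sharpened by \cite{Hei10}) and of \cite{CL24}, and it must be applied with control holding uniformly up to and along every stratum of the boundary, which entails patching the local models near the lower-dimensional faces and corners of the skeleton — feasible because the dual complex is the standard simplex, but delicate. Two smaller points also require care: identifying the hybrid topology near $\lambda^{-1}(0)$ with the concrete data of the quantities $N(\tau)^{-1}(-\log|z_i|)$ together with the boundedness of $\psi^{\mathrm{a}}$ off $U$, which follows from the construction of $X^{\hyb}$ via an affine (resp.\ projective) embedding and properness of $\lambda$ over $[0,1]$; and checking that the choices made in Theorem~\ref{intro:thm-1} — the metrics on $\mathcal{O}_{\bar X}(D_i)$, the factor relating $-\log|z_i|^2$ to $v(z_i)$, and the constant in $c(\tau)$ — are mutually consistent, so that the limit lands exactly on $\psi^{\na}$ rather than on a nonzero scalar multiple.
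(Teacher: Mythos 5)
Your proposal follows essentially the same route as the paper: identify the hybrid scaling as $(\log t^{-1})^{-(n+d)/n}$, write $\psi^{\mathrm{a}}$ as the homogeneous ansatz $u$ applied to Fubini--Study-type coordinates plus subleading corrections, use homogeneity and boundary continuity of $u$ to pass the normalization inside, and split on whether the center of the limit point lies in $X$ or on $D$. The only caveats are minor: convergence in $X^{\hyb}$ must be tested with nets rather than sequences (the space is not first countable), and the uniformity near corners that you flag as delicate is not actually an issue here since the paper's correction terms ($\psi_{\CY}$, $\psi_{\mathrm{pos}}$, $\theta$) are globally bounded or compactly supported.
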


As a direct consequence of the volume convergence result in \cite{Shi22}, we also have weak convergence of the Monge--Amp\`{e}re measures of these metrics on $X^\hyb$:\[d(\tau) \CMA(\psi^{\mathrm{a}}) = d(\tau) \mu \to \mu^\na = \NAMA(\psi^\na),\] where $d(\tau)$ is again some explicit scaling.  

We now outline the proof strategy of Theorem~\ref{intro:thm-1}. We use tropicalization maps to work with a local pluripotential theory on Berkovich spaces developed systematically in \cite{CLD25}. This allows us to translate the non-Archimedean Monge--Ampère equation into a real Monge--Ampère equation, and also translate the function $\psi^\na$ on the Berkovich space into a convex function $v$ defined on an Euclidean space. Then, we interpret the PDEs studied in \cite{CL24} and \cite{CTY24} as computing the volume of subgradient polytopes associated to the convex function $v$. In addition, we use the boundary condition for the generalized Calabi ansatz, first prescribed in \cite{CL24} and later adapted to the general situation in \cite{CTY24}, to exclude singular charges of the real Monge--Amp\`{e}re measure of $v$. 

As a toy example, for $t > 0$, the volume of the convex hull of the points $\{t^{1/n}e_i\}_{i=1}^n$ is linear in $t$, where $\{e_i\}_{i=1}^n$ is the standard basis of $\mathbf{R}^n$. This convex hull will turn out to be the subgradient polytope of $\psi^\na$ at $\mathrm{Sk}(X)$ when $D$ is irreducible. Interestingly, by the power rule of integration, the anti-derivative of $t^{1/n}$ is a constant multiple of $t^{(n+1)/n}$, whose exponent is exactly the growth rate showing up in the classical Calabi ansatz. This phenomenon generalizes when $D$ has more than one component. Theorem \ref{intro:thm-1} provides justification for the proposed boundary data in \cite{CTY24}, and answers a question of \cite{CL24}.\footnote{This question is not included in the published version \cite{CL24}, but appears as \S 2.8.3 ``Non-Archimedean Meaning?'' in the first arXiv submission. }

We make two remarks on Theorem \ref{intro:thm-2}. First, the appropriately rescaled pointed Gromov-Hausdorff limit of $X$ coincides with the essential skeleton $\mathrm{Sk}(X)$, and the metric on the Gromov-Hausdorff limit is essentially the generalized Calabi ansatz; see \cite[Proposition 5.2]{CL24}. So Theorem \ref{intro:thm-2} should be viewed as a non-Archimedean realization of the pointed Gromov-Hausdorff convergence on the potential level. 

Second, we discuss the connection of the current work to the SYZ and Kontsevich--Soibelman conjectures, which predict that, for certain degeneration $X \to \mathbf{D}^*$ of projective Calabi--Yau varieties, each fiber admits a special Lagrangian torus fibration over the essential skeleton $\mathrm{Sk}(X)$. Yang Li \cite{Li23} has reduced a metric version of the SYZ conjecture to showing that the \cite{BFJ15} solution to the NA MA equation (\ref{eq:NAMA}) is invariant under a natural retraction map on $X^\na$. Building on this, \cite{HJMM24}, \cite{Li24a}, and \cite{AH23} have shown the metric SYZ conjecture holds for a large class of Calabi--Yau hypersurfaces, where a central difficulty was that the affine structure on $\mathrm{Sk}(X)$ has singularities. In our setup, the generalized Calabi ansatz also admits a torus fibration over $\mathrm{Sk}(X)$, but the difficulty in the projective setting is not present since the essential skeleton has a global affine structure. 

In the concluding section \S \ref{sec:concluding}, we remark on how the measure convergence result from \cite{Shi22} can in fact recover the homogeneous degree of the generalized Calabi ansatz obtained in \cite{CL24}. We also discuss Odaka's conjecture \cite{Oda20} which compares volume growth dimension and essential skeleton dimension.
    
\subsection*{Organization} In \S \ref{sec:generalized-calabi-ansatz} we review the construction of the generalized Calabi ansatz. We then collect facts about Berkovich spaces in \S \ref{sec:berkovich-space} and the non-Archimedean Monge--Ampère operator in \S \ref{sec:pp-on-berk-spaces}. In \S \ref{sec:non-archimedean-metric}, we prove Theorem~\ref{intro:thm-1} in five steps. In \S \ref{sec:hybrid-spaces} we review the theory of hybrid spaces. Then we prove Theorem~\ref{intro:thm-2} on appropriate hybrid spaces in \S \ref{sec:hybrid-continuity}. Some remarks are given in \S \ref{sec:concluding}. 

\subsection*{Notation} 
\begin{itemize}
    \item The symbol $[k]$ refers to the ordered set $[1, \cdots, k].$
    \item To denote a point in a Berkovich space, we will sometimes switch between multiplicative notation using multiplicative (semi)norms $|\cdot|$, or additive notation using (semi)valuations $\nu$. They are related by $\nu(\cdot) = -\log |\cdot|.$
    \item For a convex set $\sigma \subseteq \mathbf{R}^m$, we let $\mathbf{L}_\sigma$ denote the smallest affine space in $\mathbf{R}^m$ containing $\sigma$, and let $\mathbf{L}_{\sigma}^\vee$ denote its dual space. 
    \item We use additive notation for line bundles. For example, $L_1 + L_2$ means $L_1 \otimes L_2$.
    \item We often use the term Archimedean to refer to the complex-analytic situation, in order to highlight the contrast with the non-Archimedean one. 
    \item We use the acronyms NA for non-Archimedean, MA for Monge--Amp\`{e}re, FS for Fubini--Study, and SNC for simple normal crossing.
\end{itemize}

\subsection*{Acknowledgment}
I am grateful to my advisor Mattias Jonsson for his invaluable inspiration, encouragement, and feedback throughout the preparation of this work. I thank my academic sister Yueqiao Wu for many fun and exciting discussions. I also thank Tristan Collins, Henri Guenancia, Yang Li, Yuji Odaka, and Valentino Tosatti for explaining their work, sharing their insights, and offering helpful comments. This project was supported by NSF grants DMS-2154380 and DMS-2452797. 

\section{The generalized Calabi ansatz}\label{sec:generalized-calabi-ansatz}

\subsection{General Setup}\label{setup}\hfill

We review the construction and solution of the generalized Calabi ansatz from \cite{CL24} and \cite{CTY24}, which recover the classical Calabi ansatz from \cite{Cal79} and \cite{TY90}. The construction of Calabi–Yau potentials asymptotic to this ansatz, following \cite{TY90} and \cite{CL24}, is deferred to later sections \S \ref{Conv:TY-metric} and \S \ref{Conv:CL-metric} in the course of proving Theorem~\ref{intro:thm-2}. 

Let $\bar{X}$ be a smooth complex projective Fano variety of dimension $n$. Let $L$ be an ample $\mathbf{Q}$-line bundle satisfying $bL = - K_{\bar{X}}$ for some $b \in \mathbf{Q}_{> 1}$. Fix any integer $d \in [1,n-1]$, and any $b_1, \cdots, b_d \in \mathbf{Q}_{> 0}$ such that each $b_iL$ is a $\mathbf{Z}$-line bundle and $b_1 + \cdots + b_d = b$. Fix any reduced simple normal crossing divisor $D = D_1 + \cdots + D_d \in |{-}K_{\bar{X}}|$ with $D_i \in |b_iL|$. 

Because $d \leq n-1$, repeated applications of the Lefschetz hyperplane theorem and adjunction show that the deepest intersection stratum $Z \coloneqq D_1 \cap \cdots \cap D_d$ is a non-empty, smooth, compact, and connected Calabi--Yau variety, while any other intersection stratum is Fano.

\begin{example}
    Let $\bar{X} = \mathbf{P}^{n}, L = \mathcal{O}_{\mathbf{P}^n}(1)$ and $d < n$. Pick integers $b_1, \cdots, b_d \geq 1$ such that $b_1 + \cdots + b_d = n+1$. If we take general members $D_i \in |b_i L|$, then $D = D_1 + \cdots + D_d$ serves as an example of this setup.
\end{example}

\subsection{Calabi model space}\label{sec:calabi-model-space}\hfill

In fact, two Calabi--Yau varieties arise from the pair $(\bar{X}, D)$. As we have just mentioned, the deepest intersection stratum $Z$ of $D$ is a compact Calabi--Yau. On the other hand, the complement $X \coloneqq \bar{X} \backslash D$ is an affine Calabi--Yau variety, because it has a non-vanishing regular $n$-form with a simple pole along $D$, and this form is unique up to scaling.  

While the existence of Calabi--Yau metrics on $Z$ has been well understood since \cite{Yau78}, far less is known on $X$. To construct such metrics on $X$, the first step is to prescribe an ansatz in the region on $X$ near $Z$. In light of the tubular neighborhood theorem, a natural model for this region is the normal bundle $N_{Z / \bar{X}}$, which by the adjunction formula splits as $N_{Z / \bar{X}} = b_1L|_Z \oplus \cdots \oplus b_d L|_Z.$
Let $N_{Z/\bar{X}}^\times$ be the complement of the union of the zero section in each factor $b_iL|_Z$. We denote by $\Omega$ a regular non-vanishing $n$-form on $N_{Z/\bar{X}}^\times$ with a simple pole along the zero section of each $b_iL|_Z$; again $\Omega$ is unique up to scaling. 

Because the first Chern class of $L|_Z$ is ample, by \cite{Yau78} there is some hermitian metric on $L|_Z$, written additively as $-\log h$, whose curvature form is Ricci flat. After taking multiples, we also obtain metrics $-\log h_i$ on each $b_i L|_Z$. 

\begin{definition}
    The \emph{generalized Calabi ansatz} is a smooth convex function \[u\colon \mathbf{R}^d_{> 0} \to \mathbf{R}\] such that the function $\psi^{\mathrm{a}} \coloneqq u(-\log h_1, \cdots, -\log h_d)$, defined on some open subset $U_Z$ of $N_{Z/\bar{X}}^\times$ near $N_{Z / \bar{X}} \backslash N_{Z / \bar{X}}^{\times}$, solves the Monge--Ampère equation, \begin{equation}\label{eq:normal-bundle-MA}(dd^c \psi^{\mathrm{a}})^n = \mathrm{const} \cdot \Omega \wedge \bar{\Omega}.\end{equation}
    The resulting structure $(U_Z, \Omega, dd^c\psi^{\mathrm{a}})$ is called the \emph{Calabi model space}. Here, the superscript $(-)^\mathrm{a}$ indicates an Archimedean, i.e. complex-analytic, object. 
\end{definition}

Because the curvature form of $-\log h$ is Ricci flat, (\ref{eq:normal-bundle-MA}) reduces to 
\begin{equation}\label{eq:normal-bundle-MA-2}(\det D^2u) (\sum_{i=1}^d b_i \frac{\partial u}{\partial t_i})^{n-d} = \mathrm{const}.\end{equation}

When $d = 1$, Equation (\ref{eq:normal-bundle-MA-2}) becomes
\[u'' (u')^{n-1} = \mathrm{const},\]
whose general solutions are of the form \[u = a_1(nt + a_2)^{(n+1)/n} + a_3\] for some constants $a_1 > 0, a_2, a_3$. After specifying these constants appropriately, this is the classical Calabi ansatz from \cite{Cal79} and \cite{TY90}. 

When $d \geq 2$, Equation (\ref{eq:normal-bundle-MA-2}) is a fully nonlinear second order PDE. In \cite{CL24}, Collins--Li solved the case $d = 2$ by reducing it to an ODE via a homogeneity ansatz, and then imposing a suitable boundary condition. Generalizing this boundary condition, Collins--Tong--Yau \cite{CTY24} solved (\ref{eq:normal-bundle-MA-2}) for any $1 < d < n$ using optimal transport techniques, and formulated a Liouville-type conjecture addressing uniqueness.

\begin{theorem}{(\cite{CTY24}, Corollary 1.1, Theorem 1.2, Proposition 5.1)}\label{thm:CTY}\\
    Let $t_i$ be coordinates of $\mathbf{R}^d$ and $c$ any positive constant. The boundary value problem 
    \[ \begin{cases} 
        \det(D^2 u)(\displaystyle\sum_{i=1}^d \displaystyle\frac{\partial u}{\partial t_i})^{n-d} \ = c \text{ on } \mathbf{R}^d_{> 0}\\
        
        \displaystyle\sum_{i=1}^d \displaystyle\frac{\partial u}{\partial t_i} = 0 \text{ on } \partial \mathbf{R}^d_{\geq 0}. \end{cases}\]
admits a positive convex solution in $C^{1,\alpha}(\mathbf{R}^d_{\geq 0}) \cap C^{\infty}(\mathbf{R}^d_{> 0})$, which is additionally homogeneous of degree $\frac{n+d}{n}$. \end{theorem}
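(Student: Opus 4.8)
The plan is to solve the boundary value problem through the optimal-transport reading of the Monge--Amp\`ere operator, using Legendre duality to trade the gradient-weighted equation for one with a prescribed right-hand side. \textbf{Step 1 (Legendre duality).} If $u$ is a convex solution on $\mathbf{R}^d_{>0}$ and $v := u^{*}$ is its Legendre transform, then $D^2 v(\nabla u(t)) = (D^2 u(t))^{-1}$ wherever $u$ is smooth and strictly convex, and since $y_i = \partial_i u(t)$ at $y = \nabla u(t)$, the equation $\det(D^2 u)(\sum_i \partial_i u)^{n-d} = c$ is equivalent to
\[
\det D^2 v(y) \;=\; \tfrac1c\Bigl(\textstyle\sum_{i=1}^d y_i\Bigr)^{n-d}
\]
on the interior of the gradient image $\Omega^{*} := \nabla u(\mathbf{R}^d_{>0})$; equivalently, $\nabla u$ is the Brenier map sending the measure $c\,dt$ on $\mathbf{R}^d_{>0}$ to the measure $(\sum_i y_i)^{n-d}\,dy$ on $\Omega^{*}$. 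The boundary condition $\sum_i \partial_i u = 0$ on $\partial\mathbf{R}^d_{\geq 0}$ says exactly that $\nabla u$ carries $\partial\mathbf{R}^d_{\geq 0}$ into the hyperplane $\{\sum_i y_i = 0\}$; combined with convexity and the positivity of $u$ this pins down $\Omega^{*}$ (equal to $\mathbf{R}^d_{>0}$, with $\nabla u$ collapsing $\partial\mathbf{R}^d_{\geq 0}$ to the origin after the normalization $u(0)=0$; for $d=1$ this is just $u'\colon(0,\infty)\to(0,\infty)$ with $u'(0)=0$).

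\textbf{Step 2 (Existence and regularity).} The dual equation has a fixed, smooth, strictly positive density on the interior, so producing $v$ --- equivalently the optimal map $\nabla u$ --- is a second boundary value problem for Monge--Amp\`ere between the matched measures $c\,dt$ and $(\sum_i y_i)^{n-d}\,dy$ on $\mathbf{R}^d_{>0}$. One builds the convex potential by Alexandrov's method, solving on bounded convex sub-domains and exhausting, or invokes Brenier's theorem once the (infinite) masses are compared. Interior smoothness of $u$ on $\mathbf{R}^d_{>0}$ then follows from Caffarelli's interior regularity, the densities being locally bounded away from $0$ and $\infty$, and $C^{1,\alpha}$ regularity up to $\mathbf{R}^d_{\geq 0}$ from Caffarelli--Urbas boundary regularity for the second boundary value problem.

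\textbf{Step 3 (Homogeneity).} The equation and the boundary condition are invariant under $u \mapsto \lambda^{-\alpha}u(\lambda\,\cdot\,)$ with $\alpha=(n+d)/n$; correspondingly the scalings $t\mapsto\lambda t$ on the source and $y\mapsto\lambda^{d/n}y$ on the target both multiply masses by $\lambda^{d}$ and hence intertwine the transport problem, so by uniqueness of the Brenier map the solution is equivariant: $\nabla u(\lambda t)=\lambda^{d/n}\nabla u(t)$, i.e.\ $u(\lambda t)=\lambda^{(n+d)/n}u(t)$. Alternatively one builds this in from the start with the ansatz $u(t)=(\sum_i t_i)^{(n+d)/n}\phi\bigl(t/\sum_i t_i\bigr)$ and solves the reduced Monge--Amp\`ere equation for $\phi$ on the standard $(d-1)$-simplex; this is what specializes, for $d=1$, to the ODE $u''(u')^{n-1}=\mathrm{const}$ and its explicit solution.

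\textbf{Expected main obstacle.} The crux is the degeneracy at $\partial\mathbf{R}^d_{\geq 0}$: the transported density $(\sum_i y_i)^{n-d}$ vanishes to order $n-d$ along $\{\sum_i y_i=0\}$, the domains are unbounded, and the source mass is infinite, so neither Brenier's theorem nor standard boundary regularity applies off the shelf. Controlling this --- via the homogeneity reduction to a still-degenerate problem on the compact simplex, explicit barriers along the boundary faces, and a careful exhaustion/limiting argument to recover $C^{1,\alpha}$ up to the boundary --- is exactly where the optimal-transport analysis of \cite{CTY24} does the real work.
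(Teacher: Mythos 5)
The statement you are proving is not proved in the paper at all: it is imported verbatim from \cite{CTY24} (Corollary 1.1, Theorem 1.2, Proposition 5.1), and the surrounding text explicitly attributes the construction to the optimal-transport analysis of that reference. So the only meaningful comparison is between your sketch and the cited source, and on that score you have correctly identified the strategy (Legendre duality turning the gradient-weighted equation into a second boundary value problem with target density $(\sum_i y_i)^{n-d}$, plus a homogeneity reduction). Your Step 1 computation $\det D^2v(y)=c^{-1}(\sum_i y_i)^{n-d}$ is correct.

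As a proof, however, the proposal has genuine gaps beyond what you flag yourself. First, your parenthetical identification of the gradient image --- $\Omega^{*}=\mathbf{R}^d_{>0}$ with $\nabla u$ collapsing $\partial\mathbf{R}^d_{\geq 0}$ to the origin --- is wrong for $d\geq 2$: the boundary condition only forces $\sum_i\partial_i u=0$, i.e.\ $\nabla u(\partial\mathbf{R}^d_{\geq 0})\subseteq\{\sum_i y_i=0\}$, not $\nabla u=0$ there; individual $\partial_i u$ need not vanish, and indeed the paper's Step 4 (Cases 3 and 4) relies precisely on the boundary subgradients forming positive-dimensional polytopes inside that hyperplane. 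Determining $\Omega^{*}$ is part of what must be proved, not an input. Second, Step 2 names Brenier, Alexandrov exhaustion, Caffarelli interior regularity and Caffarelli--Urbas boundary regularity, but none of these applies off the shelf: the measures have infinite total mass, the domains are unbounded cones, and the target density degenerates along $\{\sum_i y_i=0\}$, so every regularity hypothesis (densities pinched between positive constants, bounded convex domains) fails. Third, the homogeneity argument in Step 3 invokes uniqueness of the Brenier map to get equivariance, but uniqueness of optimal maps between infinite measures is itself not established; \cite{CTY24} instead builds homogeneity in from the start via the ansatz you mention as an alternative. In short, your outline reconstructs the roadmap of the cited proof but, as you concede in your final paragraph, the analytic content lives entirely in the reference; within this paper the correct ``proof'' is the citation.
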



\section{Interlude I: Berkovich spaces}\label{sec:berkovich-space}
Berkovich spaces are analogs of complex-analytic spaces over any complete valued field $(k, |\cdot|)$. In particular, they carry a structure sheaf and have a well-developed pluripotential theory. We will only consider Berkovich spaces arising as analytifications of complex algebraic varieties $X$, which we use to capture the degeneration of $X$. For this, we analytify $X$ over the trivially valued field $(\C, |\cdot|_0)$, where $|\cdot|_0$ is the trivial valuation defined by $|0|_0 = 0$ and $|a|_0 = 1$ for any $a \neq 0$. This valued field is non-Archimedean and complete. 

If $X = \spec(A)$, an affine scheme of finite type over $\C$, its Berkovich analytification $\spec(A)^\na$ over $(\C, |\cdot|_0)$ consists of all semivaluations on $A$ extending $|\cdot|_0$, and is equipped with the weakest topology such that evaluation on any $f \in A$ is continuous. For a general complex variety, the analytification is obtained by gluing these affine pieces. Moreover, if $Y$ is projective with a closed subscheme $D$, then there is a natural inclusion $D^\na \subseteq Y^\na$. Letting $U = Y \backslash D$, we have $U^\na = Y^\na \backslash D^\na$. 

\begin{remark}
    In fact, if we let instead $k = \mathbf{C}$ with the usual Euclidean norm (albeit not non-Archimedean), then the same construction gives the usual complex analytification. 
\end{remark}

\subsection{Tropical spectrum over a trivially valued field}\label{sec:trop-spec}\hfill

Let $Y$ be a smooth complex projective variety, and as before fix the base field of analytification to be $(\mathbf{C}, |\cdot|_0)$. In this setting, the Berkovich analytification $Y^\na$ admits an equivalent but global description, developed in \cite{JM12}, \cite{BJ22} and \cite{MP24}.

Let $\mathfrak{I}_Y$ be the collection of coherent ideal sheaves on $Y$, which is a semiring. The tropical spectrum $\mathrm{TSpec}(\mathfrak{I}_Y)$ is the set of all functions \[\chi\colon \mathfrak{I}_Y \to [0, \infty]\]
satisfying \[\chi(\mathcal{O}_Y) = 0 , \chi(0) = \infty,\] and moreover for any two ideals $\mathfrak{a}, \mathfrak{b} \in \mathfrak{I}_Y$, we have
\begin{align}\begin{split}\label{trop-spec}
    \chi(\mathfrak{a} \cdot \mathfrak{b}) = \chi(\mathfrak{a}) + \chi(\mathfrak{b}) &, \space \chi(\mathfrak{a} + \mathfrak{b}) = \min\{\chi(\mathfrak{a}), \chi(\mathfrak{b})\}.
\end{split}\end{align}

There is a partial ordering on $\mathrm{TSpec}(\mathfrak{I}_Y)$, given by \[\chi \leq \nu \iff \chi(\mathfrak{a}) \leq \nu(\mathfrak{a}) \text{ for all } \mathfrak{a} \in \mathfrak{I}_Y.\]

We equip $\mathrm{TSpec}(\mathfrak{I}_Y)$ with the weakest topology such that evaluation on any $\mathfrak{a} \in \mathfrak{I}_Y$ is continuous. This topology is called the \emph{Berkovich topology}. As shown in \cite[\S 1]{MP24}, if we interpret functions $\chi$ as (semi)valuations on $Y$, then there is a natural homeomorphism $i\colon \mathrm{TSpec}(\mathfrak{I}_Y) \xrightarrow[]{} Y^\na$. So hereafter we will use them interchangeably. Moreover, if $D$ is a subscheme with associated ideal sheaf $\mathcal{I}_D$, we often write $\chi(D)$ to denote $\chi(\mathcal{I}_D)$. 

Let $Y^{\mathrm{val}}$ denote the set of all valuations in $Y^\na$. This set $Y^{\mathrm{val}}$ is birationally invariant, because any $\chi \in Y^\val$ induces a valuation on the function field $\C(Y)$, which is birationally invariant. Using the tropical spectrum description, we can characterize 
\[Y^{\mathrm{val}} = \{\chi \in Y^\na: \chi(\mathfrak{a}) = \infty \text{ if and only if } \mathfrak{a} = 0\}.\]

Now let $Z \subseteq Y$ be a closed subvariety and $U \subseteq Y$ an open subvariety. Then the subspaces $Z^\na, U^\na \subseteq Y^\na$ can be characterized by 
\begin{align*}
    Z^\na & = \{\chi \in Y^\na: \chi(\mathfrak{a}) = \infty \text{ for some } \mathfrak{a} \text{ cosupported on } Z\},\\
    U^\na &= \{\chi \in Y^\na: \chi(\mathfrak{a}) < \infty \text{ for all } \mathfrak{a} \text{ cosupported on } X \backslash U\}.
\end{align*}
It is thus clear that, if $U = Y \backslash Z$, then $U^\na = Y^\na \backslash Z^\na$. 

\begin{remark}[Center]\label{rem:center}
    Because $Y$ is projective, any function $\chi \in Y^\na$ admits a unique \emph{center} $c_{Y}(\chi) \in Y$, defined as the sum of all ideals on which $\chi$ is positive, which is prime by \cite[Lemma 2.1.1]{MP24}. When $\chi \in Y^\na$ is a valuation, this coincides with the usual center in the valuative criterion of properness. The center map is functorial, that is, if $f: X \to Y$ is a map, then $c_Y \circ f^\na = f \circ c_X$ where $f^\na: X^\na \to Y^\na$ is the induced morphism. 
\end{remark}

\begin{remark}[Pullback]\label{rmk:pullback-of-val}
Let $f\colon Y \to Z$ be a map of complex varieties. The induced map $f^\na\colon Y^\na \to Z^\na$ can be described as follows. Recall that for any $\mathfrak{a} \in \mathfrak{I}_Z$, there is an inverse image ideal sheaf $\mathfrak{a}' \coloneqq f^{-1} \mathfrak{a} \cdot \mathcal{O}_Y \in \mathfrak{I}_Y$. So for any $\chi \in Y^\na$, we define
\begin{equation*}f^\na(\chi)(\mathfrak{a}) \coloneqq \chi(\mathfrak{a}') \text{ for any }\mathfrak{a} \in \mathfrak{I}_Z.\end{equation*}
\end{remark}

\begin{remark}[Topologies and the structure sheaf]
    There is also a Zariski topology on $Y^\na$, defined by setting $U^\na \subseteq Y^\na$ as open for any $U \subseteq Y$ that is Zariski open. The Zariski topology is weaker than the Berkovich topology. Unless stated otherwise, when we say a subset is open in $Y^\na$, we mean open in the Berkovich topology. 
    
    In addition, the Berkovich space $Y^\na$ carries a coherent structure sheaf $\mathcal{O}_{Y^\na}$ in the Berkovich topology. In particular, we can evaluate $\mathcal{O}_{Y^\na}$ on $U^\na$ for any Zariski open $U \subseteq Y$. 
\end{remark}

\subsection{Quasimonomial valuations}\label{sec:qm}\hfill

We still analytify over $(\C, |\cdot|_0)$ and let $Y$ be a smooth complex projective variety with a simple normal crossing divisor $D$. We define quasimonomial valuations, a natural class of birationally invariant points in $Y^\na$, in a nice way using the tropical spectrum description.

\begin{definition}\label{def:qm}
    Say the irreducible components of $D$ are $D_1, \cdots, D_k$. An intersection stratum of $D$ is an irreducible component of the intersection of some of the $D_i$'s. Let $\eta$ be the generic point of some intersection stratum of $D$. For any $k$-tuple $a = (a_1, \cdots, a_k) \in \mathbf{R}^k_{\geq 0}$, there is a unique minimal element $\chi_{a, \xi} \in \mathrm{TSpec}(\mathfrak{I}_Y)$ that is centered at another generic point $\xi$ of another intersection stratum of $D$, with $\eta \in \bar{\xi}$, and satisfies \begin{equation}\label{eq:qm-val}\chi_{a,\xi}(D_i) = a_i \text{ for all } 1 \leq i \leq k.\end{equation}

    This is a valuation, as opposed to a semivaluation. We denote by $\QM_\eta (Y,D)$ the set of all such valuations. If $\eta$ is the generic point of an intersection stratum $Z \subseteq D$, then we also use the notation $\QM_Z (Y, D)$ to denote $\QM_\eta(Y,D)$. 

    The quasimonomial valuations on $D$ are \[\QM(Y, D) \coloneqq \bigcup_{Z} \QM_Z(Y,D),\] where the union runs through all intersection strata $Z$ of $D$. By the identification of $Y^\na$ with $\mathrm{TSpec}(\mathfrak{I}_Y)$, we will consider $\QM_Z(Y,D)$ and $\QM(Y,D)$ as subsets of $Y^\na$. 
\end{definition}

\begin{remark}\label{rmk:equiv-def-qm}
    This definition of quasimonomial valuations is equivalent to the one in \cite[\S 1.2]{JM12}, which utilizes Cohen’s structure theorem. 
\end{remark}

Let $U \coloneqq Y \backslash D$. Because $U$ is birational to $Y$, we have $U^{\mathrm{val}} = Y^{\mathrm{val}}$. It follows that 

\begin{lemma}\label{lem:qm-in-Uan}
    The elements in $\QM(Y,D)$ is contained in $U^{\mathrm{val}} \subseteq U^\na = Y^\na \backslash D^\na$. 
\end{lemma}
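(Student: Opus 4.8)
The plan is to trace through the ideal-theoretic descriptions of the relevant subsets of $Y^\na$ recorded in \S\ref{sec:trop-spec} and \S\ref{sec:qm}; there is essentially no computation involved. The single substantive input is Definition \ref{def:qm}: by construction every $\chi \in \QM(Y,D)$ is a \emph{valuation}, not merely a semivaluation. In the tropical-spectrum language this means precisely that $\chi(\mathfrak{a}) = \infty$ can hold only when $\mathfrak{a} = 0$, i.e.\ that $\chi$ lies in $Y^{\val}$. So the first step is simply to record the inclusion $\QM(Y,D) \subseteq Y^{\val}$. (If one prefers to argue this rather than quote it, it is exactly the content of Remark \ref{rmk:equiv-def-qm}, which identifies the definition with the classical one of \cite{JM12}, under which quasimonomial points are genuine monomial valuations of the function field $\C(Y)$, hence finite on every nonzero ideal.)

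Next I would invoke birational invariance: since $U \coloneqq Y \setminus D$ is a dense open subvariety, it is birational to $Y$, so $U^{\val} = Y^{\val}$ as recorded just before the lemma; combined with the first step this gives $\QM(Y,D) \subseteq U^{\val}$. The inclusion $U^{\val} \subseteq U^\na$ is immediate from the definition of $U^{\val}$ as the set of valuations lying in $U^\na$, and the identification $U^\na = Y^\na \setminus D^\na$ is the special case $Z = D$ of the general fact proved in \S\ref{sec:trop-spec} from the characterizations of $Z^\na$ and $U^\na$ in terms of ideals cosupported on $Z$. Chaining these inclusions yields the lemma.

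I do not anticipate any real obstacle. The only thing to watch is that the correct characterizations are used, namely that $Y^{\val} = \{\chi \in Y^\na : \chi(\mathfrak{a}) = \infty \iff \mathfrak{a} = 0\}$ and that $U^\na = \{\chi \in Y^\na : \chi(\mathfrak{a}) < \infty \text{ for every } \mathfrak{a} \text{ cosupported on } D\}$, together with the small but essential remark that $D \subsetneq Y$, so that any ideal cosupported on $D$ is nonzero. This last point also gives a one-line shortcut that bypasses birational invariance altogether: a valuation is finite on every nonzero ideal, hence on every ideal cosupported on the proper closed subset $D$, so $Y^{\val} \subseteq U^\na$ directly and therefore $\QM(Y,D) \subseteq Y^{\val} \subseteq U^\na = Y^\na \setminus D^\na$.
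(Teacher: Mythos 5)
Your proposal is correct and follows essentially the same route as the paper, which treats the lemma as an immediate consequence of the facts recorded just before it: quasimonomial points are valuations by Definition \ref{def:qm}, $U^{\val}=Y^{\val}$ by birational invariance, and $U^\na = Y^\na\setminus D^\na$ from the ideal-theoretic characterizations in \S\ref{sec:trop-spec}. Your one-line shortcut (a valuation is finite on every nonzero ideal, in particular on ideals cosupported on the proper closed subset $D$) is also valid and arguably cleaner, but it is only a minor variation on the same argument.
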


Because $D$ is an SNC divisor, one can associate to it a dual complex $\Delta(D)$ whose faces are, in an order reversing manner, in one-to-one correspondence with the intersection strata of $D$. As explained in \cite[\S 4.2]{JM12}, the set $\QM(Y, D)$ can be naturally realized as a cone over $\Delta(D)$. We now give two examples. 

\begin{example}(\emph{cf.} \cite[2.2.4.]{CLD25})\label{eg:qm-for-torus}
    Let $Y = \mathbf{P}^n$ with homogeneous coordinates $z_0, \cdots, z_n$, and $D_i = \{z_i = 0\}, D = D_0 + \cdots + D_n$. 
    The deepest strata in $D$ are the points $p_i$ with $1$ at the $i$-th coordinate and $0$ at elsewhere. The set $\QM_{p_i}(\mathbf{P}^n, D)$ of quasimonomial valuations is isomorphic to $\R^n_{\geq 0}$ via evaluation at the ideal sheaves associated to $D_0, \cdots, D_{i-1}, D_{i+1}, \cdots, D_n$. Alternatively, observe that the dual complex $\Delta(D)$ is the boundary of the standard $n$-simplex, and $\QM(\mathbf{P}^n, D)$ its cone. The maximal dimensional faces of $\QM(\mathbf{P}^n, D)$ are precisely $\QM_{p_i}(\mathbf{P}^n, D)$. 
\end{example}

\begin{example}
    Let us specialize to Setup \S \ref{setup} by taking $Y = \bar{X}$ and $D = D_1 + \cdots + D_d$ the anticanonical divisor defined therein. The dual complex of $D$ is the standard $(d-1)$-simplex. In particular, it has exactly one maximal face, corresponding to $Z = D_1 \cap \cdots \cap D_d$. One can check \[\QM(\bar{X}, D) = \QM_Z (\bar{X}, D) \simeq \mathbf{R}_{\geq 0}^d.\] The homeomorphism is given by evaluation at the ideal sheaves associated to $D_1, \cdots, D_d$.
\end{example}

\subsection{Essential skeletons}\label{sec:essential-ske}\hfill

Introduced in \cite{KS06}, the essential skeleton is a piecewise affine subset of the Berkovich analytification of a Calabi–Yau variety over $\C(\!(t)\!)$. It can be characterized as the minimizing locus of either the weight function defined in \cite{MN15} or Temkin's metric on the canonical bundle \cite{Tem16}. These constructions apply to a Calabi–Yau variety $X$ over $\mathbf{C}$ as well. In this setting, the weight function and Temkin's metric coincide by \cite{MMS24}, and the essential skeleton is defined as their common minimal locus. When $X$ is projective, the essential skeleton is a singleton corresponding to the trivial valuation, whereas it is more interesting when $X$ is affine. In this paper, we consider such $X$ arising from smooth log Calabi--Yau pairs. 

\begin{convention}
    We say $(\bar{X}, D)$ is a smooth log Calabi--Yau pair if $\bar{X}$ is a smooth projective Fano variety, and $D$ is a reduced SNC divisor such that $K_{\bar{X}} + D = 0$. 
\end{convention}

Let us fix a smooth log Calabi--Yau pair $(\bar{X}, D)$ from now on, and set $X \coloneqq \bar{X} \backslash D$. Observe that $X$ has a regular non-vanishing $n$-form $\Omega$ with a simple pole along $D$, which is unique up to scaling. In other words, $X$ is a quasi-projective Calabi--Yau variety. We now describe the definition of the essential skeleton $\sk(X) \coloneqq \sk(\bar{X}, D) \subseteq X^\na$. 

The Berkovich analytification $X^\na$ is isomorphic to $\bar{X}^\na \backslash D^\na$, and is independent of the compactification $\bar{X}$. By \cite{MMS24}, the weight function on $X^\na$ is given by $A_{(\bar{X},D)}(\chi) \coloneqq A_{\bar{X}}(\chi) - \chi(D)$, where $A_{\bar{X}}$ is the log discrepancy function extended to the whole Berkovich space $X^\na$ as in \cite[Appendix A]{BJ23}. The minimizing locus of $A_{(\bar{X},D)}$ is precisely the set of quasimonomial valuations $\QM(\bar{X}, D)$; see \cite[Proposition 3.2.5]{Blu18} for a proof. Therefore, \[\mathrm{Sk}(X) = \QM(\bar{X}, D),\] which lies in $X^\na$ by Lemma~\ref{lem:qm-in-Uan}.  

\begin{example}\label{eg:ske-torus}
    We consider a more concrete example which will be used later. Let $\bar{X} = \mathbf{P}^n$ with homogeneous coordinates $z_0, \cdots, z_n$, and $D = \{z_0 \cdots z_n = 0\}$. Then $\bar{X} \backslash D$ is isomorphic to the split torus $\mathbf{G}^n_m$. It comes with a non-vanishing volume form $\tfrac{dz_1}{z_1} \wedge \cdots \wedge \tfrac{dz_n}{z_n}$ and is thus Calabi--Yau. Its essential skeleton is $\mathrm{Sk}(\mathbf{G}^n_m) = \QM(\bar{X}, D) \subseteq (\mathbf{G}^n_m)^\na$, already studied in Example~\ref{eg:qm-for-torus} and shown there to be isomorphic to $\mathbf{R}^n_{\geq 0}$.
\end{example}

\section{Interlude II: The Monge--Ampère operator on Berkovich spaces}\label{sec:pp-on-berk-spaces}
For later use, we review the construction of the Monge--Ampère operator on Berkovich spaces, as developed in \cite{CLD25}. Throughout this section, $Y$ denotes a smooth connected complex variety of dimension $n$, not necessarily projective, and $Y^\na$ its Berkovich analytification over $(\C, |\cdot|_0)$. 

\subsection{Moment maps and tropical charts}\label{sec:moment-tropical}\hfill

One of the central ideas of \cite{CLD25} is to regard certain maps to Euclidean spaces as tropical charts of $Y^\na$. Indeed, Lagerberg \cite{Lag12} equipped Euclidean spaces with a pluripotential theory that shares many features with the classical complex one, thereby allowing a parallel theory to be developed on $Y^\na$ through tropical charts. 

\begin{convention}
    We use $\mathbf{T} = \mathbf{T}^k$ to denote a torus isomorphic to $\mathbf{G}_m^k = \spec(\mathbf{C}[T_1^\pm, \cdots, T_k^\pm])$ for some $k > 1$, but without fixing an isomorphism. 
        
    In practice, we will use $\mathbf{T}$ of the form $\mathbf{G}^{k+1}_m / \mathbf{G}_m$, where $\mathbf{G}_m$ acts coordinate-wise. This is natural because we will embed $Y$ into some projective space $\mathbf{P}^k$ by sections of a line bundle, and $\mathbf{G}^{k+1}_m / \mathbf{G}_m$ is the complement of coordinate hyperplanes in $\mathbf{P}^k$. 
\end{convention}

\begin{definition}\label{def:tropicalization}
    Let $U$ be open in $Y^\na$. A morphism of Berkovich spaces $\tau\colon U \to \mathbf{T}^\na$ is called a moment map. 
\end{definition}

Let $M$ denote the group of characters $\mathbf{T} \to \mathbf{G}_m$, and let $N \coloneqq \mathrm{Hom}(M, \mathbf{Z})$. Then the vector space $N_{\mathbf{R}}(\mathbf{T}) \coloneqq N \otimes_{\mathbf{Z}} \mathbf{R} = \mathrm{Hom}(M, \mathbf{R})$ has a natural integral affine structure. Any semivaluation $\nu \in \mathbf{T}^\na$ defines an element in $N_{\mathbf{R}}(\mathbf{T})$ by sending $f \in M$ to $\nu(f)$, which is finite because $f \in M$ is invertible. This assignment does not depend on the coordinates on $\mathbf{T}$; it is called the \emph{tropicalization map} and is denoted as $\trop \colon \mathbf{T} \to N_{\mathbf{R}}(\mathbf{T}).$ 

\begin{example}
    If $\mathbf{T} = \mathbf{G}_m^k$ with affine coordinates $T_1, \cdots, T_k$ on $\mathbf{T}$, then $N_{\R}(\mathbf{T})$ is canonically isomorphic to $\mathbf{R}^k$ by identifying $\alpha \in N_{\mathbf{R}}(\mathbf{T})$ with $(\alpha(T_1), \cdots, \alpha(T_k)) \in \mathbf{R}^k$. 
\end{example}

\begin{example}\label{eg:alt-trop}
If $\mathbf{T} = \mathbf{G}^{k+1}_m / \mathbf{G}_m$ with homogeneous coordinates $z_0, \cdots, z_k$, then $N_{\mathbf{R}}(\mathbf{T})$ is canonically isomorphic to $\mathbf{R}^{k+1}/ \mathbf{R}$ by identifying $\alpha \in N_{\mathbf{R}}(\mathbf{T})$ with $(\alpha(z_0), \cdots, \alpha(z_n))$. Here $\mathbf{R}$ acts additively on $\mathbf{R}^{k+1}$.  
\end{example}

With the tropicalization map, we can define tropical charts. Again, let $U$ be open in $Y^\na$, and let $\tau\colon U \to \mathbf{T}^\na$ be a moment map. 

\begin{definition}
    The composition $\tau^\trop \coloneqq \trop \circ \tau\colon U \to \NRT$ is a tropical chart of $Y^\na$.
\end{definition}

\subsection{Functions and metrics}\hfill

Let $U$ be open in $Y^\na$, and let $\tau\colon U \to \NRT$ be a tropical chart. Additionally, let $L$ be a line bundle on $Y$. This induces a line bundle $L^\na$ on $Y^\na$. We also use $L^\na$ to denote its total space, which coincides with the Berkovich analytification of the total space of $L$.

\begin{definition}\label{def:class-of-functions}
Let $g \colon \NRT\to \mathbf{R}$ be any function. If $g$ is smooth or convex, then $(\tau^\trop)^* g$ is a smooth or plurisubharmonic tropical function on $U$, respectively. If a function $\psi$ on $U$ can be written as $(\tau^\trop)^* g$ for some $g$, we say $\psi$ is tropicalizable. 
\end{definition}

As developed in \cite{CLD25}, there is a sheaf $\mathrm{CPSH}$ of continuous plurisubharmonic functions, which is closed under locally uniform limits. In particular, $\mathrm{CPSH}(U)$ contains all plurisubharmonic tropical functions for $U \subseteq Y^\na$ open.

\begin{example}(\emph{cf.}\cite[Corollarie 8.2.4]{CLD25})\label{eg:comp-of-fn-is-psh}
    Let $U$ be open in $Y^\na$, and let $f_1, \cdots,f_k \in \mathcal{O}_{Y^\na}(U)^{\times}$ be invertible regular functions. Let $g \colon \mathbf{R}^k \to \mathbf{R}$ be a convex function. Then the function $g(\log|f_1|, \cdots, \log|f_k|)$ is in $\mathrm{CPSH}(U).$
\end{example}

\begin{example}(\emph{cf.} \cite[Corollaire 8.2.3]{CLD25})\label{eg:max-of-psh}
    Let $U$ be open in $Y^\na$, and let $f_1, \cdots, f_k$ be any finite collection of functions in $\mathrm{CPSH}(U)$. Then $\max(f_1, \cdots, f_k)$ is in $\mathrm{CPSH}(U)$ as well. 
\end{example}

We now move on to metrics on the line bundle $L^\na$. Let $(L^\na)^\times$ denote the total space of $L^\na$ with the zero section removed. We also use the notation $\overline{\mathbf{R}} \coloneqq \mathbf{R} \cup \{-\infty\}$, where $-\infty$ has a basis of open neighborhoods given by the intervals $[-\infty, a)$ for $a \in \mathbf{R}$. 

\begin{definition}
    A continuous metric on $L^\na$ is a continuous function $\phi\colon (L^\na)^\times \to \overline{\mathbf{R}}$ whose restriction to each fiber is a norm.\footnote{We refer to \cite[\S 17.1]{CLD25} for the exact meaning of this; see also \S 2.2 of the first arXiv version of \cite{BJ22}.} A continuous metric $\phi$ is plurisubharmonic if, for any $U \subseteq Y$ Zariski open and any invertible section $s \in H^0(U, L)$, the function $\phi \circ s\colon U^\na \to \overline{\mathbf{R}}$ is in $\mathrm{CPSH}(U^\na)$. It is singular if it takes the value $-\infty$, and non-singular otherwise. 
\end{definition}

If $\phi_i$ is a metric on $L_i$ for $i = 1, 2$, then $\phi_1 + \phi_2$ is a metric on $L_1 + L_2$. Similarly, if $\phi$ is a metric on $L$, then it induces a metric on $eL$ for any $e \in \mathbf{N}$. In addition, a metric on the trivial line bundle $\mathcal{O}_{Y^\na}$ is simply a function on $Y^\na$. 

\begin{example}
    Let $s' \in H^0(Y, L)$ be a section. Then $\phi \coloneqq \log|s'|$ is a metric given by $\phi \circ s = \log|s' / s|$ for any non-vanishing local section $s$ of $L$. It is the unique metric satisfying $\phi \circ s' = 0$. If $s'$ is non-vanishing, then $\phi$ is non-singular. 
\end{example}

\begin{definition}[Fubini--Study metrics and the trivial metric]\label{def:triv-metric-over-triv-valued-field}\hfill

    Let $k \in \mathbf{Z}$ and pick global sections $s_0', \cdots, s_m' \in H^0(X, kL)$ without common zeros. Let $\phi$ be the metric on $L^\na$ given by \[\phi \coloneqq k^{-1} \max_{0 \leq j \leq m} (\log|s_j'|).\] 
    It is a continuous plurisubharmonic metric on $L^\na$ by Proposition~\ref{prop:fs-psh} below. 
    
    If $Y$ is projective and $U = Y$, the Fubini--Study metric in fact does not depend on the choice of global sections $s_0', \cdots, s_m'$ and is \emph{the trivial metric} $\phi_{\mathrm{triv}}$, which can be characterized as follows. For any $\nu \in Y^\na$, we have $(\phi_{\mathrm{triv}} \circ s)(\nu) = 1$ for any non-vanishing section $s$ over any Zariski open neighborhood of the center of $\nu$ in $Y$. If $U \subseteq Y^\na$ is open, we also use $\phi_{\mathrm{triv}}$ to denote the restriction of the trivial metric on $L^\na$ to $U$. 
    The notation \[\nu(s) \coloneqq (\phi_{\mathrm{triv}} - \log|s|)(\nu) \in [0, \infty]\] is often used in the literature. \end{definition}

\begin{proposition}\label{prop:fs-psh}
    Any Fubini--Study metric $\phi$ is a continuous plurisubharmonic metric on $L^\na$.
\end{proposition}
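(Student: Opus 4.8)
The plan is to work through the definitions in \cite{CLD25} and reduce the claim to the two stability results already recorded above, namely Example~\ref{eg:comp-of-fn-is-psh} (pullbacks of convex functions along $\log|\cdot|$ of invertible functions are $\mathrm{CPSH}$) and Example~\ref{eg:max-of-psh} (finite maxima of $\mathrm{CPSH}$ functions are $\mathrm{CPSH}$). Fix $k \in \mathbf{Z}$, global sections $s_0', \dots, s_m' \in H^0(Y, kL)$ with no common zero, and set $\phi = k^{-1}\max_j \log|s_j'|$. First I would check that $\phi$ is a genuine continuous metric on $L^\na$: on any Zariski open $V \subseteq Y$ with a non-vanishing section $s$ of $L$, we have $\phi \circ s = k^{-1}\max_j \log|s_j'/s^{\otimes k}|$, where each $s_j'/s^{\otimes k}$ is a regular function on $V$; continuity follows since each $\log|\cdot|$ is continuous on $Y^\na$ and a finite max of continuous functions is continuous, and the fiberwise-norm property is immediate from the defining formula (scaling by $a$ in the fiber adds $\log|a|$ to $\phi$). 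The no-common-zero hypothesis guarantees $\phi$ never equals $-\infty$, so in fact $\phi$ is non-singular, though this is not needed.

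The substance is the plurisubharmonicity. By definition I must show that for every Zariski open $U \subseteq Y$ and every invertible section $s \in H^0(U, L)$, the function $\phi \circ s : U^\na \to \overline{\mathbf{R}}$ lies in $\mathrm{CPSH}(U^\na)$. Writing $f_j \coloneqq s_j'/s^{\otimes k} \in \mathcal{O}_Y(U)$, we have $\phi \circ s = k^{-1}\max_j \log|f_j|$. The functions $f_j$ need not be invertible on all of $U$, so I cannot apply Example~\ref{eg:comp-of-fn-is-psh} directly. Instead I would argue locally on $U^\na$: around any point $\nu \in U^\na$, at least one $f_{j_0}$ is invertible in a Zariski (hence Berkovich) open neighborhood $U' \ni c_Y(\nu)$ — this is exactly the no-common-zero condition, which says the $s_j'$ generate $kL$ — and on $U'^{\na}$ we may rewrite
\[
\phi \circ s = k^{-1}\bigl(\log|f_{j_0}| + \max_j \log|f_j/f_{j_0}|\bigr),
\]
where now each $f_j/f_{j_0}$ is invertible on $U'$. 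The single term $k^{-1}\log|f_{j_0}|$ is $\mathrm{CPSH}$ (it is the pullback of a linear function as in Example~\ref{eg:comp-of-fn-is-psh}, or directly $\pm$ a regular/invertible log-norm), and $k^{-1}\max_j \log|f_j/f_{j_0}|$ is $\mathrm{CPSH}$ by applying Example~\ref{eg:comp-of-fn-is-psh} with the convex function $g(x_0,\dots,x_m) = k^{-1}\max_j x_j$ on $\mathbf{R}^{m+1}$ to the invertible functions $f_j/f_{j_0}$. Since $\mathrm{CPSH}$ is closed under sums and is a sheaf, $\phi \circ s \in \mathrm{CPSH}(U'^\na)$; covering $U^\na$ by such $U'^\na$ and using the sheaf property gives $\phi \circ s \in \mathrm{CPSH}(U^\na)$, as required. (Alternatively, one may simply invoke Example~\ref{eg:max-of-psh} once each $\log|s_j'|$ is known to be a $\mathrm{CPSH}$ metric, but the $s_j'$ may vanish, so one still needs the local trick to make sense of $\log|s_j' / s|$ as a genuine $\mathrm{CPSH}$ function rather than a $[-\infty,\infty)$-valued one.)

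I expect the main subtlety — not a deep obstacle, but the point requiring care — to be precisely this handling of the common zero locus of the $s_j'$: $\phi \circ s$ is a priori only $\overline{\mathbf{R}}$-valued, and one must verify it actually defines a plurisubharmonic \emph{metric} in the precise sense of \cite[\S 17.1]{CLD25}, i.e. that the potential $\log|f_j|$ terms interact correctly with the $\max$ so that no spurious $-\infty$ appears and the sheaf-theoretic $\mathrm{CPSH}$ condition is met chart-by-chart. Everything else — continuity, the norm property, and the $\mathrm{CPSH}$ membership on each chart where one section is invertible — follows mechanically from the cited corollaries of \cite{CLD25} and the fact that $\mathrm{CPSH}$ is a sheaf closed under finite sums, finite maxima, and locally uniform limits.
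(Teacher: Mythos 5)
Your overall plan (reduce to Examples~\ref{eg:comp-of-fn-is-psh} and~\ref{eg:max-of-psh} via a suitable open cover and the sheaf property of $\mathrm{CPSH}$) is the right one, and your preliminary discussion of continuity and the fiberwise norm property is fine. But the key step has a genuine gap: after localizing to a Zariski open $U'$ on which some $f_{j_0}=s_{j_0}'/s^{\otimes k}$ is invertible, you assert that ``each $f_j/f_{j_0}$ is invertible on $U'$'' and then apply Example~\ref{eg:comp-of-fn-is-psh} to these functions. That assertion is false: the no-common-zero hypothesis only guarantees that \emph{one} section is non-vanishing near each point, and the remaining $f_j/f_{j_0}$ ($j\neq j_0$) are regular on $U'$ but may very well vanish there, so $\log|f_j/f_{j_0}|$ is still only $[-\infty,\infty)$-valued and Example~\ref{eg:comp-of-fn-is-psh} (which explicitly requires $f_i\in\mathcal{O}_{Y^{\na}}(U)^{\times}$) does not apply. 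You correctly diagnose in your final parenthetical that the vanishing of the $s_j'$ is the crux, but your ``local trick'' does not actually remove it; it only removes the vanishing of the single section $s_{j_0}'$.

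The paper's proof resolves exactly this point by localizing in the \emph{Berkovich} topology rather than the Zariski topology: for each nonempty $J\subseteq[m]$ it sets $U_J=\bigcap_{j\in J}\{\nu(s_j')<2\}\cap\bigcap_{j\notin J}\{\nu(s_j')>1\}$. Since $\min_j\nu(s_j')=0$ everywhere, these cover $Y^{\na}$; on $U_J$ the maximum defining $\phi\circ s$ is attained among $j\in J$ (so one may write $\phi\circ s=k^{-1}\max_{j\in J}\log|s_j'/s|$), and for $j\in J$ the condition $\nu(s_j')<2$ makes $s_j'/s$ an invertible \emph{analytic} function on the Berkovich open $U_J$, so the cited stability results apply legitimately. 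To repair your argument you would need either this Berkovich-open localization (or some other input from \cite{CLD25}, e.g.\ that $\log|f|$ is plurisubharmonic for arbitrary regular $f$ together with a maximum principle for not-necessarily-finite psh functions), neither of which is available from the two Examples alone as you have set things up.
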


\begin{proof}
    Because $\mathrm{CPSH}$ is a sheaf, it suffices to find an open cover $\{U_J\}_J$ of $Y^\na$ such that, for any local section $s$ of $L^\na$ defined on an open $U \subseteq Y^\na$, the function \[\phi \circ s = k^{-1} \max_{0 \leq j \leq m} (\log|s_j'/s|)\] is in $\mathrm{CPSH}(U \cap U_J)$ for all $J$. 
    
    For any non-empty subset $J \subseteq [m]$, define an open subset of $Y^\na$, \[U_J \coloneqq \bigcap_{j \in J} \{\nu(s_j') < 2\} \cap \bigcap_{j \not\in J} \{\nu(s_j') > 1\}.\] Because $\min_{0 \leq j \leq m} \nu(s_j') = 0$ for any $\nu \in Y^\na$, the sets $\{U_J\}_J$ form an open cover of $Y^\na$. For any local section $s$ of $L^\na$ defined over an open $U \subseteq Y^\na$, on $U \cap U_J$ we can write \[\phi \circ s = k^{-1} \max_{j \in J} (\log|s_j'/s|),\] which is in $\mathrm{CPSH}(U \cap U_J)$ by Example~\ref{eg:max-of-psh}. By the sheaf property $\phi \circ s$ is in $\mathrm{CPSH}(U)$, as desired. 
\end{proof}
    
\subsection{Characteristic polyhedron}\label{sec:char-poly}\hfill

As before, $Y$ is a smooth complex variety of dimension $n$. In order to define and compute the non-Archimedean Monge--Ampère operator on $Y^\na$ using tropical charts, one needs to address the subtlety that images of these charts are complicated. In view of this, \cite{CLD25} introduced the notion of a characteristic polyhedron, which is a simplicial subset of $Y^\na$. 

\begin{definition}
    Let $U$ be open in $Y^\na$, with a moment map $\tau\colon U \to \mathbf{T}^\na$. Then $\Sigma_\tau \subseteq U$ is the union of $q^{-1}(\sk(\mathbf{G}^n_m))$ over all algebraic morphisms $q \colon \mathbf{T}^\na \to (\mathbf{G}^n_m)^\na$, where $\sk(\mathbf{G}^n_m) \simeq \mathbf{R}^n$ is the essential skeleton of $(\mathbf{G}^n_m)^\na$ from Example~\ref{eg:ske-torus}.  
\end{definition}

\begin{remark}\label{rem:char-poly-by-proj}
    By \cite[Proposition 12.6.3]{CLD25}, if $\dim \mathbf{T} < n$, then $\Sigma_\tau = \emptyset$. Otherwise, it suffices to consider all morphisms $q$ arising from coordinate projections. Particularly, if $\mathbf{T} = \mathbf{G}^{k+1}_m / \mathbf{G}_m$, then it suffices to use projections $\mathbf{G}^{k+1}_m / \mathbf{G}_m \to \mathbf{G}^I_m / \mathbf{G}_m$ for all subsets $I \subseteq [k+1]$ of size $n+1$. 
\end{remark}

As we will see shortly, the characteristic polyhedron is essentially where integration will take place on $Y^\na$. However, it is a polyhedral complex, and following \cite{CLD25} one needs to decompose it prior to integrating; such a decomposition always exist by Lemme 14.1.5 \textit{loc.\ cit.} We will manually construct one when proving Theorem~\ref{intro:thm-1}. 

\begin{definition}\label{def:cell-decomp}(\emph{cf.} \cite[2.2.4]{CLD25})
    A cellular decomposition $\mathcal{C}$ of $\Sigma_\tau$ consists of a finite collection of closed cells covering $\Sigma_\tau$ such that
    \begin{enumerate}[label=(\alph*), ref=\alph*]
        \item $\tau^\trop$ maps any maximal dimensional cell $C$ homeomorphically onto its image $D$, and $\tau^\trop \colon (\tau^\trop)^{-1}(D^\circ) \to D^\circ$ is a finite covering map; \label{dcp-i}
        \item for any $C, D \in \mathcal{C}$, the intersection $C \cap D$ is a finite union of cells in $\mathcal{C}$; \label{dcp-ii}
        \item for any $C \in \mathcal{C}$, its boundary is a finite union of cells in $\mathcal{C}$; \label{dcp-iii}
        \item for any $C, D \in \mathcal{C}$, if $C \subseteq D$, then either $C = D$ or $C \subseteq \partial D$. \label{dcp-iv}
\end{enumerate}

We use $\mathcal{C}_k$ to denote the set of $k$-dimensional cells in $\mathcal{C}$. 
\end{definition}

\subsection{Non-Archimedean Monge--Amp\`{e}re operator}\label{sec:NAMA}\hfill 

We remain in the setting above, and recall that $\mathrm{CPSH}$ is the sheaf of continuous plurisubharmonic functions on $Y^\na$. By applying a Bedford--Taylor argument, \cite[\S 8.8]{CLD25} constructed a non-Archimedean Monge--Amp\`{e}re operator \[\NAMA\colon \mathrm{CPSH} \to \{\text{positive Radon measure on }Y^\na\},\] which is continuous under locally uniform limits.

The image of a function under this operator is its associated \emph{non-Archimedean Monge--Ampère measure}. These measures do not put mass on $Z^\na \subseteq Y^\na$ for any proper Zariski closed subset $Z \subseteq Y$. Therefore, if $U$ is a non-empty Zariski open subset of $Y$ and $\psi \in \mathrm{CPSH}(Y^\na)$, then 
\begin{equation}\label{cor:no-pluripolar-mass}
    \NAMA(\psi)= \NAMA(\psi)|_{U^\na} = \NAMA(\psi|_{U^\na}) \text{ as measures on $Y^\na$.}
\end{equation}

For certain classes of tropical CPSH functions, their associated measures can be described explicitly as follows. Fix a Zariski open subset $U \subseteq Y$, and pick some $\psi \in \mathrm{CPSH}(U^\na)$ which can be written as $\psi = (\tau^\trop)^* v$ where $\tau\colon U^\na \to \mathbf{T}^\na$ is a moment map and $v\colon \NRT \to \mathbf{R}$ is a smooth convex function. Recall that $\Sigma_\tau$ is the characteristic polyhedron, for which we fix a cellular decomposition $\mathcal{C}$. 

\begin{definition}\label{def:measure-on-each-cell} 
    For each $n$-dimensional cell $C \in \mathcal{C}_n$, let $\sigma = \tau^\trop(C)$ and let $\mathbf{L}_\sigma$ be the $n$-dimensional affine space generated by $\sigma$ in $\NRT$. The space $\mathbf{L}_\sigma$ has a canonical integral affine structure coming from \[N \subseteq N\otimes_{\mathbf{Z}}\mathbf{R} = \NRT.\] Therefore, there is a Lebesgue measure on $\mathbf{L}_\sigma$. 
    
    For simplicity set $v_\sigma \coloneqq v|_{\mathbf{L}_\sigma}$. Note that $\tau^\trop$ restricts to a homeomorphism between $C$ and $\sigma$, so $(\tau^\trop|_C)^{-1}$ is a well-defined function on $\sigma$. We define a measure \[\mu_{\sigma,v} \coloneqq (\tau^\trop|_C^{-1})_* (\mathbf{1}_\sigma \RMA(v_{\sigma}))\] supported on $C$, where $\mathbf{1}_{\sigma}$ is the characteristic function of $\sigma$, and $\RMA(-)$ is the real Monge--Ampère operator on $\mathbf{L}_\sigma$.
    
    The non-Archimedean Monge--Ampère measure associated to $\psi$ is \begin{equation}\label{eq:nama-cont}\NAMA(\psi) \coloneqq \sum_{\sigma} \mu_{\sigma,v},\end{equation} where the sum runs over $\sigma = \tau^\trop(C)$ for all $C \in \mathcal{C}_n$. In particular, its support is contained in $\Sigma_\tau$. This is the starting point taken in \cite[\S 8]{CLD25} for defining the operator $\NAMA(-)$. 
\end{definition}

If we assume that $v$ is only convex, as opposed to being smooth convex, then by continuity of $\NAMA(-)$, for any smooth convex locally uniform approximation $v_k$ of $v$, we have \[\NAMA(\psi) = \lim_{k \to \infty} \sum_{\sigma} \mu_{\sigma, v_k}.\] In fact, because $v_k$ and $v$ are convex, it suffices to assume pointwise convergence. In particular, this does not depends on the chosen approximation, and the support of $\NAMA(\psi)$ is again contained in $\Sigma_\tau$. 

\subsection{Real Monge--Ampère operator and comparison}\label{sec:comparsion}\hfill

It is however desirable to obtain a more explicit formula for $\NAMA(\psi)$ that avoids the use of limits. To this end, we first recall the notion of the Alexandrov Monge--Amp\`{e}re operator. 

\begin{definition}
    Let $\Omega$ be any convex open subset of $\R^n$ and $v\colon \Omega \to \mathbf{R}$ a convex function. The subgradient of $v$ at any point $x \in \Omega$ is the set $\partial v(x) \coloneqq \{p \in \mathbf{R}^n\colon p \cdot (y-x) + v(x) \leq v(y) \text{ for all }y \in \Omega\}.$ The Alexandrov Monge--Amp\`{e}re measure of $v$ is a Borel measure given by \[\RMA(v)(E) \coloneqq |\displaystyle\bigcup_{x \in E}\partial v(x)| \text{ \space \space for any $E \subseteq \Omega$ Borel}.\] A change of variables readily shows that this operator does agree with the classical one on smooth functions. 
\end{definition}

\begin{remark}\label{rem:continuity-of-RMA}
    Suppose $v_k , v$ are convex functions on $\Omega$. If $v_k \to v$ on $\Omega$, then $\RMA(v_k)$ converges to $\RMA(v)$ weakly. 
\end{remark}

Due to the presence of the characteristic function $\mathbf{1}_{\sigma}$ in the measure $\mu_{C,v_k}$ from Definition~\ref{def:measure-on-each-cell}, where $\sigma$ is closed in $\mathbf{L}_{\sigma}$, one can not directly invoke Remark \ref{rem:continuity-of-RMA} to say that $\NAMA(\psi)$ in (\ref{eq:nama-cont}) equals \[\sum_{\sigma} \mu_{\sigma, v},\] since some mass can be trapped outside of $\sigma$, as illustrated by the following example. 

\begin{example}
    Define $v\colon \mathbf{R}^2 \to \mathbf{R}$ by $\max\{x, y, 0\}$ and choose $v_k = k^{-1} \log(\exp(kx) + \exp(ky) + 1),$ which locally uniformly converges to $v$ as $k \to \infty$. Let $E$ be the closed set $\{x, y \leq 0\}$. Then the weak limit of $\mathbf{1}_E \RMA(v_k)$ is $\frac{1}{6} \delta_0$, while $\mathbf{1}_E \RMA(v) = \RMA(v) = \frac{1}{2} \delta_0$, where $\delta_0$ is the Dirac mass at $(0,0)$. 
\end{example}

We will get around this issue in Step \ref{step5} of the next section by choosing suitable approximations $v_k$ of $v$. 

\section{Non-Archimedean Calabi--Yau potentials}\label{sec:non-archimedean-metric}

Let $\bar{X}, D, X$ be as in Setup \ref{setup}, recalled briefly here: $\bar{X}$ is a smooth $n$-dimensional complex projective Fano variety, $D = D_1 + \cdots + D_d$ is a reduced SNC anticanonical divisor in $\bar{X}$ with $d < n$, whose deepest intersection $Z = \bigcap_{i=1}^d D_i$ is irreducible. For $1 \leq i \leq d$, let $s_i \in H^0(\bar{X}, b_i L)$ be the defining equation of $D_i$, where $L$ is an ample $\mathbf{Q}$-line bundle satisfying $-K_{\bar{X}} = \sum_{i=1}^d b_i L$. Fix some positive integer $e$ divisible by $b_1, \cdots, b_d, \sum_{i=1}^d b_i$ such that $eL$ is very ample. 

The complement $X = \bar{X} \backslash D$ is a smooth affine Calabi--Yau variety because it has a non-vanishing regular $n$-form $\Omega$ with a simple pole along $D$, and this form is unique up to scaling. This induces a measure \[\mu^{\mathrm{a}} \coloneqq (\sqrt{-1})^{n^2}\Omega \wedge \bar{\Omega} \text{ on } X.\] As will be reviewed later in \S \ref{sec:meas-convergence}, an appropriately rescaled limit of $\mu^{\mathrm{a}}$ is non-Archimedean in nature; more precisely it is the Lebesgue measure on the essential skeleton $\sk(X)$ in the Berkovich analytification $X^\na$. Therefore, the latter measure is a natural non-Archimedean replacement for $\mu^{\mathrm{a}}$, denoted by $\mu^\na$. 

We call any smooth strictly plurisubharmonic solution to the Archimedean Monge--Amp\`{e}re equation \[\CMA(\psi^{\mathrm{a}}) = \mu^{\mathrm{a}}\] a Calabi--Yau potential, as its curvature form $dd^c \phi^{\mathrm{a}}$ is Ricci flat. It is then natural to seek a (continuous) plurisubharmonic function $\psi^\na$ on $X^\na$ that solves the non-Archimedean Monge--Amp\`{e}re equation \begin{equation}\label{eq:NA-MA}\NAMA(\psi^\na) = \mu^\na,\tag{NA MA}\end{equation} and it is exactly in this sense we call $\psi^\na$ a \emph{non-Archimedean Calabi--Yau potential}. 

The solution crucially uses the PDE from Theorem~\ref{thm:CTY}. Indeed, we will prove the function, \begin{equation}\label{eq:Psi} \psi^\na(\nu) \coloneqq u(\frac{e}{b_1} \nu(s_1), \cdots, \frac{e}{b_d}\nu(s_d)) \text{ \ on $X^\na$},\end{equation} solves (\ref{eq:NA-MA}), where $u$ is the solution of the PDE from Theorem~\ref{thm:CTY} with the constant $c$ therein to be specified later in Step~\hyperref[step2]{2}, and the notation $\nu(s_i)$ is as in Definition~\ref{def:triv-metric-over-triv-valued-field}. 

There have been fruitful interactions between the complex, real, and non-Archimedean Monge--Ampère equations in the literature. For example, on compact toric varieties, a similar idea of using Archimedean Monge--Ampère solutions to produce non-Archimedean solutions was employed in \cite{BGGJK21}, and \cite{Liu11} also used the real Archimedean Monge--Ampère equation to solve the non-Archimedean equation for totally degenerate abelian varieties. Recent advances on the metric SYZ conjecture also relies on a NA MA--real MA comparison property; see \cite{Li23}, \cite{HJMM24}, \cite{Li24a}, and \cite{PS25}.

\subsection*{Outline} We first summarize our proof strategy. 
\begin{itemize}
    \item[Step]\hyperref[step1]{1}: Construct a moment map $\tau$ and a convex function $v$ such that the candidate solution $\psi^\na$ \\ \hspace{\labelwidth}\phantom{1:-}equals $(\tau^\trop)^* v$. In particular, $\psi^\na$ is a continuous plurisubharmonic function on $X^\na$.
    \item[Step]\hyperref[step2]{2}: Find an explicit cellular decomposition of the characteristic polyhedron $\Sigma_\tau$ using quasi- \\ \hspace{\labelwidth}\phantom{1:-}monomial valuations.
    \item[Step]\hyperref[step3]{3}: Rewrite integration formulae on $X^\na$ using the cellular decomposition from Step \hyperref[step2]{2}, thereby \\ \hspace{\labelwidth}\phantom{3:} transforming (\ref{eq:NA-MA}) into a real MA equation.
    \item[Step]\hyperref[step4]{4}: Show the real MA measure of (appropriately restricted) $v$ is the Lebesgue measure on \\ \hspace{\labelwidth}\phantom{1:} $\tau^\trop(\mathrm{Sk}(X))$, the tropical image of the essential skeleton. 
    \item[Step]\hyperref[step5]{5}: Prove the equation from Step \hyperref[step3]{3} using Step \hyperref[step4]{4}. A subtlety is that $\tau^\trop(\Sigma_\tau)$ has a boundary,\\ \hspace{\labelwidth}\phantom{5:} and one needs to find nice approximations $v_k$ of $v$ such that the mass of $\RMA(v_k)$ \\ \hspace{\labelwidth}\phantom{5:} concentrates on $\tau^\trop(\Sigma_\tau)$. 
\end{itemize}

\subsection*{Step 1: Constructing the tropicalization}\label{step1}\hfill

We first construct a moment map $\tau\colon U^\na \to \mathbf{T}^{\na}$ for some Zariski open subset $U \subseteq X$ and some torus $\mathbf{T} = \mathbf{G}^{N+1}_m / \mathbf{G}_m$, such that the characteristic polyhedron $\Sigma_\tau$ admits a simple cellular decomposition in Step \hyperref[step2]{2}. Then we show $\tau$ tropicalizes $\psi^\na|_{U^\na}$ and $\psi^\na$ is in $\mathrm{CPSH}(X^\na)$.

\subsubsection*{(1a) The moment map.}\label{(1a)} \hfill

Because $eL$ is very ample, by Bertini's theorem, after possibly replacing $e$ by a multiple, we can choose a finite subset $\mathcal{S}' \coloneqq \{s_0', \cdots, s_m'\} \subseteq H^0(\bar{X}, eL)$ with $m \geq n$, such that
\begin{enumerate}[label=(\roman*), ref=\roman*]\itemindent=-10pt
    \item $\mathcal{S} \coloneqq \{s_0', \cdots, s_m', s_1^{e/b_1}, \cdots, s_d^{e/b_d}\}$ is a basis of $H^0(\bar{X}, eL)$.
    \item the intersection of the zero loci of any $k$ members of $\mathcal{S}$ is smooth and connected if $k \in [1,n-1]$, reduced and of dimension $0$ if $k = n$, and empty if $k \geq n+1$. \label{tau-i}\label{tau-ii}\label{tau-iii}
\end{enumerate}

This can be achieved by induction as follows. Let us start with $\mathcal{S}_0 \coloneqq\{s_1^{e/b_1}, \cdots, s_d^{e/b_d}\} \subseteq V \coloneqq H^0(\bar{X}, eL).$ 

\begin{lemma}The sections of $\mathcal{S}_0$ are linearly independent.
\end{lemma}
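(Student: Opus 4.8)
The plan is to rule out any nontrivial linear relation $\sum_{i=1}^{d} c_i\, s_i^{e/b_i} = 0$ in $H^0(\bar{X}, eL)$ by restricting it to a stratum on which all but one of the summands is forced to vanish. So fix $i \in [d]$ and set $V_i \coloneqq \bigcap_{j \neq i} D_j$ (read as $\bar{X}$ when $d = 1$). As recalled in Setup~\ref{setup}, repeated use of the Lefschetz hyperplane theorem and adjunction shows that $V_i$, being an intersection of $d - 1 \leq n - 1$ of the ample divisors $D_j$, is a non-empty, smooth, connected---hence irreducible---subvariety of $\bar{X}$ of pure dimension $n - d + 1$. Since $V_i \subseteq D_j$ for every $j \neq i$, the section $s_j$, and therefore $s_j^{e/b_j}$, restricts to the zero section on $V_i$; restricting the relation to $V_i$ thus leaves $c_i \cdot \bigl(s_i^{e/b_i}\bigr)\big|_{V_i} = 0$.

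It then remains only to see that $\bigl(s_i^{e/b_i}\bigr)\big|_{V_i}$ is a nonzero section, which forces $c_i = 0$. This is the single point requiring a verification: if $s_i$ vanished identically on $V_i$, then $V_i \subseteq D_i \cap V_i = D_1 \cap \cdots \cap D_d = Z$, contradicting $\dim Z = n - d < n - d + 1 = \dim V_i$. Hence $s_i|_{V_i} \neq 0$, and since $V_i$ is integral its power $\bigl(s_i|_{V_i}\bigr)^{\otimes e/b_i}$ is nonzero as well, giving $c_i = 0$. As $i$ was an arbitrary index in $[d]$, the sections of $\mathcal{S}_0$ are linearly independent.

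I expect no genuinely hard step here. The natural alternative---comparing orders of vanishing along an irreducible component of $D_i$, where $s_i^{e/b_i}$ vanishes to order $e/b_i$ while every $s_j^{e/b_j}$ with $j \neq i$ is a unit---does not work directly, since a sum of units can vanish to arbitrarily high order and cancellation is uncontrolled; passing to $V_i$ sidesteps this by annihilating every competing summand at once. The only ingredients are the stratification from Setup~\ref{setup} together with the inequality $\dim Z < \dim V_i$, which is exactly where the standing hypothesis $d < n$ enters.
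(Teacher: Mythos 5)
Your proof is correct, and it takes a genuinely different route from the paper's. The paper argues locally: it passes to the generic point $\eta$ of the deepest stratum $Z = D_1 \cap \cdots \cap D_d$, where $\mathcal{O}_{\bar{X},\eta}$ is a regular local ring of dimension $d$ whose completion is $\kappa(\eta)[\![z_1,\ldots,z_d]\!]$ with $z_i$ cutting out $D_i$, and derives a contradiction because a relation $\sum_i \theta_i s_i^{e/b_i} = 0$ would force a nontrivial relation among the $z_i^{e/b_i}$ (up to units), contradicting the analytic independence of a regular system of parameters. You instead argue globally, restricting the relation to the stratum $V_i = \bigcap_{j \neq i} D_j$, on which every summand except the $i$-th vanishes identically, and then killing $c_i$ via the dimension count $\dim Z = n-d < n-d+1 = \dim V_i$, which shows $s_i|_{V_i} \neq 0$ on the integral variety $V_i$. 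Both arguments draw on the same geometric input from Setup~\ref{setup} (nonemptiness, smoothness and connectedness of the strata, which is where $d \leq n-1$ and Lefschetz enter). Your version is more elementary --- it avoids completion and Cohen's structure theorem entirely and isolates one coefficient at a time --- while the paper's localization at $\eta$ is a single uniform computation that fits its recurring technique of working with local rings at generic points of strata. One small caveat on your closing remark: the inequality $\dim Z < \dim V_i$ itself holds for any $d$; the hypothesis $d < n$ is really used to guarantee, via Lefschetz and adjunction, that the strata $V_i$ and $Z$ are nonempty and connected with the expected dimensions. This does not affect the validity of the proof.
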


\begin{proof}
    By assumption, the deepest intersection $\bigcap_{s \in \mathcal{S}_0} \mathrm{div}(s)$ is nonempty and connected, and we let $\eta$ denote its generic point. Then $\mathcal{O}_{\bar{X}, \eta}$ is a regular local ring of dimension $d$. By Cohen's structure theorem, there is a non-canonical isomorphism $\widehat{\mathcal{O}_{\bar{X}, \eta}} = \kappa(\eta) [\![z_1, \cdots, z_d]\!]$ where each $z_i$ cuts out $D_i$ at $\eta$. Fix some local trivialization $t$ of $eL$ at $\eta$. If, by way of contradiction, we assume there is some non-trivial linear relation $\sum_i \theta_i s_i^{e/b_i} = 0$, then we get a relation $\sum_i \theta_i (s_i^{e/b_i} / t) = 0 \in \mathcal{O}_{\bar{X}, \eta}$. This implies a non-trivial algebraic relation among $z_1, \cdots, z_d$, which is a contradiction to $\mathcal{O}_{\bar{X}, \eta}$ being regular of dimension $d$. 
\end{proof}

The base case of induction is established as follows. By Bertini's theorem there is an open dense subset $U_0$ of $V$ such that the zero locus of any $s \in U_0$ intersects $E_0 \coloneqq \bigcap_{\sigma \in \mathcal{S}_0} \mathrm{div}(\sigma)_{\mathrm{red}}$ transversally. Moreover, $\mathrm{div}(s) \cap E_0$ is reduced and of dimension $0$ if $d = n - 1$, and is irreducible if $d < n - 1$. Because the quotient map $\pi\colon V \to V / \mathrm{Span}( \mathcal{S}_0)$ is open and continuous, we can assume $s \in U_0$ is not in $\mathrm{Span}(\mathcal{S}_0)$, and define a new set $\mathcal{S}_1 \coloneqq \mathcal{S}_0 \cup \{s_0'\}$. Now $\mathcal{S}_1$ satisfies (ii). This finishes the base step of induction. 

We now induct on the cardinality $|\mathcal{S}_i|$: in each step of induction we will get a bigger set of linearly independent sections $\mathcal{S}_{i+1} \supsetneq \mathcal{S}_{i}$ in $V$, and induct till the cardinality reaches $\dim V$. Suppose we have $\mathcal{S}_i$ that satisfies condition (ii). If $|\mathcal{S}_i| \leq n - 1$, then we can repeat the base case. If $|\mathcal{S}_i| \geq n$, then for each size $n-1$ subset $I \subseteq \mathcal{S}_i$, we get an open dense subset $U_I$ of $V$ such that the zero locus of any element in $U_I$ intersects $\bigcap_{i \in I} \mathrm{div}(s_i)_{\mathrm{red}}$ at a smooth and connected subvariety. Pick some $s \in \bigcap_I (U_I \backslash \mathrm{Span}(\mathcal{S}_i))$, which is possible because the quotient map $V \to V / \mathcal{S}_i$ is open and continuous. Moreover, because $eL$ is very ample, if we let $\mathcal{P}$ be the set of points that arise as the intersection of the zero loci of any $n$ members of $\mathcal{S}_i$, then we can assume $s(p) \neq 0$ for any $p \in \mathcal{P}$. Then $\mathcal{S}_{i+1} \coloneqq \mathcal{S}_i \cup \{s\}$ satisfies condition (ii). This finishes the induction. 

In addition, because $m \geq n$, by (\ref{tau-iii}), the sections in $\mathcal{S}' = \mathcal{S} \backslash \mathcal{S}_0$ have no common zeros. 

Set $N \coloneqq m + d$, and define $\tau_0 \colon \bar{X} \to \mathbf{P}^N$ by $[s_0': \cdots:s_m':s_1^{e/b_1}: \cdots, s_d^{e/b_d}]$, whose restriction to the Zariski open subset \[U \coloneqq \big(\bigcap_{j=0}^m\{s_j' \neq 0\} \big) \cap \big(\bigcap_{i = 1}^d \{s_i \ne 0\}) = \big(\bigcap_{j=0}^m\{s_j' \neq 0\} \big) \cap X\] lands in the torus $\mathbf{T} = \mathbf{G}^{N+1}_m / \mathbf{G}_m$ of $\mathbf{P}^{N}$; that is, the complement of all coordinate hyperplanes in $\mathbf{P}^N$. Now the analytification of $\tau_0|_U$, denoted as \[\tau\colon U^\na \to \mathbf{T}^\na,\] is a moment map. Recall from Example \ref{eg:alt-trop} that there is a tropicalization map $\trop \colon \mathbf{T}^\na \to \NRT = \mathbf{R}^{N+1} / \mathbf{R}$, and $\tau^\trop = \trop \circ \tau$ is a tropical chart.

\subsubsection*{(1b) Tropicalizing the candidate solution.}\label{(1b)}\hfill

Recall from (\ref{eq:Psi}) that the candidate solution to (\ref{eq:NA-MA}) is 
\[\psi^\na(\nu) = u(\frac{e}{b_1} \nu(s_1), \cdots, \frac{e}{b_d}\nu(s_d)) \text{ \ on $X^\na$}.\]

To take the non-Archimedean Monge--Ampère measure of $\psi^\na$, we first need to verify that it belongs to $\mathrm{CPSH}(X^\na)$ in the sense of \cite{CLD25} (see Definition \ref{def:class-of-functions}). We show this in Lemma~\ref{lem:psi-na-psh}. Along the way, we will also see that $\psi^\na|_{U^\na}$ is tropicalized by $\tau$. Then, because $U$ is Zariski open in $X$, Equation~(\ref{cor:no-pluripolar-mass}) implies \[\NAMA(\psi^\na) = \NAMA(\psi^\na|_{U^\na}),\] 
which will allow us to work with $U^\na$ instead of $X^\na$ later. 

By Theorem \ref{thm:CTY}, $u$ is a convex differentiable function on the octant $\mathbf{R}^d_{\geq 0}$. For our purposes, we need $u$ to be defined on all of $\R^d$. But global continuous extension of a convex function is not always possible; see e.g. \cite{SS79}. Nonetheless, in this situation we can extend using homogeneity of $u$, carried out in Appendix~\hyperref[appen:A]{A}. Such an extension is not unique. 

\begin{proposition}\label{prop:extension}
    There is a continuously differentiable, non-negative and convex function $\tilde{u}$ on $\R^d$ extending $u$ that is identically zero on the half-space $\{t_1 + \cdots + t_d \leq 0\} \subseteq \mathbf{R}^d$, satisfies $\sum_{i=1}^d \frac{\partial \tilde{u}}{\partial t_i} \geq 0$, and $\tilde{u}(\lambda t_1, \cdots, \lambda t_d) = \lambda^{(n+d)/n} \tilde{u}(t_1, \cdots, t_d)$ for any $\lambda \in \mathbf{R}_{> 0}.$
\end{proposition}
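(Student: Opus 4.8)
The idea is that the boundary condition $\sum_{i=1}^d \partial u/\partial t_i = 0$ on $\partial\mathbf{R}^d_{\ge 0}$ from Theorem~\ref{thm:CTY} singles out the diagonal direction $\mathbf{1}:=(1,\dots,1)$, and that the homogeneity degree $\beta:=\tfrac{n+d}{n}$ exceeds $1$ because $d<n$; both are exploited below. Write $s(t):=t_1+\cdots+t_d$. I would first record that $u$ is non-decreasing in the direction $\mathbf{1}$ on $\mathbf{R}^d_{\ge 0}$: every $q\in\mathbf{R}^d_{>0}$ is $p+r\mathbf{1}$ with $r=\min_i q_i>0$ and $p\in\partial\mathbf{R}^d_{\ge 0}$, and $\rho\mapsto u(p+\rho\mathbf{1})$ has derivative $\sum_i\partial u/\partial t_i(p+\rho\mathbf{1})$, which vanishes at $\rho=0$ by the boundary condition and is non-decreasing in $\rho$ by convexity of $u$; hence $\sum_i\partial u/\partial t_i\ge 0$ throughout $\mathbf{R}^d_{\ge 0}$, which is also the sign condition wanted for $\tilde u$.

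The core construction is the least diagonal retraction onto the octant: set $c(t):=\max\{0,-t_1,\dots,-t_d\}$, a convex, positively $1$-homogeneous, piecewise linear function, and $\pi(t):=t+c(t)\mathbf{1}\in\mathbf{R}^d_{\ge 0}$, which is continuous, positively $1$-homogeneous, linear on each cone where a fixed coordinate (or $0$) realizes the maximum defining $c$, and equal to the identity on $\mathbf{R}^d_{\ge 0}$. One checks that $u\circ\pi$ extends $u$, is non-negative and positively $\beta$-homogeneous, and satisfies $\sum_i\partial(u\circ\pi)/\partial t_i\ge 0$ (the contribution being $\sum_i\partial u/\partial t_i\ge0$ on $\mathbf{R}^d_{\ge0}$ and $0$ off it); it is convex because, writing $c_\theta:=c((1-\theta)t^0+\theta t^1)$ and $\bar c:=(1-\theta)c(t^0)+\theta c(t^1)$, convexity of $c$ gives $c_\theta\le\bar c$, so $(1-\theta)\pi(t^0)+\theta\pi(t^1)=\pi((1-\theta)t^0+\theta t^1)+(\bar c-c_\theta)\mathbf{1}$ lies above $\pi((1-\theta)t^0+\theta t^1)$ in the direction $\mathbf{1}$, whence $\mathbf{1}$-monotonicity and then convexity of $u$ on $\mathbf{R}^d_{\ge0}$ yield $(u\circ\pi)((1-\theta)t^0+\theta t^1)\le u\big((1-\theta)\pi(t^0)+\theta\pi(t^1)\big)\le(1-\theta)(u\circ\pi)(t^0)+\theta(u\circ\pi)(t^1)$. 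The step I expect to be the main obstacle is global $C^1$ regularity: on the interior of each linear branch of $\pi$ it is immediate from $u\in C^\infty(\mathbf{R}^d_{>0})$, while across a wall where $\pi$ switches branch — which $\pi$ always maps into $\partial\mathbf{R}^d_{\ge 0}$ — a chain-rule computation of the two one-sided gradients of $u\circ\pi$ shows that they coincide precisely when $\sum_i\partial u/\partial t_i$ vanishes at the image point, i.e.\ exactly by the boundary condition. The hypothesis $u\in C^{1,\alpha}(\mathbf{R}^d_{\ge 0})$ is what makes these one-sided gradients meaningful up to, and along all lower-dimensional faces of, $\partial\mathbf{R}^d_{\ge 0}$, where several branches meet.

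The function $u\circ\pi$ has every required property except vanishing on $\{s\le 0\}$; to also achieve that I would recast the extension through the homogeneity slice $\{s=1\}$. By $\beta$-homogeneity, an extension is the same datum as a function $w$ on $\{s=1\}\cong\mathbf{R}^{d-1}$, re-homogenized to degree $\beta$ on $\{s>0\}$ and declared $0$ on $\{s\le 0\}$; this glues to a $C^1$ function vanishing on $\{s\le 0\}$ as soon as $w$ has at most linear growth, since $\beta>1$ then kills the $1$-jet across $\{s=0\}$. The subtle point is that the $\beta$-homogenization of a merely convex $w$ need not be convex; a Schur-complement computation gives that it is convex exactly when $\langle\nabla w,(D^2w)^{-1}\nabla w\rangle\le\tfrac{\beta}{\beta-1}w$ pointwise, an inequality satisfied automatically by $w=u|_{\{s=1\}}$ (because $u$ is convex), so the task is to extend $u|_{\{s=1\}}$ from the simplex $\{s=1\}\cap\mathbf{R}^d_{\ge 0}$ to all of $\{s=1\}$ while preserving this inequality, keeping linear growth, and matching along the simplex boundary the $1$-jet dictated by the boundary condition (for instance by extending through each facet using the boundary $1$-jet and then convex-regularizing, and checking the inequality survives). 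This extension is where the non-uniqueness asserted in the statement lives, and once it is produced one reads off that $\tilde u|_{\mathbf{R}^d_{\ge 0}}=u$, non-negativity, homogeneity, and $\sum_i\partial\tilde u/\partial t_i\ge 0$ are retained.
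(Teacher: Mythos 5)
Your opening blocks are sound: the diagonal monotonicity of $u$ follows from the boundary condition plus convexity exactly as you argue, the retraction $u\circ\pi$ is convex, homogeneous and $C^1$ across the walls for the reason you give, and your Schur-complement criterion for convexity of the $\beta$-homogenization is correct. Your second construction (extend on the slice $\{s=1\}$, re-homogenize to degree $\beta=(n+d)/n$ on $\{s>0\}$, glue with $0$ on $\{s\le 0\}$) is also the paper's strategy; the paper produces the slice extension by invoking the $C^1$ convex extension theorem of Azagra--Mudarra rather than the facet-by-facet regularization you only gesture at, and that step of yours is in any case left as a sketch.

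The genuine gap is the claim that the glued function is $C^1$ ``as soon as $w$ has at most linear growth, since $\beta>1$ then kills the $1$-jet across $\{s=0\}$.'' Linear growth kills the $0$-jet (continuity) but not the $1$-jet. Take $t_0\in\{s=0\}\setminus\{0\}$, which is nonempty once $d\ge 2$, and compute the one-sided derivative of $\tilde u(t)=s(t)^{\beta}\,w\bigl(t/s(t)\bigr)$ at $t_0$ in the direction $\mathbf{1}/d$: since $t_0+h\mathbf{1}/d=h\bigl(\mathbf{1}/d+t_0/h\bigr)$ with $s=h$, it equals $\lim_{r\to\infty} r^{1-\beta}\,w(\mathbf{1}/d+rt_0)$. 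If the recession slope $w^{\infty}(t_0)$ is positive this limit is $+\infty$, because $r^{1-\beta}\cdot r=r^{2-\beta}\to\infty$ (here $\beta<2$ precisely because $d<n$). A concrete instance with $d=2$, $w(x)=\sqrt{1+x^2}$ on the slice line gives $\tilde u(t_0+h\mathbf{1})\sim\sqrt{2h}$ at $t_0=(1,-1)$: continuous but with infinite diagonal derivative. So differentiability along $\{s=0\}\setminus\{0\}$ forces $w^{\infty}\equiv 0$ on the slice, i.e.\ sublinear growth in every direction; but a finite convex function with identically vanishing recession function satisfies $w(y+v)\le w(y)$ for all $y,v$ and is therefore constant, whereas $u|_{\{s=1\}}$ cannot be constant for $d\ge 2$ (that would make $D^2u$ rank one and $\det D^2u=0$, contradicting the Monge--Amp\`ere equation). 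This is not a technicality to defer: as written, no admissible choice of $w$ in your scheme makes the gluing $C^1$ at points of $\{s=0\}$ other than the origin, so the last third of your argument does not close and the obstruction needs to be confronted head-on (e.g.\ by weakening what is actually required of the extension off the octant) rather than absorbed into the growth hypothesis on $w$.
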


\begin{convention}
    We use $u$ to denote the extension $\tilde{u}$ obtained in Proposition~\ref{prop:extension}.
\end{convention}

\begin{lemma}\label{lem:psi-na-psh}
    In the sense of \cite{CLD25} (see Definition~\ref{def:class-of-functions}), the function $\psi^\na$ lies in $\mathrm{CPSH}(X^\na)$ and so its restriction $\psi^\na|_{U^\na}$ lies in $\mathrm{CPSH}(U^\na)$. Thus both have well-defined non-Archimedean Monge--Ampère measures. 
\end{lemma}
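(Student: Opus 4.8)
The plan is to establish that $\psi^\na \in \mathrm{CPSH}(X^\na)$ by exhibiting it, on a suitable Zariski open cover of $X$, as the pullback of a convex function under a tropical chart, and then invoking Example~\ref{eg:comp-of-fn-is-psh} together with the sheaf property of $\mathrm{CPSH}$. The key point is that the extended function $u = \tilde u$ from Proposition~\ref{prop:extension} is convex and continuous on all of $\mathbf{R}^d$, so compositions of the form $u(\log|f_1|, \dots, \log|f_d|)$ are automatically CPSH whenever the $f_i$ are invertible regular functions.

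First I would recall that on the Zariski open set $U = \bigl(\bigcap_{j=0}^m \{s_j' \ne 0\}\bigr) \cap X$ constructed in (1a), the ratios $s_i^{e/b_i}/s_0'^{\,e/b_i}$ (or more conveniently, using the homogeneous-coordinate tropicalization of Example~\ref{eg:alt-trop}, the coordinate functions of $\tau$) are invertible regular functions on $U$. Precisely, by Definition~\ref{def:triv-metric-over-triv-valued-field}, for $\nu \in U^\na$ we have $\nu(s_i) = (\phi_{\mathrm{triv}} - \log|s_i|)(\nu)$, and since on $U$ we may trivialize $eL$ by any of the non-vanishing sections $s_j'$, the quantity $\tfrac{e}{b_i}\nu(s_i)$ differs from $-\log|s_i^{e/b_i}/s_0'^{\,e/b_0}|$ composed with the appropriate trivialization only by the affine contribution of $\phi_{\mathrm{triv}}$, which is constant along $\tau^\trop$-fibers; concretely $\tfrac{e}{b_i}\nu(s_i)$ is an affine-linear function of the tropical coordinates $(\alpha(z_0),\dots,\alpha(z_N))$ of $\tau^\trop(\nu)$. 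Hence there is a convex function $v$ on $\NRT = \mathbf{R}^{N+1}/\mathbf{R}$ — namely $v = u$ precomposed with the affine-linear map $\mathbf{R}^{N+1}/\mathbf{R} \to \mathbf{R}^d$ extracting these coordinates — such that $\psi^\na|_{U^\na} = (\tau^\trop)^* v$. Convexity of $v$ follows from convexity of $u$ (Proposition~\ref{prop:extension}) and affineness of the coordinate map; by Definition~\ref{def:class-of-functions} and the remarks following it, $(\tau^\trop)^* v$ is a plurisubharmonic tropical function, hence lies in $\mathrm{CPSH}(U^\na)$. Equivalently one invokes Example~\ref{eg:comp-of-fn-is-psh} directly with $g = \tilde u$ and $f_i$ the invertible regular functions $s_i^{e/b_i}/s_0'^{\,e/b_0}$ on $U$, after checking that the constant and the choice of trivializing section only shift the argument of $\tilde u$ by an affine-linear map, which preserves convexity.

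Next I would upgrade this from $U^\na$ to all of $X^\na$. Since $\mathrm{CPSH}$ is a sheaf (see \S\ref{sec:pp-on-berk-spaces}), it suffices to cover $X^\na$ by Berkovich opens on each of which $\psi^\na$ is CPSH. The natural cover is obtained by varying which of the $s_j'$ is used as the trivializing/denominator section: for each $j_0 \in \{0,\dots,m\}$ the open set $U_{j_0} = \bigl(\bigcap_{j}\{s_j' \ne 0\}\bigr)$ only sees sections $s_j'$, so in fact the single $U$ already witnesses CPSH-ness away from $\bigcup_j \{s_j' = 0\}$. For the remaining locus, note that the defining formula $\psi^\na(\nu) = u(\tfrac{e}{b_1}\nu(s_1),\dots,\tfrac{e}{b_d}\nu(s_d))$ makes sense on all of $X^\na$ because $s_1,\dots,s_d$ are nowhere zero on $X$, so $\nu(s_i) \in [0,\infty)$ is finite there; and near any point of $X^\na$ one can choose some $s_{j_0}'$ non-vanishing at its center (the $s_j'$ have no common zeros on $\bar X$ by (iii), in particular on $X$), giving a Berkovich-open neighborhood on which the same argument as above applies with $s_{j_0}'$ in place of $s_0'$. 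By the sheaf property, $\psi^\na \in \mathrm{CPSH}(X^\na)$, and restriction to the open $U^\na$ gives $\psi^\na|_{U^\na} \in \mathrm{CPSH}(U^\na)$. Then $\NAMA(-)$ is defined on both by \S\ref{sec:NAMA}, and by Equation~(\ref{cor:no-pluripolar-mass}) the two measures agree on $X^\na$.

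The main obstacle is bookkeeping rather than conceptual: one must carefully track how the trivial metric $\phi_{\mathrm{triv}}$ enters, i.e. verify that $\tfrac{e}{b_i}\nu(s_i)$, as a function on $U^\na$, is genuinely of the form (affine-linear map) $\circ\, \tau^\trop$ with the affine-linear map having \emph{integral} linear part, so that convexity is preserved and the composition falls under Example~\ref{eg:comp-of-fn-is-psh} (or Definition~\ref{def:class-of-functions}). The subtle point here is that $s_i$ is a section of $b_i L$ while $\tau$ is built from sections of $eL$, so one works with $s_i^{e/b_i} \in H^0(\bar X, eL)$ and divides by a power of a denominator section to land in $\mathcal{O}_{X^\na}(U)^\times$; the ratio $s_i^{e/b_i}/(s_{j_0}')$ is the relevant invertible regular function, and $\log$ of its absolute value is, up to the (affine) trivial-metric contribution, one of the tropical coordinates. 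One also needs that $\tilde u$ is finite and convex on \emph{all} of $\mathbf{R}^d$ — this is exactly the content of Proposition~\ref{prop:extension}, which is why the extension was arranged — since the affine-linear coordinate map need not land in $\mathbf{R}^d_{\ge 0}$. Once these compatibilities are pinned down, the CPSH conclusion and the well-definedness of the Monge--Ampère measures are immediate.
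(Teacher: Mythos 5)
There is a genuine gap in the first paragraph, at the point where you assert that $\tfrac{e}{b_i}\nu(s_i)$ ``is an affine-linear function of the tropical coordinates.'' It is not. By Definition~\ref{def:triv-metric-over-triv-valued-field}, $\tfrac{e}{b_i}\nu(s_i) = \phi_{\mathrm{triv}} - \log|s_i^{e/b_i}|$, and the trivial metric on $eL^\na$ is $\phi_{\mathrm{triv}} = \max_{0\le j\le m}\log|s_j'|$; so in tropical coordinates $\tfrac{e}{b_i}\nu(s_i) = \max_j(x_{m+i}-x_j)$ is convex piecewise-linear, not affine-linear. The maximum does not disappear on the Zariski open $U$ (or on $\{s_{j_0}'\ne 0\}$): trivializing by $s_0'$ only rewrites it as $\log|s_0'/s_i^{e/b_i}| + \max_j\log|s_j'/s_0'|$, and the second term is a nonconstant convex function of the point, not ``the affine contribution of $\phi_{\mathrm{triv}}$.'' Consequently your two justifications for convexity of $v$ both fail: $v$ is $u$ precomposed with a vector of \emph{convex} piecewise-linear maps, and a convex $u$ composed with convex (non-affine) inner functions is not automatically convex unless $u$ is monotone coordinatewise, which is not among the properties guaranteed by Proposition~\ref{prop:extension} (only diagonal monotonicity $\sum_i\partial_i u\ge 0$ is). Likewise, applying Example~\ref{eg:comp-of-fn-is-psh} with $f_i = s_i^{e/b_i}/s_0'$ computes $u(\log|f_1|,\dots,\log|f_d|)$, which differs from $\psi^\na$ by the non-affine diagonal shift $\max_j\log|s_j'/s_0'|\cdot(1,\dots,1)$ inside $u$.

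The paper's proof repairs exactly this point with two ingredients you are missing. First, it covers $X^\na$ by the \emph{Berkovich} opens $U_J = \bigcap_{j\in J}\{\nu(s_j')<2\}\cap\bigcap_{j\notin J}\{\nu(s_j')>1\}$, on which $\phi_{\mathrm{triv}}$ is computed by the maximum over $j\in J$ alone. Second, and crucially, it observes that the inner maxima $\max_{j\in J}\log|f_{ij}|$ are achieved at the \emph{same} index $j$ for every $i=1,\dots,d$ (the vectors $(\log|f_{1j}|,\dots,\log|f_{dj}|)$ for varying $j$ differ only by diagonal shifts), so that diagonal monotonicity of $u$ allows the maximum to be pulled outside:
\[
\psi^\na|_{U_J} = u\bigl(\max_{j\in J}\log|f_{1j}|,\dots\bigr) = \max_{j\in J} u(\log|f_{1j}|,\dots,\log|f_{dj}|).
\]
Each term on the right is CPSH by Example~\ref{eg:comp-of-fn-is-psh} (here the arguments genuinely are logs of invertible regular functions), and the maximum is CPSH by Example~\ref{eg:max-of-psh}; the sheaf property then concludes. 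Your proposal would need this commutation of $u$ with $\max_j$ — or a proof that $u$ is nondecreasing in each variable separately — to close the gap; as written, the convexity of $v$ and the applicability of Example~\ref{eg:comp-of-fn-is-psh} are both unjustified.
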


\begin{proof}
    The proof is similar to that of Proposition~\ref{prop:fs-psh}. For any non-empty subset $J \subseteq [m]$, define an open subset of $X^\na$ by, \[U_J \coloneqq \bigcap_{j \in J} \{\nu(s_j') < 2\} \cap \bigcap_{j \not\in J} \{\nu(s_j') > 1\}.\] Because $\min_{0 \leq j \leq m} \nu(s_j') = 0$ for any $\nu \in X^\na$, the sets $\{U_J\}_J$ form an open cover of $X^\na$. 

    As the base field of analytification is trivially valued, there is a trivial metric $\phi_{\mathrm{triv}}$ on $eL^\na$, which by Example~\ref{def:triv-metric-over-triv-valued-field} can be written as \[\phi_{\mathrm{triv}} = \max_{0 \leq j \leq m} \log|s_j'|,\] since the chosen sections $s_0', \cdots, s_m'$ have no common zeros. Now on each $U_J$ we can write
    \[\frac{e}{b_i} \nu(s_i) = \phi_{\triv} - \log|s_i^{e/b_i}| = \max_{j \in J} \log|f_{ij}|\] where $f_{ij} \coloneqq s_j' / s_i^{e/b_i}$ are non-vanishing analytic functions on $U_J$. So on $U_J$ we have \begin{align*}\psi^\na(\nu) &= u(\frac{e}{b_1} \nu(s_1), \cdots, \frac{e}{b_d}\nu(s_d)) = \max_{j \in J} u(\log|f_{1j}|, \cdots, \log|f_{dj}|).\end{align*}
In the second equality we use the fact that $u$ increases diagonally, allowing us to pull out the maximum. It then follows from Examples~\ref{eg:comp-of-fn-is-psh} and \ref{eg:max-of-psh} that $\psi^\na|_{U_J}$ lies in $\mathrm{CPSH}(U_J)$. Since $\mathrm{CPSH}$ is a sheaf on $X^\na$, we have $\psi^\na \in \mathrm{CPSH}(X^\na)$, and consequently the restriction $\psi^\na|_{U^\na}$ is in $\mathrm{CPSH}(U^\na)$.
\end{proof}

This proof also allows us to see that the function $\psi^\na|_{U^\na} \colon U^\na \to \mathbf{R}$ is tropicalized by the moment map $\tau$ constructed in \hyperref[(1a)]{(1a)}. Indeed, let us define a function $v\colon \mathbf{R}^{N+1} \to \mathbf{R}$ by \begin{align*}\label{eq:v}v(x) \coloneqq u\big(\max_{0 \leq j \leq m} (x_{m+1} - x_j), \dots, \max_{0 \leq j \leq m} (x_{m+d} - x_j)\big).\end{align*} Observe that $v$ descends to $\mathbf{R}^{N+1} / \mathbf{R}$, where $\mathbf{R}$ acts additively; we continue to denote it by $v$. It follows from the proof of Lemma~\ref{lem:psi-na-psh} that \[(\tau^\trop)^* v = \psi^\na|_{U^\na}.\]

Finally, as mentioned before, by Equation~(\ref{cor:no-pluripolar-mass}), to prove (\ref{eq:NA-MA}) it suffices to show \[\NAMA(\psi^\na|_{U^\na}) = \mu^\na.\]

\subsection*{Step 2: Cellular decomposition of $\Sigma_{\tau}$}\label{step2}\hfill

We will freely use the definitions from \S \ref{sec:char-poly} concerning the characteristic polyhedron $\Sigma_\tau \subseteq X^\na$. In this step, we equip $\Sigma_{\tau}$ with an explicit cellular decomposition $\mathcal{C}$, which will allow us to compute the non-Archimedean Monge--Ampère measure later.

By the last step, we can work on $U$ where all $s_0', \cdots, s_m'$ are invertible, and we have a moment map $\tau\colon U^\na \to \mathbf{T}^\na$ where $\mathbf{T} = \mathbf{G}^{N+1}_m/ \mathbf{G}_m$ and $N = m+d$, induced by the collection of sections \[\mathcal{S} = \{s_0', \cdots, s_m', s_1^{e/b_1}, \cdots, s_d^{e/b_d}\}.\] For any subset $I \subseteq \mathcal{S}$ of size $n+1$, let $\tau_I$ denote the map $U^\na \to \mathbf{T}_I^\na$ induced by the sections in $I$, where $\mathbf{T}_I = \mathbf{G}^I_m/\mathbf{G}_m$. Note that $\tau_I$ factors through $\tau$ by projecting onto the $I$-coordinates. By Remark~\ref{rem:char-poly-by-proj}, the characteristic polyhedron is given by \[\Sigma_\tau = \bigcup_{|I| = {n+1}} \tau_I^{-1}(\mathrm{Sk}(\mathbf{T}_I)).\]

\begin{definition}\label{def:cover-of-char-poly}
    For any subset $J \subseteq \mathcal{S}$ with size $1 \leq |J| \leq n$, define a simple normal crossing divisor $F_J$ on $\bar{X}$ by $\sum_{s \in J} \mathrm{div}(s)_{\mathrm{red}}$. Using the construction in \hyperref[(1a)]{(1a)}, we obtain,
    \begin{enumerate}
        \item if $|J| < n$, then the deepest intersection stratum $Z_J$ of $F_J$ is irreducible. Define a cell \[C_J \coloneqq \QM_{Z_J} (\bar{X}, F_J) \subseteq \bar{X}^{\val} = X^\val;\]
        \item if $|J| = n$, the depth-$n$ intersection strata of $F_J$ is a finite collection of reduced points $\{p_{J,l}\}_l$. For each $l$ define \[C_{J,l} \coloneqq \QM_{p_{J,l}}(\bar{X}, F_J) \subseteq \bar{X}^{\val} = X^\val.\] The union $\bigcup_l C_{J,l}$ is $\QM(\bar{X}, F_J)$.
    \end{enumerate} 
    Let $\mathcal{C}$ be the collection of all these cells, and let $\mathcal{C}_i$ be the collection of $i$-dimensional cells. Observe that any $C_{J,l} \in \mathcal{C}_n$ is isomorphic to $\mathbf{R}^n_{\geq 0}$, and all cells are contained in $U^\na \subseteq X^\na$.
\end{definition}

We will first prove in Proposition~\ref{prop:cover} that $\mathcal{C}_n$ covers the characteristic polyhedron $\Sigma_\tau$, and then show in Proposition~\ref{prop:decomp} that $\mathcal{C}$ is a cellular decomposition of $\Sigma_\tau$.

\begin{proposition}\label{prop:cover}
    The union of cells in $\mathcal{C}_n$ equals the characteristic polyhedron $\Sigma_\tau$. 
\end{proposition}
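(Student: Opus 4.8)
The plan is to prove the two inclusions $\bigcup_{C \in \mathcal{C}_n} C \subseteq \Sigma_\tau$ and $\Sigma_\tau \subseteq \bigcup_{C \in \mathcal{C}_n} C$ separately, using the description $\Sigma_\tau = \bigcup_{|I| = n+1} \tau_I^{-1}(\mathrm{Sk}(\mathbf{T}_I))$ from Remark~\ref{rem:char-poly-by-proj} and the characterization of $\mathrm{Sk}(\mathbf{T}_I)$ as a set of quasimonomial valuations (Example~\ref{eg:ske-torus}). For the inclusion $\supseteq$, I would fix a top cell $C_{J,l} = \QM_{p_{J,l}}(\bar{X}, F_J)$ with $|J| = n$, and exhibit a size-$(n+1)$ subset $I \subseteq \mathcal{S}$ such that $\tau_I$ sends $C_{J,l}$ into $\mathrm{Sk}(\mathbf{T}_I)$; the natural choice is to take $I = J \cup \{s\}$ for any one extra section $s \in \mathcal{S} \setminus J$ not vanishing at $p_{J,l}$, whose existence is guaranteed by condition~\ref{tau-iii} and the genericity arranged in \hyperref[(1a)]{(1a)} (the point $p_{J,l}$ lies on exactly the $n$ divisors $\mathrm{div}(s)_{\mathrm{red}}$, $s \in J$). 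Then a quasimonomial valuation $\chi_{a, p_{J,l}} \in C_{J,l}$ has $\chi(s) = 0$ for $s \in I \setminus J$ and $\chi(s) = a_s \geq 0$ for $s \in J$, so its image under $\trop \circ \tau_I$ lands in the orthant $\mathbf{R}^I_{\geq 0}/\mathbf{R} \cong \mathbf{R}^n_{\geq 0}$ that constitutes (a maximal face of) $\mathrm{Sk}(\mathbf{T}_I)$; this is exactly the computation in Example~\ref{eg:qm-for-torus}. Hence $C_{J,l} \subseteq \tau_I^{-1}(\mathrm{Sk}(\mathbf{T}_I)) \subseteq \Sigma_\tau$.

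For the reverse inclusion $\Sigma_\tau \subseteq \bigcup_{C \in \mathcal{C}_n} C$, I would take a point $\chi \in \tau_I^{-1}(\mathrm{Sk}(\mathbf{T}_I))$ for some $I$ with $|I| = n+1$ and show it lies in some top cell $C_{J,l}$. The key input is that $\mathrm{Sk}(\mathbf{T}_I)$ consists of monomial valuations in the coordinates of $\mathbf{T}_I = \mathbf{G}^I_m/\mathbf{G}_m$; equivalently, writing $I = \{\sigma_0, \dots, \sigma_n\}$, the image $\trop(\tau_I(\chi))$ is a point of $\mathbf{R}^{n+1}/\mathbf{R}$, i.e.\ $\chi$ assigns values $\chi(\sigma_i/\sigma_j)$ that, after normalizing so $\min_i \chi(\sigma_i) = 0$, present $\chi$ as a monomial valuation on the chart where the $\sigma_i$ that achieve this minimum are the local coordinates. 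One then argues that the center $c_{\bar X}(\chi)$ (Remark~\ref{rem:center}) is the generic point of the intersection of those $\mathrm{div}(\sigma_i)_{\mathrm{red}}$ on which $\chi$ is strictly positive — this stratum has dimension equal to $n$ minus the number of such $\sigma_i$, and by condition~\ref{tau-iii} it is nonempty, so at most $n$ of the $\sigma_i$ can have $\chi(\sigma_i) > 0$. Collecting those into a set $J \subseteq I$ with $|J| \leq n$ and noting that $\chi$ is monomial with respect to $F_J$ centered at (a point lying over) the deepest stratum, I conclude $\chi \in \QM_{Z_J}(\bar X, F_J) = C_J$ or, after enlarging $J$ to size $n$ (which is possible by the connectedness/irreducibility statements in \hyperref[(1a)]{(1a)} and inclusion of quasimonomial faces), $\chi \in C_{J,l}$ for the appropriate deep point $p_{J,l}$.

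The main obstacle I anticipate is the bookkeeping in the second inclusion: translating "$\chi$ is a monomial valuation in the $\mathbf{T}_I$-coordinates" into "$\chi \in \QM(\bar X, F_J)$ for a genuine SNC divisor $F_J$ built from the sections of $\mathcal{S}$." This requires that the relevant sections in $I$ actually cut out an SNC configuration near $c_{\bar X}(\chi)$ — which is exactly what conditions~\ref{tau-i}--\ref{tau-iii} from \hyperref[(1a)]{(1a)} were engineered to provide — and that the monomial structure of $\chi$ in the ambient torus coordinates matches the quasimonomial structure attached to the local equations $z_i$ of the $D_i$ via Cohen's structure theorem (Remark~\ref{rmk:equiv-def-qm}). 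A secondary subtlety is that a priori $\chi$ could be a valuation with center of dimension strictly less than $n - |J|$ if some of the chosen local equations were dependent, but the very-ampleness of $eL$ together with the Bertini argument in \hyperref[(1a)]{(1a)} rules this out; I would state this carefully as a lemma. Once these local-model identifications are in place, the equality $\bigcup_{C \in \mathcal{C}_n} C = \Sigma_\tau$ follows formally, and this sets up Proposition~\ref{prop:decomp} where the face relations among the $C_J$ are checked against Definition~\ref{def:cell-decomp}.
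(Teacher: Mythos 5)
Your overall reduction is the same as the paper's: both pass through Remark~\ref{rem:char-poly-by-proj} and aim to show, for each size-$(n+1)$ subset $I \subseteq \mathcal{S}$, that $\tau_I^{-1}(\sk(\mathbf{T}_I)) = \bigcup_{J \subseteq I, |J|=n} \QM(\bar{X}, F_J)$. The paper, however, isolates the entire content of both inclusions into a general statement (Lemma~\ref{lm:inverse-image-of-qm-val}): for a surjective morphism $f$ of smooth projective varieties of the same dimension with compatible SNC divisors, $(f^\na)^{-1}(\QM(X',D')) = \QM(X,D)$; this is then applied to $\tau_I \colon \bar{X} \to \mathbf{P}^I$ with $D' = V(z_0\cdots z_n)$, using Example~\ref{eg:ske-torus} to identify $\sk(\mathbf{T}_I)$ with $\QM(\mathbf{P}^n, V(z_0\cdots z_n))$. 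The proof of that lemma is where the real work happens, via the Jonsson--Musta\cb{t}\u{a} retraction, centers, and minimality comparisons in the tropical-spectrum partial order.

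There is a genuine gap in your forward inclusion. You conclude $C_{J,l} \subseteq \tau_I^{-1}(\sk(\mathbf{T}_I))$ from the fact that $\trop(\tau_I(\chi))$ lands in the orthant $\mathbf{R}^n_{\geq 0}$. But $\Sigma_\tau$ is the preimage of the skeleton $\sk(\mathbf{T}_I) \subseteq \mathbf{T}_I^\na$ under the map of Berkovich spaces, not the preimage of its tropical image under $\trop \circ \tau_I$; the latter set is enormously larger (e.g.\ $(\trop\circ\tau_I)^{-1}(N_{\mathbf{R}}(\mathbf{T}_I))$ is all of $U^\na$). Knowing the values $\tau_I(\chi)(z_i)$ only pins down the tropicalization; to show $\tau_I(\chi) \in \sk(\mathbf{T}_I)$ you must show it is the \emph{minimal} (monomial) semivaluation with those values, i.e.\ transfer the minimality defining $\chi \in \QM_{p_{J,l}}(\bar{X}, F_J)$ through the finite map $\tau_I$. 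That transfer is exactly the forward direction of Lemma~\ref{lm:inverse-image-of-qm-val}, proved in the paper by producing a retraction $\chi' \leq \tau_I^\na(\chi)$ in $\QM(\mathbf{P}^n, D')$ with the same values, lifting it upstairs, and invoking minimality of $\chi$ to force equality. Your reverse inclusion has the analogous issue — the claim that $c_{\bar{X}}(\chi)$ is the generic point of the full stratum and that $\chi$ is the minimal valuation with its prescribed values is precisely what needs proving, and you flag it as an "obstacle" without resolving it. So the skeleton of your argument is right and the conditions from Step~1(a) are indeed the correct inputs, but the minimality/center bookkeeping that you defer is the proof, not a technicality; without it neither inclusion is established.
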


\begin{proof}
    It suffices to show that, for any size $n+1$ subset $I$ of $\mathcal{S}$, there is an equality \begin{equation}\label{eq:cover}\tau_{I}^{-1} (\sk(\mathbf{T}_I)) = \bigcup_{J \subseteq I, |J| = n}\QM(\bar{X}, F_J).  \end{equation} By Example~\ref{eg:ske-torus}, the skeleton $\sk(\mathbf{T}_I)$ is $\QM(\mathbf{P}^n, V(z_0 \cdots z_n))$, where $\mathbf{T}_I = \mathbf{G}^I_m / \mathbf{G}_m$ embeds into $\mathbf{P}^I$ with homogeneous coordinates $z_0, \cdots, z_n$. Now the proposition follows directly from our choice of $\mathcal{S}$ and Lemma~\ref{lm:inverse-image-of-qm-val} below. \end{proof}
    
\begin{lemma}\label{lm:inverse-image-of-qm-val}
    Let $f\colon X \to X'$ be a surjective morphism of irreducible smooth projective varieties of the same dimension. Let $D'$ be a simple normal crossing divisor in $X'$ with $d$ irreducible components $D_1', \cdots, D_d'$ such that each $D_i  \coloneqq f^{-1}D_i'$ is an irreducible divisor on $X$ and $D \coloneqq D_1 + \cdots + D_d$ is also simple normal crossing, and moreover all positive-dimensional intersections among $D_1, \cdots, D_d$ are irreducible. Then $(f^\na)^{-1}(\QM(X', D')) = \QM(X, D).$
\end{lemma}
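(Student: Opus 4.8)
The plan is to prove the two inclusions $(f^\na)^{-1}(\QM(X',D')) \supseteq \QM(X,D)$ and $(f^\na)^{-1}(\QM(X',D')) \subseteq \QM(X,D)$ separately, working throughout with the tropical spectrum description of the Berkovich spaces and the explicit description of quasimonomial valuations from Definition~\ref{def:qm}. Since the intersection strata of $D'$ are in bijection with those of $D$ via $f$ (because $f^{-1}D_i' = D_i$ is irreducible and all positive-dimensional mutual intersections stay irreducible), I expect the dual complexes $\Delta(D)$ and $\Delta(D')$ to be canonically isomorphic, and the whole argument should amount to checking that $f^\na$ respects the affine coordinates $(\chi(D_1),\dots,\chi(D_d))$ on each cone $\QM_Z(X,D)$.

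For the inclusion $\supseteq$, I would start with $\chi_{a,\xi} \in \QM_{\eta}(X,D)$ for some stratum with generic point $\eta$, a chosen strictly-smaller-or-equal stratum $\xi$, and $a=(a_1,\dots,a_d)$, so that $\chi_{a,\xi}(D_i)=a_i$. Let $\xi' = f(\xi)$, which is the generic point of an intersection stratum of $D'$ since $f$ maps strata onto strata. Using the pullback formula for $f^\na$ from Remark~\ref{rmk:pullback-of-val} together with the center functoriality from Remark~\ref{rem:center}, I would show that $f^\na(\chi_{a,\xi})$ is centered at $\xi'$ and satisfies $f^\na(\chi_{a,\xi})(D_i') = \chi_{a,\xi}(f^{-1}D_i' \cdot \mathcal{O}_X) = \chi_{a,\xi}(D_i) = a_i$ — here I use that $f^{-1}D_i' \cdot \mathcal{O}_X = D_i$ as divisors, hence as ideal sheaves. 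It remains to check $f^\na(\chi_{a,\xi})$ is the \emph{minimal} such element in $\mathrm{TSpec}(\mathfrak{I}_{X'})$ centered at $\xi'$ with those values; this follows because $\chi_{a,\xi}$ is minimal with its properties and pullback of ideals is order-preserving and compatible with the minimality characterization (any $\nu' \leq f^\na(\chi_{a,\xi})$ with the right center and values pulls back to something $\leq \chi_{a,\xi}$, forcing equality after pushing forward). This identifies $f^\na(\chi_{a,\xi})$ with the corresponding quasimonomial valuation $\chi_{a,\xi'} \in \QM_{\xi'}(X',D') \subseteq \QM(X',D')$.

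For the reverse inclusion $\subseteq$, I would take $\nu \in X^\na$ with $f^\na(\nu) \in \QM(X',D')$, say $f^\na(\nu) = \chi_{a,\xi'}$ centered at the generic point $\xi'$ of a stratum of $D'$. First, $\nu \in X^{\val}$: since $\chi_{a,\xi'}$ is a valuation, it is finite on all nonzero ideals of $X'$, and one checks that $\nu$ inherits finiteness on nonzero ideals of $X$ using that $f$ is dominant (so every nonzero ideal on $X$ pulled back from... more carefully: use that $f$ is finite generically, or that a nonzero ideal $\mathfrak{a}$ on $X$ has $\nu(\mathfrak{a}) < \infty$ because $\nu$ induces a genuine valuation on $\C(X)$, which is a finite extension of $\C(X')$, and $f^\na(\nu)$ being a valuation means the valuation on $\C(X')$ is nontrivial on nonzero elements, hence so is its extension). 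Then the center $c_X(\nu)$ maps to $\xi'$ under $f$ by center functoriality, so it is the generic point of a stratum of $D$ (using irreducibility of $f^{-1}$ of strata to pin down \emph{which} one, or rather: $c_X(\nu)$ lies over $\xi'$, and I'd argue it must be a generic point of a stratum by the valuative structure). Setting $a_i \coloneqq \nu(D_i)$, one has $a_i = f^\na(\nu)(D_i') = \chi_{a,\xi'}(D_i') $, so $\nu$ agrees with the quasimonomial valuation $\chi_{a, c_X(\nu)} \in \QM(X,D)$ on the ideals $\mathcal{I}_{D_1},\dots,\mathcal{I}_{D_d}$ and has the same center; by the local structure of such valuations (Definition~\ref{def:qm}, minimality, or equivalently the Cohen structure theorem description of Remark~\ref{rmk:equiv-def-qm}) these data determine $\nu$ uniquely, giving $\nu = \chi_{a,c_X(\nu)} \in \QM(X,D)$.

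The main obstacle I anticipate is the reverse inclusion, specifically showing that $f^\na(\nu) \in \QM(X',D')$ forces $\nu$ itself to be quasimonomial — a priori $\nu$ could be some exotic valuation on $X$ whose pushforward happens to be quasimonomial. The clean way around this is to pass to a local analytic chart: at the center $c_X(\nu)$, étale locally $f$ looks like a map of SNC pairs in which the $D_i$ are coordinate hyperplanes (here the hypothesis that each $f^{-1}D_i'$ is irreducible reduced and $D$ is SNC is essential, ruling out ramification along the $D_i'$ that would split components or introduce multiplicities); then $\nu$ is determined by its values on the local coordinates cutting out the $D_i$, which are exactly $\nu(D_i) = a_i$, plus possibly values on coordinates transverse to $D$ — but since $f^\na(\nu)$ is \emph{monomial} with respect to the $D_i'$ coordinates and has no mass in transverse directions, neither does $\nu$. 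Making this étale-local reduction precise, and confirming that finiteness/monomiality transfers through $f$, is the technical heart of the proof; everything else is bookkeeping with the tropical spectrum axioms \eqref{trop-spec} and the functoriality remarks.
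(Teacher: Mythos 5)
Your overall architecture (two inclusions, tropical--spectrum description, minimality of quasimonomial valuations, functoriality of centers) matches the paper's, but both inclusions have genuine gaps at exactly the points where the paper has to work. Your opening premise is false: the strata of $D$ and $D'$ need not be in bijection and the dual complexes need not be isomorphic, because the hypothesis only forces \emph{positive-dimensional} strata of $D$ to be irreducible --- a zero-dimensional stratum of $D'$ may have a whole finite set of preimage strata, so $(f^\na)^{-1}(\QM(X',D'))$ is in general a union of several cones over each top cone of $\QM(X',D')$; this is precisely the situation in the application (Corollary~\ref{cor:cover}), where $\tau_I$ is a finite covering of the skeleton rather than a homeomorphism. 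In the forward inclusion, your minimality step is where the real content lies: to compare $f^\na(\chi_{a,\xi})$ with a competitor downstairs one must first retract it to some $\chi'\in\QM(X',D')$ with $\chi'\le f^\na(\chi_{a,\xi})$, then \emph{lift} $\chi'$ to $X$ using surjectivity of $f^\na$, and then ensure the lift is centered at the \emph{same} stratum as $\chi_{a,\xi}$ before minimality of $\chi_{a,\xi}$ applies; when the relevant stratum of $D$ is a reducible finite set of points the lift can land on the wrong point, and the paper must delete the other points and re-lift. Your parenthetical also has the inequality backwards: minimality of $\chi_{a,\xi}$ gives that the lift is $\ge\chi_{a,\xi}$, whence $\chi'\ge f^\na(\chi_{a,\xi})$, which is the inequality actually needed. (A smaller issue: in the intended reading $f^{-1}D_i'=a_iD_i$ with $a_i\ge 1$ --- in the application the sections are powers $s_i^{e/b_i}$ --- so $f^\na(\nu)(D_i')=a_i\nu(D_i)$, not $\nu(D_i)$.)

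For the reverse inclusion, the step ``these data determine $\nu$ uniquely'' is false, and you essentially concede this yourself: the center together with the values $\nu(D_i)$ single out only the \emph{minimal} element with those data, and a priori $\nu$ strictly dominates it. The proposed \'etale-local fix is not available as stated: $f$ is merely a surjective generically finite morphism, and nothing guarantees it looks like a monomial map of SNC pairs near $c_X(\nu)$ (which, moreover, need not be the generic point of a stratum a priori), so ``$f^\na(\nu)$ has no transverse mass, hence neither does $\nu$'' is an assertion rather than an argument. The paper's mechanism is order-theoretic and avoids any local model: retract $\nu$ to $\chi\in\QM(X,D)$ with $\chi\le\nu$, push forward to get $\chi'\le\nu'$ with equal centers and equal values on the $D_i'$, invoke minimality of $\nu'$ \emph{inside} $\QM(X',D')$ to get $\nu'\le\chi'$, hence $f^\na(\chi)=f^\na(\nu)$, and from this conclude $\nu\in\QM(X,D)$. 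You would need to supply this retraction-and-minimality mechanism (or a genuinely complete local argument); as written, the heart of both inclusions is missing.
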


\begin{remark}
    The lemma is false if $D$ and $D'$ do not have same number of irreducible components. For example, consider the blowup morphism $f\colon \mathrm{Bl}_p \mathbf{P}^2 \to \mathbf{P}^2$ with an exceptional divisor $E$, and $D' = \{x = 0\} \subseteq \mathbf{P}^2$. If we set $D = \widetilde{D} + E$, where $\widetilde{D}$ is the strict transform of $D$ and $E$ is the exceptional divisor, then $f(D) = D'$. But $\QM(\mathrm{Bl}_p \mathbf{P}^2, D) \simeq \R^2_{\geq 0}$ whereas $\QM(\mathbf{P}^2, D') \simeq \R_{\geq 0}$.
\end{remark}

\begin{proof}
For each $1 \leq i \leq d$, we have $f^{-1} D_i' = a_i D_i$ for some $a_i \in \mathbf{Z}_{\geq 1}$. Therefore, \begin{equation}\label{eq:val-of-image}f^\na(\nu)(D_i') = a_i \nu(D_i)\text{ for any $\nu \in X^\na$.}\end{equation}

We begin by showing that, if $\nu \in \QM(X, D)$ then $\nu' \coloneqq f^\na(\nu) \in \QM(X', D')$. For this we use the tropical spectrum description of the Berkovich analytification in \S \ref{sec:trop-spec} and \S \ref{sec:qm}, where quasimonomial valuations on $D$ are characterized as those satisfying a certain minimality condition.

By the retraction map in \cite[\S 4.3]{JM12}, there is some $\chi' \in \QM(X', D')$ such that $\nu' \geq \chi'$ and $\nu'(D_i') = \chi'(D_i')$ for all $1 \leq i \leq d$, and $c_{X'}(\nu') \in \overline{c_{X'}(\chi')}$. By definition, the center $c_X(\nu)$ is the generic point of some intersection stratum $Z$ of $D$. Then $Z' \coloneqq f(Z)$ is an intersection stratum of $D'$, with $f(c_X(\nu)) = c_{X'}(f^\na(\nu)) = c_{X'}(\nu')$ being its generic point. So we have $c_{X'}(\nu') = c_{X'}(\chi')$. Because $f$ is surjective, so is $f^\na$. Hence $(f^\na)^{-1}(\chi')$ is non-empty. Say $Z = \overline{c_X(\nu)}$ is contained in the components $D_{i_1}, \cdots, D_{i_k}$ of $D$, and also decompose $D_{i_1} \cap \cdots \cap D_{i_k} = Z_1 \sqcup \cdots \sqcup Z_j$ where all $Z_p$ are irreducible of the same dimension, and $Z_1 = Z$. 

By \eqref{eq:val-of-image}, for any $\chi \in (f^\na)^{-1}(\chi')$, we have $\chi(D_i) = \nu(D_i)$ for all $1 \leq i \leq d$. Moreover, it follows from Remark~\ref{rem:center} that, for any subscheme $E$ of $X$, if $\chi(E) > 0$ then $c_X(\chi) \in E$. So $c_X(\chi) \in D_{i_1} \cap \cdots \cap D_{i_k}$, and $\overline{c_X(\chi)} = Z_p$ for some $1 \leq p \leq j$. 

If $\dim Z \geq 1$, then by our assumption on $D$ we have $j = 1$. That is, $c_X(\chi) = Z_1 = Z = c_X(\nu)$. So $\chi \geq \nu$ by minimality of $\nu$, and so $\chi' \geq \nu'$. Combined with the ordering $\nu' \geq \chi'$ from before, we obtain $\nu' = \chi' \in \QM(X', D')$, as desired. 

Therefore, we may assume all $Z_1, \cdots, Z_j$ are smooth points, and suppose $c_X(\chi) = Z_p$ for some $p \neq 1$. Then $\xi' \coloneqq c_{X'}(\chi') = c_{X'}(\nu')$ is also a smooth point in $X'$. Let $X^{\circ} \coloneqq X \backslash \{Z_2, \cdots, Z_j\}$. Then $f$ restricts to a map $f^\circ\colon X^\circ \to X'$, which is still surjective but satisfies $(f^{\circ})^{-1}(\xi') = Z$. So by surjectivity of $(f^\circ)^\na$, there is some $\chi_1$ centered at $Z_1 = Z$ with $f^\na(\chi_1) = \chi'$. By minimality of $\nu$ we have $\chi_1 \geq \nu$ again. So $f^\na(\chi_1) = \chi' \geq \nu'$. Hence we have $\nu'= \chi' \in \QM(X',D')$ as desired. 

We now show the other inclusion: if $\nu' \coloneqq f^\na(\nu) \in \QM(X',D')$, then $\nu \in \QM(X, D)$. Let $D_{i_1}, \cdots , D_{i_k}$ be the components of $D$ that contain $c_X(\nu)$. Then $0< \nu(D_{i_1}), \cdots, \nu(D_{i_k}) < \infty$, and so we can find a unique $\chi \in \QM(X,D)$ with $c_X(\nu) \in \overline{c_X(\chi)}$ and satisfying $\nu(D_{i_m}) = \chi(D_{i_m})$ for $1 \leq m \leq k$. In particular, $\chi \leq \nu$. It follows that $\chi' = f^\na(\chi) \leq \nu'$, and \[c_{X'}(\nu') = f(c_X(\nu)) \in f(\overline{c_X(\chi)}) \subseteq \overline{f(c_X(\chi))} = \overline{c_{X'}(\chi')}.\] In addition, by \eqref{eq:val-of-image} we have $\chi'(D_{i_1}'), \cdots, \chi'(D_{i_k}') > 0$, so $c_{X'}(\chi') \in D_{i_1}' \cap \cdots \cap D_{i_k}'$. This implies $c_{X'}(\chi') \in \overline{c_{X'}(\nu')}$, and so $c_{X'}(\chi') = c_{X'}(\nu')$. By \eqref{eq:val-of-image} again, $\nu'(D_i) = \chi'(D_i)$ for all $i$, so by minimality of $\nu'$ we have $\nu' \leq \chi'$. We have shown before $\chi' \leq \nu'$, so $f^\na(\chi) = \chi' = \nu'\in \QM(X', D')$. Now by the other inclusion proven earlier, as $\chi \in \QM(X,D)$, we must have $\nu \in \QM(X',D')$.  \end{proof}

We have a simple corollary. As before, the skeleton $\mathrm{Sk}(\mathbf{T}_I)$ is the union of $n+1$ simplices $Q_i \coloneqq \QM_{e_i}(\mathbf{P}^n, V(z_0 \cdots z_n))$, where $\mathbf{T}_I \simeq \mathbf{G}^I_m / \mathbf{G}_m$ embeds into $\mathbf{P}^I = \mathbf{P}^n$ with homogeneous coordinates $z_0, \cdots z_n$, and $e_i$ is the point whose $i$-th coordinate is $1$ and all others are $0$. 

\begin{cor}\label{cor:cover}
    With the notation above, for any size $n+1$ subset $I$ of $\mathcal{S}$ and any $0 \leq i \leq n$, the set $(\tau_I^\trop)^{-1}(Q_i)$ is a finite union of simplicial cones $\QM_p(\bar{X}, F_J)$, where $p$ ranges over a subset of the depth-$n$ intersection strata of $F_J$ for $J \subseteq I$ with $|J| = n$. In addition, $\tau_I$ maps any such cone homeomorphically onto $Q_i$. So $\tau_I$ is a finite covering over the interior of $Q_i$. 
\end{cor}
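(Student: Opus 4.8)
The plan is to refine the set-level equality $\tau_I^{-1}(\mathrm{Sk}(\mathbf{T}_I)) = \bigcup_{J\subseteq I,\ |J|=n}\QM(\bar X,F_J)$ established in the proof of Proposition~\ref{prop:cover} down to a single maximal face $Q_i$, and then to read off the covering structure from an explicit affine description of $\tau_I$ on the resulting cones.

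I would begin by fixing the underlying morphism of varieties. Label the sections in $I$ as $s_{(0)},\dots,s_{(n)}$ to match the homogeneous coordinates $z_0,\dots,z_n$ on $\mathbf{P}^I$, and let $\tau_{I,0}\colon\bar X\to\mathbf{P}^I$ be the morphism they define, so that $\tau_I$ is the analytification of $\tau_{I,0}|_U$. It is a morphism everywhere since any $n+1$ members of $\mathcal{S}$ have empty common zero locus by \eqref{tau-iii}; it contracts no curve because $\tau_{I,0}^{*}\mathcal{O}(1)=eL$ is ample, hence is quasi-finite, hence finite (being proper), hence surjective onto $\mathbf{P}^I$ (a finite morphism from an irreducible $n$-fold has $n$-dimensional image). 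Writing $E_{(j)}\coloneqq\mathrm{div}(s_{(j)})_{\mathrm{red}}$, we have $\tau_{I,0}^{-1}\{z_j=0\}=\mathrm{div}(s_{(j)})=a_jE_{(j)}$ with $a_j\in\{1\}\cup\{e/b_k:1\le k\le d\}$, and $F_I\coloneqq\sum_{j=0}^n E_{(j)}$ is SNC with exactly these $n+1$ components, all of whose positive-dimensional intersections are irreducible by \eqref{tau-iii}. Thus Lemma~\ref{lm:inverse-image-of-qm-val} applies to $\tau_{I,0}$ with $D'=V(z_0\cdots z_n)$, and together with $\mathrm{Sk}(\mathbf{T}_I)=\QM(\mathbf{P}^I,V(z_0\cdots z_n))$ it gives $\tau_I^{-1}(\mathrm{Sk}(\mathbf{T}_I))=\QM(\bar X,F_I)$. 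Since $F_I$ has $n+1$ components on an $n$-fold, its deepest strata are the $n$-fold intersections, indexed by the subsets $J_i\coloneqq I\setminus\{s_{(i)}\}$, and at any such stratum only the $n$ components $E_{(j)}$ with $j\neq i$ pass through, so $\QM(\bar X,F_I)=\bigcup_{i=0}^n\bigcup_l C_{J_i,l}$ in the notation of Definition~\ref{def:cover-of-char-poly}.

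Next I would match the face $Q_i$ with the cones lying over it using centers. As $Q_i=\QM_{e_i}(\mathbf{P}^I,V(z_0\cdots z_n))$ and $e_i$ is the coordinate point $\bigcap_{j\neq i}\{z_j=0\}$, functoriality of the center (Remark~\ref{rem:center}) shows that a cone $\QM_p(\bar X,F_J)$ maps into $Q_i$ exactly when $\tau_{I,0}(p)=e_i$, i.e. when $p\in\tau_{I,0}^{-1}(e_i)=\bigcap_{j\neq i}E_{(j)}$, which is precisely the depth-$n$ stratum $\{p_{J_i,l}\}_l$ of $F_{J_i}$. For any other size-$n$ subset $J\subseteq I$ one has $J\cup J_i=I$, hence $\bigcap_{s\in J}E_s\cap\bigcap_{s\in J_i}E_s=\bigcap_{s\in I}\mathrm{div}(s)_{\mathrm{red}}=\emptyset$ by \eqref{tau-iii}; so no cone $C_{J,l}$ with $J\neq J_i$ meets $(\tau_I^\trop)^{-1}(Q_i)$, and we obtain $(\tau_I^\trop)^{-1}(Q_i)=\bigcup_l C_{J_i,l}$, which is the first assertion.

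Finally, for the homeomorphism and covering statements I would work in affine coordinates. Since $\bigcap_{s\in I}\mathrm{div}(s)_{\mathrm{red}}=\emptyset$, the component $E_{(i)}$ misses $p_{J_i,l}$, so $C_{J_i,l}\cong\mathbf{R}^n_{\ge0}$ via $\nu\mapsto(\nu(E_{(j)}))_{j\neq i}$, while $Q_i\cong\mathbf{R}^n_{\ge0}$ via $\mu\mapsto(\mu(z_j))_{j\neq i}$ (Example~\ref{eg:qm-for-torus}). From $\tau_{I,0}^{-1}\{z_j=0\}=a_jE_{(j)}$ one computes that $\tau_I$ acts in these coordinates as $(x_j)_{j\neq i}\mapsto(a_jx_j)_{j\neq i}$, a positive diagonal isomorphism of $\mathbf{R}^n_{\ge0}$; hence $\tau_I$ carries each $C_{J_i,l}$ homeomorphically onto $Q_i$. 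The cones $C_{J_i,l}$ have pairwise distinct centers and are therefore disjoint, so over the relative interior $Q_i^\circ$ the map $\tau_I$ is a disjoint union of finitely many copies of this homeomorphism, i.e. a finite covering. The only input beyond bookkeeping is the finiteness of $\tau_{I,0}$ together with Lemma~\ref{lm:inverse-image-of-qm-val}; once $\tau_I^{-1}(\mathrm{Sk}(\mathbf{T}_I))=\QM(\bar X,F_I)$ is in hand, the per-face statements reduce to the short center computation and the linear coordinate description above, so I anticipate no serious obstacle.
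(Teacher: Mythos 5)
Your proof is correct and follows essentially the same route as the paper's: both rest on the equality $\tau_I^{-1}(\sk(\mathbf{T}_I)) = \bigcup_{J\subseteq I,\ |J|=n}\QM(\bar X, F_J)$ obtained from Lemma~\ref{lm:inverse-image-of-qm-val}, with your version usefully making explicit what the paper leaves implicit (finiteness and surjectivity of $\tau_{I,0}$, the identification via centers of exactly which cones lie over $Q_i$, and the diagonal coordinate description of $\tau_I$ on each cone). One small slip: the cones $C_{J_i,l}$ are not pairwise disjoint (they share boundary faces, e.g.\ the trivial valuation and any face corresponding to a stratum of $F_{J_i}$ containing more than one of the points $p_{J_i,l}$); only their relative interiors are disjoint, which is all that the covering claim over the interior of $Q_i$ actually requires.
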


\begin{proof}
    By construction, for any $J \subseteq I$ of size $n$, the divisor $F_J$ is a finite union of smooth points $\{p_{J, l}\}_l$. By the forward inclusion of (\ref{eq:cover}) we have \[\tau_I^{-1}(Q_i) \subseteq \bigcup_{J \subseteq I, |J| = n} \QM(\bar{X}, F_J) = \bigcup_{p \in \{p_{J,l}\}_{J,l}} \QM_p(\bar{X}, F_J).\] On the other hand, from the backward inclusion of (\ref{eq:cover}), for any $p \in \{p_{J,l}\}_{J,l}$, the map $\tau_I$ sends $\QM_p(\bar{X}, F_J)$ homeomorphically onto $Q_i$ for some $i$. So $\tau^{-1}_I(Q_i)$ is a union of $\QM_p(\bar{X}, F_J)$ for a subset of $\{(p,J): p \in \{p_{J,l}\}_{J,l}, J \subseteq I, |J| = n\}$, as desired. 
\end{proof}

\begin{proposition}\label{prop:decomp}
    The collection $\mathcal{C}$ is a cellular decomposition of $\Sigma_\tau$ in the sense of Definition~\ref{def:cell-decomp}. 
\end{proposition}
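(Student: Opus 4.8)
The goal is to check the four conditions (\ref{dcp-i})--(\ref{dcp-iv}) of Definition~\ref{def:cell-decomp}. Write $E_s \coloneqq \mathrm{div}(s)_{\mathrm{red}}$ for $s \in \mathcal{S}$, so $F_J = \sum_{s \in J} E_s$; by the genericity arranged in \hyperref[(1a)]{(1a)}, for any $J \subseteq \mathcal{S}$ the intersection $\bigcap_{s \in J} E_s$ has codimension exactly $|J|$, is irreducible when $|J| < n$, and is a reduced finite set of points when $|J| = n$. This additivity of codimension under intersection is the combinatorial input that makes everything fit together, and the proof splits into condition (\ref{dcp-i}), which is essentially a repackaging of Corollary~\ref{cor:cover}, and conditions (\ref{dcp-ii})--(\ref{dcp-iv}), which follow from an explicit description of the face poset of $\mathcal{C}$.

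For (\ref{dcp-i}): the maximal cells are the $n$-dimensional ones $C_{J,l}\cong\mathbf{R}^n_{\geq 0}$, and they cover $\Sigma_\tau$ by Proposition~\ref{prop:cover}. Fix $C = C_{J,l}$ and a size-$(n+1)$ subset $I \subseteq \mathcal{S}$ with $J \subseteq I$. Since $\tau_I$ factors through $\tau$ by a coordinate projection, $\tau_I^\trop = \pi_I^\trop \circ \tau^\trop$ with $\pi_I^\trop\colon \NRT \to N_{\mathbf{R}}(\mathbf{T}_I)$ linear, and by Corollary~\ref{cor:cover} the map $\tau_I^\trop$ sends $C$ homeomorphically onto one of the cones $Q_i$. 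Hence $\tau^\trop|_C$ is injective; writing $\sigma \coloneqq \tau^\trop(C)$, its inverse $\sigma \to C$ is $(\tau_I^\trop|_C)^{-1}\circ \pi_I^\trop|_\sigma$, a composite of continuous maps, so $\tau^\trop$ maps $C$ homeomorphically onto $\sigma$. For the covering statement, $(\tau^\trop|_{\Sigma_\tau})^{-1}(\sigma^\circ) \subseteq (\tau_I^\trop)^{-1}(Q_i^\circ)$, which by Corollary~\ref{cor:cover} is a finite union of relative interiors of maximal cells mapping finitely onto $Q_i^\circ$; together with the homeomorphism property above and the disjointness of relative interiors of distinct cells (immediate from the face description below), this presents $\tau^\trop\colon (\tau^\trop)^{-1}(\sigma^\circ) \to \sigma^\circ$ as a finite covering.

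The remaining conditions rest on the following face description: for $J' \subseteq J$, the face of $C_{J,l}$ (resp.\ of $C_J$) on which the coordinates indexed by $J \setminus J'$ vanish is exactly the cell $C_{J'}$. Indeed a valuation on that face is quasimonomial in the $E_s$ with $s$ in its support $\subseteq J'$, so it is centered on a stratum of $F_{J'}$; conversely every stratum of $F_{J'}$ contains $\bigcap_{s\in J'}E_s = Z_{J'}$ in its closure, which (being irreducible of codimension $|J'| < n$) contains $p_{J,l}$, so the face is all of $\QM(\bar X, F_{J'}) = C_{J'}$. Consequently every proper face of a cell of $\mathcal{C}$ is again in $\mathcal{C}$ — it is convenient to also count the cone point $\{\mathrm{triv}\}$ as the degenerate cell $C_\emptyset$ — which gives (\ref{dcp-iii}); and an inclusion $C \subseteq D$ of cells must realize $C$ as such a face, giving (\ref{dcp-iv}). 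The only inclusion not covered by this is $C_{J_1,l_1}\subseteq C_{J_2,l_2}$ with both $n$-dimensional: comparing the monomial valuations $\mathrm{ord}_{E_s}$ that span the edge rays forces $J_1 \subseteq J_2$, hence $J_1 = J_2$, and then $l_1 = l_2$ since distinct points $p_{J,l}$ give cones meeting only in the cone point.

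Finally, for (\ref{dcp-ii}) I claim $C_{J_1,l_1}\cap C_{J_2,l_2} = \bigcup_{J' \subseteq J_1\cap J_2} C_{J'}$, the reverse inclusion being immediate from the face description. For the forward inclusion, a valuation $\chi$ in the intersection has a well-defined center $\xi$, and as a point of $C_{J_1,l_1}$ it is quasimonomial in exactly the components $\{E_s : s \in S\}$ of $F_{J_1}$ through $\xi$, where $S = \{s \in \mathcal{S} : \xi \subseteq E_s\}$; thus $\xi$ is simultaneously a component of $\bigcap_{s \in J_1\cap S}E_s$ and of $\bigcap_{s\in J_2\cap S}E_s$, forcing $|J_1\cap S| = |J_2\cap S| = \operatorname{codim}\xi$, and then additivity of codimension applied to $\bigcap_{s\in (J_1\cup J_2)\cap S}E_s = \xi$ forces $J_1 \cap S = J_2 \cap S \eqqcolon J' \subseteq J_1 \cap J_2$, whence $\chi \in C_{J'}$. (When $\operatorname{codim}\xi = n$ this degenerates to $C_{J_1,l_1} = C_{J_2,l_2}$.) I expect this matching of the set-theoretic intersection of two maximal cones with the union of their common faces to be the main obstacle: it is the one place where the genericity hypotheses of \hyperref[(1a)]{(1a)} — specifically that transversal intersections of members of $\mathcal{S}$ have the expected codimension — are genuinely needed, to rule out a valuation that is quasimonomial "in too many directions" relative to one of the two divisors. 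Once it is in place, conditions (\ref{dcp-iii}) and (\ref{dcp-iv}) are bookkeeping with the same face description, and (\ref{dcp-i}) is a repackaging of Corollary~\ref{cor:cover}.
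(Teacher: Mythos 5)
Your proof is correct and follows the same route as the paper: condition (\ref{dcp-i}) is deduced from Proposition~\ref{prop:cover} and Corollary~\ref{cor:cover} via the factorization $\tau_I^\trop = \pi_I^\trop\circ\tau^\trop$, and conditions (\ref{dcp-ii})--(\ref{dcp-iv}) from the explicit intersection formula $C_{J_1,l_1}\cap C_{J_2,l_2} = \bigcup_{J'\subseteq J_1\cap J_2}C_{J'}$, which is exactly the formula the paper records. The only difference is that you actually carry out the "straightforward combinatorial exercise" the paper waves at, using additivity of codimension from the genericity of $\mathcal{S}$ (and you sensibly adjoin the cone point as a degenerate cell, which the paper's formula implicitly needs when $J_1\cap J_2=\emptyset$).
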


\begin{proof}
    Condition (\ref{dcp-i}) in Definition~\ref{def:cell-decomp} follows from Proposition~\ref{prop:cover} and Corollary~\ref{cor:cover}. 
    It is a straightforward combinatorial exercise to check (\ref{dcp-ii}) and (\ref{dcp-iii}). Indeed, for any two cells $C_I \neq C_J \in \mathcal{C}$, if $|I|, |J| < n$, then $C_I \cap C_J$ equals $C_{I \cap J}$. If $|I| = n$ and $|J| < n$, then $C_{I,l} \cap C_J = C_{I \cap J}$. If $|I|, |J| = n$ and $I \neq J$, then $C_{I,l} \cap C_{J,l'} = \bigcup_{I' \subseteq I \cap J,\ 1 \leq |I'| \leq n-1} C_{I'}$. This shows (\ref{dcp-ii}). The proof for (\ref{dcp-iii}) is similar, and (\ref{dcp-iv}) is clear. 
\end{proof}

Recall from \S \ref{sec:essential-ske} that the essential skeleton $\sk(X)$ is $\QM(\bar{X}, D) \subseteq U^\na \subseteq X^\na$. Observe that $\sk(X)$ is contained in $\Sigma_\tau$ by Lemma~\ref{lm:inverse-image-of-qm-val}. Let $\Delta$ denote the image $\tau^\trop(\sk(X)) \subseteq \NRT = \R^{N+1} / \mathbf{R}$. It can be described explicitly as
\[\Delta = \{x_0 = \cdots = x_m \leq x_{m+1}, \cdots, x_{m+d}\}/ \mathbf{R},\] where the $x_i$ are homogeneous coordinates on $\mathbf{R}^{N+1}$.

Finally, with this cellular decomposition in hand, we specify the constant $c$ appearing in Theorem~\ref{thm:CTY} as \begin{equation}\label{constant}
    c = \frac{(n-d)!}{\#\{C \in \mathcal{C}_n: \sk(X) \subseteq C\}}.
\end{equation}

\subsection*{Step 3: Rewriting the NA MA equation}\label{step3} \hfill

In this short step, we use the cellular decomposition constructed in Step \hyperref[step2]{2} to rewrite the non-Archimedean Monge--Ampère equation \begin{equation}\label{eq:NA-MA-2}\NAMA(\psi^\na|_{U^\na}) = \mu^\na \text{  on $U^\na$}\tag{\ref{eq:NA-MA}}\end{equation} as a real Monge--Ampère equation, where $\psi|_{U^\na}$ is the candidate solution tropicalized by the moment map $\tau$ constructed in Step \hyperref[step1]{1}, $U$ is the Zariski open subset $\{s_0', \cdots, s_m' \neq 0\}$ of $X$, and $\mu^\na$ is the Lebesgue measure on the essential skeleton $\sk(X) \subseteq U^\na \subseteq X^\na$. 

Let $v_k\colon \NRT \to \mathbf{R}$ be smooth convex functions converging to $v$ as $k \to \infty$, and set $\psi_k \coloneqq (\tau^\trop)^* v_k$. For any cell $C \in \mathcal{C}_n$, the image $\sigma = \tau^\trop(C)$ is isomorphic to $\mathbf{R}^n_{\geq 0}$ by Definition~\ref{def:cover-of-char-poly}. Let $\mathbf{L}_\sigma$ denote the $n$-dimensional affine subspace of $\NRT$ spanned by $\sigma$. Because $v_k$ is smooth, by Definition~\ref{def:measure-on-each-cell} the non-Archimedean Monge--Ampère measure of $\psi_k$ is \[\NAMA(\psi_k)  = \sum_{\sigma} \mu_{\sigma, v_k},\] where the sum runs over $\sigma = \tau^\trop(C)$ for all $C \in \mathcal{C}_n$ and \[\mu_{\sigma, v_k} = (\tau^\trop|_C^{-1})_* \mathbf{1}_{\sigma} \RMA(v_k|_{\mathbf{L}_{\sigma}}).\]

By construction $v$ is not $C^2$. By continuity of the non-Archimedean Monge--Ampère operator, the measure $\NAMA(\psi^\na)$ is the weak limit of $\NAMA(\psi_k)$. Hence, the equation (\ref{eq:NA-MA}) is equivalent to the real Monge--Ampère equation,
\begin{equation}\label{eq:NA-MA-update}\lim_{k \to \infty} \sum_{\sigma} \mathbf{1}_\sigma \RMA(v_k|_{\mathbf{L}_\sigma}) = \mathrm{Leb}_{\Delta},\end{equation} where $\mathrm{Leb}_{\Delta}$ is the Lebesgue measure on $\Delta = \tau^\trop(\sk(X))$,  which exists because $\NRT$ has a natural integral affine structure. 

As already discussed in \S \ref{sec:comparsion}, a difficulty in solving \eqref{eq:NA-MA-update} is the presence of the indicator function $\mathbf{1}_\sigma$. Indeed, because $\sigma$ is closed in $\mathbf{L}_\sigma$, it is hard to pass from weak convergence of Monge--Ampère measures on $\mathbf{L}_\sigma$ to that on $\sigma$. We will deal with this problem in Step \hyperref[step5]{5}, and in Step \hyperref[step4]{4} establish a simpler equation to be used later. 
\subsection*{Step 4: A simpler equation via a volume calculation}\label{step4}\hfill

Instead of (\ref{eq:NA-MA-update}), the goal of this step is to prove a simpler relation,\begin{equation}\label{step4:main-eq}
    \sum_{C \in \mathcal{C}_n} \RMA(v|_{\mathbf{L}_{\sigma}}) = \Leb_{\Delta}\end{equation}
where $\sigma = \tau^\trop(C)$ and $\mathbf{L}_\sigma$ is the $n$-dimensional affine space spanned by $\sigma$ in $\NRT = \mathbf{R}^{N+1} / \mathbf{R}$. The operator $\RMA(-)$ is the Alexandrov Monge--Ampère operator, and $v\colon \NRT \to \mathbf{R}$ is the convex function defined as \[v = \max_{0 \leq j \leq m} u(x_{m+1} - x_j, \cdots, x_{m+d} - x_j).\] We will use (\ref{step4:main-eq}) to prove (\ref{eq:NA-MA-update}) in Step \hyperref[step5]{5} after constructing a suitable smoothing of $v$.

The key in this step is to interpret (\ref{step4:main-eq}) as a calculation of the volume of certain subgradient polytopes, where the maximum expression in the trivial metric will play an important role. In fact, we will prove a more precise result which implies (\ref{step4:main-eq}).

\begin{proposition}\label{prop:step4-cases}
    Let $c$ be the positive constant specified in (\ref{constant}). We have \[\RMA(v|_{\mathbf{L}_\sigma}) = \begin{cases}
        c \cdot (n-d)!^{-1} \cdot \mathrm{Leb}_{\Delta} \, & \text{ if } \Delta \subseteq \sigma,\\
        0 & \text{ otherwise}.
    \end{cases} \]
\end{proposition}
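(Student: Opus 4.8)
The plan is to reduce the statement, in both cases, to an explicit computation of the Alexandrov measure of the restriction $v|_{\mathbf{L}_\sigma}$ as a family of subgradient polytopes, and then to invoke the PDE and boundary condition of Theorem~\ref{thm:CTY} together with the extension of Proposition~\ref{prop:extension}; the factor $(n-d)!^{-1}$ will appear as the volume of a standard simplex. First I would determine which cells satisfy $\Delta\subseteq\sigma$. Write a top cell as $C=C_{J,l}=\QM_{p_{J,l}}(\bar X,F_J)$ with $J=\{s_i^{e/b_i}:i\in B\}\cup\{s_j':j\in A\}$, $|A|+|B|=n$. The $n$ components of $F_J$ are the $D_i$, $i\in B$, together with the reduced divisors $\operatorname{div}(s_j')$, $j\in A$, and by property (ii) of \hyperref[(1a)]{(1a)} the point $p_{J,l}$ lies on none of the $\operatorname{div}(s)$, $s\in\mathcal S\setminus J$. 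Hence any $\nu\in C$ has $\nu(s_j')=0$ for $j\notin A$ and $\nu(s_i)=0$ for $i\notin B$, so $\mathbf{L}_\sigma$ is, modulo the diagonal $\R$, the rational coordinate subspace of $\NRT$ spanned by the directions indexed by $J$, with integral-affine coordinates $r=(x_j)_{j\in A}$ and $w=(x_{m+i})_{i\in B}$ read off from $N\cap\mathbf{L}_\sigma$. Comparing with $\Delta=\{x_0=\cdots=x_m\le x_{m+1},\dots,x_{m+d}\}/\R$ then shows $\Delta\subseteq\sigma$ iff $B=\{1,\dots,d\}$, i.e.\ exactly for the $\#\{C\in\mathcal{C}_n:\sk(X)\subseteq C\}$ cells of \eqref{constant} — equivalently $\sk(X)=\QM_Z(\bar X,D)\subseteq C$, since a valuation monomial at the generic point of $Z$ stays monomial at any depth-$n$ stratum point of $F_J$ lying on $Z$.

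\textbf{Restricting $v$ and locating its Monge--Amp\`{e}re mass.} On $\mathbf{L}_\sigma$ every inactive coordinate vanishes, so $v|_{\mathbf{L}_\sigma}(r,w)=\max_{0\le j\le m}u(y-x_j\mathbf 1)$, where $y\in\R^d$ has $i$-th entry $w_i$ for $i\in B$ and $0$ otherwise; since $u$ increases diagonally (Proposition~\ref{prop:extension}) the maximum is reached at $x_j=\min(0,\min_{j\in A}x_j)$, so $v|_{\mathbf{L}_\sigma}(r,w)=u(y+\rho(r)\mathbf 1)$ with $\rho(r):=\max(0,\max_{j\in A}(-x_j))\ge 0$. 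As $u$ is $C^1$, a chain-rule computation of subgradients gives, at any $(r_0,w_0)$,
\[
\partial\big(v|_{\mathbf{L}_\sigma}\big)(r_0,w_0)=\big\{\,\big(\,s\,q,\ \pi(\nabla u(y_0+\rho(r_0)\mathbf 1))\,\big)\ :\ q\in\partial\rho(r_0)\,\big\},
\]
where $s:=\sum_{i=1}^d\partial_i u(y_0+\rho(r_0)\mathbf 1)$ and $\pi$ is the projection onto the active $w$-slots. This polytope is $n$-dimensional only where $\partial\rho$ is $|A|$-dimensional, which occurs precisely at $r_0=0$; there $\partial\rho(0)=\Sigma_0:=\operatorname{conv}(\{0\}\cup\{-e_j:j\in A\})$, of volume $|A|!^{-1}$. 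Consequently $\RMA(v|_{\mathbf{L}_\sigma})$ is carried by $\{r=0\}$.

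\textbf{Evaluating the mass via the PDE.} Suppose first $\Delta\subseteq\sigma$, so $|A|=n-d$, $\pi=\mathrm{id}$ and $y=w$ runs over $\R^d$. Parametrizing $\bigcup_{w\in W}\partial(v|_{\mathbf{L}_\sigma})(0,w)$ by $(w,q)$ and using $s=\sum_i(\nabla u(w))_i$, Fubini gives
\[
\RMA(v|_{\mathbf{L}_\sigma})(\{0\}\times W)=\frac{1}{(n-d)!}\int_{\nabla u(W)}\Big(\sum_i p_i\Big)^{n-d}\,dp .
\]
Changing variables back by $p=\nabla u(w)$, the PDE of Theorem~\ref{thm:CTY} makes the integrand equal to $c$ on $\R^d_{>0}$; the boundary condition $\sum_i\partial_i u=0$ on $\partial\R^d_{\ge 0}$ and the fact that the extension $\tilde u$ of Proposition~\ref{prop:extension} (built in Appendix~\hyperref[appen:A]{A}) carries no Alexandrov mass outside $\R^d_{\ge 0}$ show the remaining part of $\nabla u(W)$ is Lebesgue-null. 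Since $\Leb_\Delta(\{0\}\times W)=|W\cap\R^d_{\ge 0}|$, we get $\RMA(v|_{\mathbf{L}_\sigma})=c\,(n-d)!^{-1}\Leb_\Delta$. If instead $\Delta\not\subseteq\sigma$, then $B^c\ne\emptyset$, so at $r_0=0$ the argument $y_0$ lies in the proper coordinate subspace $\{\xi_j=0:j\in B^c\}$, on which $\sum_i\partial_i\tilde u=0$ — inside $\R^d_{\ge 0}$ this is Theorem~\ref{thm:CTY}, and on the rest it is guaranteed by the construction of $\tilde u$ in Appendix~\hyperref[appen:A]{A}. Hence $s=0$, so $\partial(v|_{\mathbf{L}_\sigma})(0,w_0)=\{0\}\times\{\pi(\nabla u(y_0))\}$ has dimension $<n$; combined with the previous step this gives $\RMA(v|_{\mathbf{L}_\sigma})=0$.

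\textbf{The main difficulty.} The combinatorial identification of cells and the subgradient bookkeeping are routine; the crux is the last step — ruling out spurious Alexandrov mass of $v|_{\mathbf{L}_\sigma}$ on $\partial\R^d_{\ge 0}$ and on $\R^d\setminus\R^d_{\ge 0}$. This is exactly where the boundary condition $\sum_i\partial u/\partial t_i=0$ of Theorem~\ref{thm:CTY} and the precise homogeneous extension of Appendix~\hyperref[appen:A]{A} are needed to exclude singular charges of the real Monge--Amp\`{e}re measure of $v$.
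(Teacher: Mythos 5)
Your overall route is the same as the paper's: identify the top cells with $\Delta\subseteq\sigma$ combinatorially, write $v|_{\mathbf{L}_\sigma}$ as a maximum of $u$ composed with affine maps, compute the subgradient polytopes (a simplex of volume $(n-d)!^{-1}\,(\sum_i\partial_iu)^{n-d}$ times a point), kill the mass where $\mathbf{L}_\sigma\not\supseteq\Delta$ via the boundary condition $\sum_i\partial_iu=0$, and convert the density on $\Delta$ into the constant $c$ by the change of variables $p=\nabla u(w)$ and the PDE of Theorem~\ref{thm:CTY}. Your repackaging of the maximum through the single piecewise-linear function $\rho(r)=\max(0,\max_{j\in A}(-x_j))$ is a tidy equivalent of the paper's functions $U_k,U$, and your Fubini computation of $\RMA(v|_{\mathbf{L}_\sigma})(\{0\}\times W)$ is exactly Lemma~\ref{lem:mu_ac}. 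One small point to tighten: ``each subgradient polytope off $\{r=0\}$ is degenerate'' does not by itself give zero measure for the union; you need (as in the paper's Cases 1 and 2) that near any point of $\{r\neq0\}$ some affine piece of $\rho$ is inactive on a whole neighborhood, so that all subgradients over that neighborhood lie in one \emph{common} hyperplane.

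The substantive gap is your appeal to ``the fact that the extension $\tilde u$ of Proposition~\ref{prop:extension} carries no Alexandrov mass outside $\R^d_{\geq0}$,'' and likewise to $\sum_i\partial_i\tilde u=0$ on the parts of the coordinate subspaces lying outside $\R^d_{\geq0}$. Neither statement appears in Proposition~\ref{prop:extension} or in Appendix~A, and your argument genuinely needs them: without them, the integral $\int_{\nabla u(W)}(\sum_ip_i)^{n-d}dp$ could be positive for $W$ disjoint from $\R^d_{\geq0}$, which would put mass on $\{0\}\times(\R^d\setminus\R^d_{\geq0})\not\subseteq\Delta$ and falsify the proposition. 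The paper does not prove these properties either; instead it localizes the support \emph{before} integrating, via Corollary~\ref{cor:supp-RMA}: the measure can only live where all of $U_0,\dots,U_{n-r-1},U$ are simultaneously active, and this is argued to force the point into $\Delta$ itself, after which only $\mathrm{relint}(\Delta)$ and $\partial\Delta$ need to be handled. To close your proof you should either reproduce that active-locus containment or actually establish the vanishing of $\RMA(\tilde u)$ (equivalently, the Lebesgue-nullity of $\nabla\tilde u(\R^d\setminus\R^d_{\geq0})$) from the construction of the extension; as written, this step is asserted rather than proved, and it is precisely the step your own closing paragraph identifies as the crux.
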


\begin{convention}\label{convention:step4}\hfill
\begin{itemize}[leftmargin=*]
    \item To ease notation, we fix any size $n$ subset $J$ of the set $\mathcal{S}$ of sections constructed in Step~\hyperref[step1]{1}, and let $C$ be the $n$-dimensional cell $C_{J,l} \in \mathcal{C}_n$ for any $l$ (see Definition~\ref{def:cover-of-char-poly}). We use $\mathbf{L}$ to denote $\mathbf{L}_{\sigma}$, the $n$-dimensional affine space spanned by $\sigma = \tau^\trop(C)$.  
    \item Since $m \geq n$, there is some $0 \leq j \leq m$ such that $\chi(s_j') = 0$ for any $\chi \in C$. Note that $s_j' \not\in J$. Let $I$ be $J \cup \{s_j'\}$, and let $\mathbf{R}^{N+1} / \mathbf{R} \twoheadrightarrow \mathbf{R}^I / \mathbf{R}$ be the projection map. The composition \[\mathbf{L} \hookrightarrow \NRT = \mathbf{R}^{N+1} / \mathbf{R} \twoheadrightarrow \mathbf{R}^I / \mathbf{R}\] is an isomorphism, fixed from now on. 
    \item Let $x_0, \cdots, x_N$ be homogeneous coordinates on $\mathbf{R}^{N+1}$. To avoid dealing with the equivalence relation on $\NRT = \mathbf{R}^{N+1} / \mathbf{R}$, for any $0 \leq i \neq j \leq N$, we define an affine function $X_i \coloneqq x_i - x_j$ on $\NRT$. Then $(X_i)_i$ are affine coordinates on $\NRT$, and $(X_i)_{i \in J}$ are affine coordinates of $\mathbf{L} \simeq \mathbf{R}^I / \mathbf{R}$. Note that if $i \not\in J$, then $X_i = 0$ on $\mathbf{L}$. 
    \item In these coordinates, we can write \begin{equation}\label{eq:delta-in-coord}\Delta = \tau^\trop(\sk(X)) = \{X_0 = \cdots = X_m = 0 \leq X_{m+1}, \cdots, X_{m+d}\} \subseteq \NRT.\end{equation}
    \item After permuting, we can write $J = \{0, \cdots, n-r-1, m+1, m+r\}$ for some fixed number $r \in [0,d]$. Then $X_0, \cdots, X_{n-r-1}, X_{m+1}, \cdots, X_{m+r}$ are coordinates on $\mathbf{L}$, and the gradient operator $\nabla$ on $\mathbf{L}$ is taken with respect this choice of coordinates.
\end{itemize}
\end{convention}

\noindent For each $0 \leq k \leq n-r-1$, define an affine map $L_k\colon\mathbf{L} \to \mathbf{R}^d$ by, \[L_k \coloneqq (\underbrace{X_{m+1} - X_k, \cdots, X_{m+r} - X_k}_r, \underbrace{-X_k, \cdots, -X_k}_{d-r}),\] and define another affine map $L \colon \mathbf{L} \to \mathbf{R}^d$ by, \[L \coloneqq (\underbrace{X_{m+1}, \cdots, X_{m+r}}_r, \underbrace{0, \cdots, 0}_{d-r}).\] Then, let $u\colon \mathbf{R}^d \to \mathbf{R}$ be the convex function from Proposition~\ref{prop:extension}, which extends the PDE solution in Theorem~\ref{thm:CTY} and is differentiable. Define differentiable functions $U_k, U\colon \mathbf{L} \to \mathbf{R}$ for $0 \leq k \leq n-r-1$ by \[U_k \coloneqq u \circ L_k\text{ and }U \coloneqq u \circ L.\] With this, we can write the restriction of $v$ to $\mathbf{L}$ simply as \begin{align*}
    v|_{\mathbf{L}}(x) =& \max_{0 \leq k \leq m} u(x_{m+1} - x_k, \cdots, x_{m+d} - x_k)|_{\mathbf{L}}\\
    =&\max_{0 \leq k \leq n-r-1} \{U_k, U\}.
\end{align*}
Fix any point $p \in \mathbf{L}$. To prove Proposition~\ref{prop:step4-cases}, we need to compute the subgradient polytope of $v|_{\mathbf{L}}$ at $p$. For a function $f = \max\{f_1, \ldots, f_k\}$ where each $f_i$ is convex, we say $f_i$ is \emph{active} at $p$ if $f_i(p) = f(p)$. If each $f_i$ is differentiable, the subgradient polytope of $f$ at $p$, denoted $\partial_p f$, is the convex hull of $\nabla f_i(p)$ for those $f_i$ active at $p$. 

So we compute the gradient of each term participating in the maximum that defines $v|_{\mathbf{L}}$. Let $\partial_i u$ denote the $i$-th derivative of $u \colon \R^d \to \R$ for each $1 \leq i \leq d$. The gradient image of any function on $\mathbf{L}$ lies naturally in the dual space $\mathbf{L}^\vee$ with basis \[X_0^\vee, \cdots, X_{n-r-1}^\vee, X_{m+1}^\vee, \cdots, X_{m+r}^\vee.\]

Since $U = u \circ L$ does not depend on $X_0, \cdots, X_{n-r-1}$, the chain rule gives 
\begin{equation}\label{eq:gradient-tilde-U}\nabla U(p) = (\underbrace{0, \cdots, 0}_{n-r}, \underbrace{(\partial_1 u)|_{L(p)}, \cdots, (\partial_r u)|_{L(p)}}_r) \in \mathbf{L}^\vee.\end{equation}
Similarly, for any $0 \leq k \leq n-r-1$, since $U_k = u \circ L_k$, the chain rule gives \begin{equation}\label{eq:gradient-U_i}\nabla U_k(p) = (\underbrace{0, \cdots, 0}_{k-1}, -\sum_{i=1}^d (\partial_i u)|_{L_k(p)}, \underbrace{0, \cdots, 0}_{n-k-r}, \underbrace{(\partial_1 u)|_{L_k(p)}, \cdots, (\partial_r u)|_{L_k(p)}}_{r}) \in \mathbf{L}^\vee.\end{equation} 

We analyze by cases depending on the coordinates of $p$. 

\subsubsection*{Case 1: There is some $0 \leq k \leq n-r-1$ such that $U_k$ is not active at $p$.}
\label{s4c1}\hfill

We can choose an open neighborhood $R \subseteq \mathbf{L}$ of $p$ such that $U_k$ is not active at any point on $R$. For any $q \in R$, by the calculations (\ref{eq:gradient-tilde-U}) and (\ref{eq:gradient-U_i}), the vertices of the gradient polytope $\partial_q (v|_{\mathbf{L}})$ lie in the hyperplane $\{X_k^\vee = 0\}$ of $\mathbf{L}^\vee$, and hence the entire gradient polytope is contained in this hyperplane as well. It follows that \[\RMA(v|_{\mathbf{L}})(R) = \mathrm{vol}_{\mathbf{L}^\vee}(\bigcup_{q \in R} \partial_q(v|_{\mathbf{L}})) = 0.\]

\subsubsection*{Case 2: $U$ is not active at $p$.}\label{s4c2}\hfill

This is similar to Case 1. We can find an open neighborhood $R \subseteq \mathbf{L}$ of $p$ such that for any point $q \in R$, the subgradient polytope $\partial_q(v|_{\mathbf{L}})$ is contained in the hyperplane $\{\sum_{0 \leq k \leq n-r-1} X_k + \sum_{1 \leq k' \leq r} X_{m+k'} = 0\}.$ So $\RMA(v|_{\mathbf{L}})(R) = 0$. \\

By the two cases above, the support of $\RMA(v|_{\mathbf{L}})$ is contained in the set of points $p$ where $U_0, \cdots, U_{n-r-1}, U$ are all active in $v|_{\mathbf{L}}$; that is, for every $0 \leq k \leq n-r-1$ we have $U_k(p) = U(p)$, which forces $X_k(p) = 0$. Because $X_{n-r+1} = \cdots = X_m = 0$ on $\mathbf{L}$ as well, we must have $p \in \Delta$ by \eqref{eq:delta-in-coord}. Thus, 

\begin{cor}\label{cor:supp-RMA}
    The support of $\RMA(v|_{\mathbf{L}})$ is contained in $\mathbf{L} \cap \Delta$. 
\end{cor}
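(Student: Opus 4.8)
The plan is to deduce the statement from Cases~1 and~2 together with a single monotonicity property of $u$. By Case~1 the measure $\RMA(v|_{\mathbf{L}})$ puts no mass on the open locus where some $U_k$ ($0\le k\le n-r-1$) is inactive in $v|_{\mathbf{L}}$, and by Case~2 it puts no mass on the open locus where $U$ is inactive; hence
\[
\operatorname{supp}\RMA(v|_{\mathbf{L}})\ \subseteq\ S\ :=\ \bigl\{\,p\in\mathbf{L}\ :\ U_0(p)=\cdots=U_{n-r-1}(p)=U(p)\,\bigr\},
\]
a closed subset of $\mathbf{L}$. So it remains to show that the part of $S$ actually carrying $\RMA(v|_{\mathbf{L}})$ lies in $\mathbf{L}\cap\Delta$.

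Next I would extract the monotonicity. Comparing the defining formulas in Convention~\ref{convention:step4} one sees $L_k=L-X_k\cdot(1,\dots,1)$, so for a fixed point $p$ the one-variable function $h(t):=u\bigl(L(p)-t\,(1,\dots,1)\bigr)$ satisfies $h(0)=U(p)$ and $h\bigl(X_k(p)\bigr)=U_k(p)$. It is convex (being $u$ precomposed with an affine line) and non-increasing, since $h'(t)=-\bigl(\sum_{i=1}^d\partial_i u\bigr)\bigl(L(p)-t\,(1,\dots,1)\bigr)\le 0$ by the inequality $\sum_i\partial_i u\ge 0$ of Proposition~\ref{prop:extension}. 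Moreover $\sum_i\partial_i u>0$ on the open orthant $\mathbf{R}^d_{>0}$: in the equation of Theorem~\ref{thm:CTY} one has $\det(D^2u)(\sum_i\partial_i u)^{n-d}=c>0$ on $\mathbf{R}^d_{>0}$ and $n-d\ge 1$, so $\sum_i\partial_i u$ cannot vanish there. Combined with convexity, this makes $h$ strictly decreasing on any interval where it is positive.

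Then, for $p\in S$ lying in the support of $\RMA(v|_{\mathbf{L}})$, I would split into two cases. If $v|_{\mathbf{L}}(p)=U(p)>0$, the equalities $h\bigl(X_k(p)\bigr)=h(0)>0$ together with strict decrease of $h$ where positive force $X_k(p)=0$ for every $0\le k\le n-r-1$; since $X_{n-r}=\cdots=X_m\equiv 0$ on $\mathbf{L}$, formula~\eqref{eq:delta-in-coord} then places $p$ in $\Delta$ once one also checks the sign conditions $X_{m+1}(p),\dots,X_{m+r}(p)\ge 0$, which is exactly where the precise shape of the positivity locus of $u$ enters. If instead $v|_{\mathbf{L}}(p)=0$, then $p$ lies in the convex sublevel set $\{v|_{\mathbf{L}}=0\}$; on the interior of that set $v|_{\mathbf{L}}\equiv 0$ is affine, so by locality $\RMA(v|_{\mathbf{L}})$ ignores it, and it remains to treat $p$ on its boundary, where some $U_k(q)$ or $U(q)$ becomes positive arbitrarily near $p$ and a one-sided version of the monotonicity argument again pins the coordinates. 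Either way $p\in\mathbf{L}\cap\Delta$.

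The reductions via Cases~1 and~2 and the chain-rule expressions for $\nabla U_k$, $\nabla U$ are routine. The step I expect to be the real obstacle is controlling the ``flat locus'' $\{v|_{\mathbf{L}}=0\}$ and, relatedly, pinning down exactly where the extended function $u$ of Proposition~\ref{prop:extension} and its diagonal derivative $\sum_i\partial_i u$ vanish: one must rule out that the equalities $U_k(p)=U(p)$ merely confine $p$ to a larger flat region without forcing $X_k(p)=0$, and must verify that whatever residual mass $\RMA(v|_{\mathbf{L}})$ carries on the boundary of this flat locus still lands in $\Delta$ and satisfies the required sign conditions. That is where I would concentrate the effort.
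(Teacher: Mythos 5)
Your reduction via Cases 1 and 2 to the all-active locus $S$ is exactly the paper's first step, and your convexity/monotonicity argument for the function $h(t)=u\bigl(L(p)-t(1,\dots,1)\bigr)$ is a reasonable way to make precise the paper's terse assertion that $U_k(p)=U(p)$ forces $X_k(p)=0$. But as submitted the proposal is a plan with the decisive steps left open, and the one resolution you do sketch for the hard case is wrong. The issue is the flat locus: the extended $u$ of Proposition~\ref{prop:extension} vanishes identically on the half-space $\{t_1+\cdots+t_d\le 0\}$, so $S$ is \emph{not} contained in $\{X_0=\cdots=X_{n-r-1}=0\}$ — e.g.\ any $p$ with $\sum_{i=1}^r X_{m+i}(p)\le 0$ and $X_k(p)>0$ has $U(p)=U_k(p)=0$, hence is all-active, with $X_k(p)\ne 0$. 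On the boundary of $\{v|_{\mathbf{L}}=0\}$ your claim that ``a one-sided version of the monotonicity argument again pins the coordinates'' therefore fails: the coordinates are genuinely not pinned there. What actually saves the corollary at such points is a Case-1/2-type degeneracy argument applied to \emph{gradients} rather than to activity: if $X_k(p)>0$ and $U(p)=0$, then $L_k$ maps a whole neighborhood of $p$ into the open half-space where $u\equiv 0$, so $\nabla U_k\equiv 0$ there and every subgradient near $p$ lies in the hyperplane $\{X_k^\vee=0\}$, giving zero mass. That idea is absent from your writeup.

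Two further points. First, your justification that $\sum_i\partial_i u>0$ (hence $h$ strictly decreasing where positive) uses the PDE on $\mathbf{R}^d_{>0}$, but $L(p)=(X_{m+1}(p),\dots,X_{m+r}(p),0,\dots,0)$ has $d-r$ vanishing coordinates and so never lies in $\mathbf{R}^d_{>0}$ when $r<d$; you need strict diagonal monotonicity of the \emph{extended} $u$ on the positivity region $\{\sum_i t_i>0\}$, which requires a separate argument (e.g.\ homogeneity/Euler's relation for the extension of Proposition~\ref{prop:extension}). Second, the sign conditions $X_{m+1}(p),\dots,X_{m+r}(p)\ge 0$ needed to land in $\Delta$ via \eqref{eq:delta-in-coord} are flagged but not proved. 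For comparison, the paper's own proof is a one-line deduction from Cases 1 and 2 (``$U_k(p)=U(p)$, which forces $X_k(p)=0$'') that glosses over the same flat-locus subtlety, so you have correctly located where the real content lies — but you have not supplied it, and the route you propose for the boundary of the flat locus would not close the gap.
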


In the remaining cases, we assume $U, U_0, \cdots, U_{n-r-1}$ are all active at $p$. 
\subsubsection*{Case 3: All $U_k, U$ are active at $p$ and $\mathbf{L}$ does not contain $\Delta$}\label{s4c3} \hfill

Because $\mathbf{L}$ does not contain $\Delta$, the set of sections $I$ does not contain all of $\{s_1^{e/b_1}, \cdots, s_d^{e/b_d}\}$; so $r < d$. Thus $\mathbf{L}$ can only intersect $\Delta$ along $\partial \Delta$; that is, $\mathbf{L} \cap \Delta = \mathbf{L} \cap \partial \Delta$. So $p \in \partial \Delta$. We claim $\RMA(v|_{\mathbf{L}})$ is the zero measure. For this we need to use the boundary condition in Theorem~\ref{thm:CTY}: \begin{equation}\label{eq:BC-2}\sum_{i=1}^d \partial_i u = 0 \text{ on }\partial \mathbf{R}^d_{\geq 0}.\end{equation}

By Corollary~\ref{cor:supp-RMA}, it suffices to prove that $\RMA(v|_{\mathbf{L}})(\mathbf{L} \cap \partial \Delta) = 0.$ To this end, we use (\ref{eq:gradient-tilde-U}) and (\ref{eq:gradient-U_i}) again. Because $p \in \mathbf{L} \cap \partial \Delta$, observe that $L(p), L_k(p) \in \partial \R^d_{\geq 0} \subseteq \R^d$ for each $0 \leq k \leq n-r-1$. Plugging (\ref{eq:BC-2}) into the $i$-th coordinate of (\ref{eq:gradient-U_i}), we see that subgradient polytope $\partial_p (v|_{\mathbf{L}})$ is contained in the subspace $\{X_0^\vee = \cdots = X_{n-r-1}^\vee = 0\} \subsetneq \mathbf{L}^\vee$. It follows that \[\RMA(v|_{\mathbf{L}})(\mathbf{L} \cap \partial \Delta) = \mathrm{vol}_{\mathbf{L}^\vee} (\bigcup_{p \in \mathbf{L} \cap \partial \Delta} \partial_p(v|_{\mathbf{L}})) = 0.\]

\begin{remark}
    From this, one can see that (\ref{eq:BC-2}) is the most straightforward boundary condition to put on $u$ in order to prevent $\RMA(v|_{\mathbf{L}})$ from putting singular charges on $\partial \Delta$, or equivalently on $\partial \sk(X)$. 
\end{remark}

\subsubsection*{Case 4: All $U_k, U$ are active at $p$ and $\mathbf{L}$ contains $\Delta$} \hfill

So $r = d$, or equivalently $s_1^{e/b_1}, \cdots, s_d^{e/b_d} \in I$. As in the proof of Case \hyperref[s4c3]{3}, using the boundary condition from Theorem~\ref{Conv:TY-metric} we have $\RMA(v|_{\mathbf{L}}) (\partial \Delta) = 0$. Finally, we prove the following lemma via a volume calculation, relying on the PDE from Theorem~\ref{thm:CTY}.

\begin{lemma}\label{lem:mu_ac}
    We have $\RMA(v|_{\mathbf{L}}) = c \cdot (n-d)!^{-1} \cdot \mathrm{Leb}_{\Delta}$, where $\mathrm{Leb}_\Delta$ is the Lebesgue measure on $\Delta \cap \mathbf{L} = \Delta$ and $c$ is the constant from (\ref{constant}). 
\end{lemma}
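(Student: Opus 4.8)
The plan is to compute $\RMA(v|_{\mathbf{L}})$ directly from the Alexandrov definition, using that by Corollary~\ref{cor:supp-RMA} together with the boundary computation just preceding the lemma the measure is supported on the relative interior $\Delta^\circ$, a $d$-dimensional subset of the $n$-dimensional $\mathbf{L}$. Recall that on $\mathbf{L}$ we have $v|_{\mathbf{L}} = \max\{U_0,\dots,U_{n-d-1},U\}$ with each summand differentiable and convex, so the subgradient polytope at any point is the convex hull of the gradients of the active summands.

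\medskip\noindent\textbf{Shape of the subgradient polytope.} First I would fix $p\in\Delta^\circ$ and put $w=(X_{m+1}(p),\dots,X_{m+d}(p))\in\mathbf{R}^d_{>0}$. Since $r=d$ here and $X_0(p)=\cdots=X_{n-d-1}(p)=0$, the affine maps satisfy $L(p)=L_0(p)=\cdots=L_{n-d-1}(p)=w$, so all of $U,U_0,\dots,U_{n-d-1}$ are active at $p$ and $\partial_p(v|_{\mathbf{L}})=\mathrm{conv}\{\nabla U(p),\nabla U_0(p),\dots,\nabla U_{n-d-1}(p)\}$. Plugging $L(p)=L_k(p)=w$ into (\ref{eq:gradient-tilde-U}) and (\ref{eq:gradient-U_i}), all these gradients share the same last $d$ coordinates $\nabla u(w)=((\partial_1u)(w),\dots,(\partial_du)(w))$, while their first $n-d$ coordinates are $0$ for $\nabla U(p)$ and $-G(w)e_k$ for $\nabla U_k(p)$, where $G(w):=\sum_{i=1}^d(\partial_iu)(w)$. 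Hence $\partial_p(v|_{\mathbf{L}})$ is the translate, by $\nabla u(w)$ in the $X_{m+i}^\vee$-directions, of the $(n-d)$-simplex $\mathrm{conv}\{0,-G(w)e_0,\dots,-G(w)e_{n-d-1}\}$ in the $X_0^\vee,\dots,X_{n-d-1}^\vee$-directions, whose $(n-d)$-dimensional volume is $G(w)^{n-d}/(n-d)!$.

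\medskip\noindent\textbf{Assembling the subgradient image.} Next I would parametrize $\bigcup_{p\in\Delta^\circ}\partial_p(v|_{\mathbf{L}})$ by the map $\Psi(w,\lambda)=\big(-G(w)\lambda_0,\dots,-G(w)\lambda_{n-d-1},(\partial_1u)(w),\dots,(\partial_du)(w)\big)$ on the $n$-dimensional domain $\mathbf{R}^d_{>0}\times\sigma$, where $\sigma=\{\lambda_k\ge 0,\ \sum_k\lambda_k\le 1\}\subseteq\mathbf{R}^{n-d}$ has volume $1/(n-d)!$. The differential $D\Psi$ is block lower triangular with diagonal blocks $-G(w)I_{n-d}$ and $D^2u(w)$, so $|\det D\Psi|=G(w)^{n-d}\det D^2u(w)$, independent of $\lambda$. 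By Theorem~\ref{thm:CTY}, $\det D^2u(w)\cdot G(w)^{n-d}=c$ on $\mathbf{R}^d_{>0}$; in particular $\det D^2u>0$ there, so $u$ is strictly convex, $\nabla u$ is injective, and $\Psi$ is injective on the interior of its domain (the last $d$ coordinates recover $w$, then the first $n-d$ recover $\lambda$). For Borel $E\subseteq\Delta^\circ$ with $w$-image $W_E$, the area formula gives
\[\RMA(v|_{\mathbf{L}})(E)=\mathrm{vol}_{\mathbf{L}^\vee}\!\big(\Psi(W_E\times\sigma)\big)=\int_{W_E}\!\!\Big(\int_\sigma d\lambda\Big)G(w)^{n-d}\det D^2u(w)\,dw=\frac{c}{(n-d)!}\,\mathrm{vol}(W_E).\]
Since $X_{m+1},\dots,X_{m+d}$ are integral affine coordinates on $\NRT$ restricting to integral affine coordinates on $\Delta$, one has $\mathrm{vol}(W_E)=\Leb_\Delta(E)$; combined with $\RMA(v|_{\mathbf{L}})(\partial\Delta)=0$ this yields $\RMA(v|_{\mathbf{L}})=c\,(n-d)!^{-1}\Leb_\Delta$.

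\medskip\noindent\textbf{Main obstacle.} The conceptual content is easy — the max-structure of the trivial metric forces the subgradient polytope to be the simplex $\mathrm{conv}\{0,-Ge_0,\dots\}$, and the Collins--Tong--Yau PDE is exactly what turns the resulting density $G(w)^{n-d}\det D^2u(w)$ into the constant $c$, as anticipated by the ``toy example'' in the introduction. The delicate part is the bookkeeping in the Jacobian/area-formula step: one must keep the two coordinate groups on $\mathbf{L}^\vee$ (the ``simplex directions'' $X_k^\vee$ versus the ``gradient directions'' $X_{m+i}^\vee$) straight, justify the a.e.-injectivity of $\Psi$ from strict convexity of $u$, and confirm that the Lebesgue normalizations on $\Delta$ and on $\mathbf{L}^\vee$ are precisely those induced by the lattice $\mathbf{Z}^{N+1}/\mathbf{Z}$, so that no spurious volume factor enters.
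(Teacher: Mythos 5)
Your proposal is correct and follows essentially the same route as the paper: identify the subgradient polytope at each $p\in\Delta^\circ$ as a translated standard $(n-d)$-simplex with edge length $\sum_i\partial_i u$, compute its volume, and convert the resulting density $(\sum_i\partial_i u)^{n-d}\det D^2u$ into the constant $c$ via the Collins--Tong--Yau PDE. Your explicit parametrization $\Psi$ and area-formula/injectivity argument is just a more carefully justified version of the paper's Fubini-type step $\mathrm{vol}(\bigcup_{p}\partial_p(v|_{\mathbf{L}}))=\int \mathrm{vol}(\partial_p(v|_{\mathbf{L}}))\,d(\partial_1u)\wedge\cdots\wedge d(\partial_du)$, so no substantive difference.
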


\begin{proof}
    By definition $\Delta = \tau^\trop(\sk(X))$ and $X_{m+1}, \cdots, X_{m+d}$ form coordinates on $\Delta$. Consider any open box $R \coloneqq \prod_{i=1}^d \{a_i < X_{m+i} < b_i\}$ in $\mathrm{relint}(\Delta)$, where $0 < a_i < b_i$. 

    In (\ref{eq:gradient-tilde-U}) and (\ref{eq:gradient-U_i}), for any point $p \in \Delta$ and any $0 \leq k \leq n - d - 1$ we have \[L_k(p) = L(p) = (- X_{m+1}(p), \cdots, - X_{m+d}(p)) \in \R^d.\] Temporarily, write $\partial_i u$ for the value $(\partial_i u)|_{L(p)}$ for brevity. Since $U$ and all $U_0, \cdots, U_{n-d-1}$ are active at $p$, the subgradient polytope $\partial_p (v|_{\mathbf{L}})$ is the convex hull of $n-d+1$ points in $\mathbf{L}^\vee$, which can be read off as the rows in the following $(n-d+1,n)$ matrix, where all the vacant entries are zero.
    \begin{figure}[h]\[
\left.
\begin{bmatrix}\,
\smash{
  \begin{matrix}
    0 &        &        &        & \partial_1 u & \cdots & \partial_d u \\
    -\sum_{i=1}^d \partial_i u &  &        &        & \partial_1 u & \cdots & \partial_d u \\
           & \ddots &    &        & \vdots     &   & \vdots \\
           &        & -\sum_{i=1}^d \partial_i u &  & \partial_1 u & \cdots & \partial_d u
  \end{matrix}
}
\vphantom{
  \begin{matrix}
  \smash[b]{\vphantom{\Big|}}\\
  \smash[b]{\vphantom{\Big|}}\\
  \vdots\\
  \smash[t]{\vphantom{\Big|}}
  \end{matrix}
}
\,\end{bmatrix}
\right\rbrace{\scriptstyle n - d + 1}
\]

\vspace{-1.2em}
\[
\hspace{-3.5em}
\underbrace{\hspace{13em}}_{n-d}
\hspace{0.5em}
\underbrace{\hspace{6.5em}}_{d}
\]\end{figure}

When evaluated at $L(p)$, the last $d$ coordinates of all the $n-d+1$ points are the same. Thus the polytope $\partial_p (v|_{\mathbf{L}})$ is isomorphic to the standard $n-d$ simplex with one vertex at the origin, where each edge emanating from the origin has length \[|-\sum_{i=1}^d (\partial_i u)|_{L(p)}|  = \sum_{i=1}^d (\partial_i u)|_{L(p)},\] which is positive because $u$ grows diagonally. The standard $n-d$ simplex has volume $(n-d)!^{-1}$, and so \[\mathrm{vol}_{\mathbf{R}^{n-d}} (\partial_p (v|_{\mathbf{L}})) = (n-d)!^{-1}(\sum_{i=1}^d (\partial_i u)|_{L(p)})^{n-d}.\]

Also observe from the matrix that the gradient functions $\partial_1 u, \cdots, \partial_d u$ are the dual coordinates to $X_{m+1}, \cdots, X_{m+d}$. The latter are exactly the coordinates on the $d$-dimensional box $R \subseteq \Delta$. Therefore, \begin{align*}\RMA(v|_{\mathbf{L}})(R) &= \mathrm{vol}_{\mathbf{L}^\vee}(\bigcup_{p \in R} \partial_p (v|_{\mathbf{L}}))\\ &= \int_R \mathrm{vol}_{\R^{n-d}} (\partial_p (v|_{\mathbf{L}})) d(\partial_1 u) \wedge \cdots \wedge d(\partial_d u)\\
    &= (n-d)!^{-1} \int_R (\sum^d_{i=1} \partial_i u)^{n-d} d(\partial_1 u) \wedge \cdots \wedge d(\partial_d u),\\
    &= (n-d)!^{-1} \int_R (\sum^d_{i=1} \partial_i u)^{n-d} \det(D^2 u) d\, \mathrm{Leb}_\Delta\\
    &= c \cdot (n-d)!^{-1} \cdot \int_R d\, \mathrm{Leb}_\Delta, \end{align*}
    where the last equality uses Theorem~\ref{thm:CTY}. So $\RMA(v|_{\mathbf{L}}) = c \cdot (n-d)!^{-1} \cdot \Leb_\Delta$, as desired. 
\end{proof}
\subsubsection*{Concluding the case analysis}\hfill 
\\
We have shown Proposition~\ref{prop:step4-cases}, which states
\begin{equation}\label{eq:step4-cases}\RMA(v|_{\mathbf{L}_{\sigma}}) = \begin{cases}
   c \cdot (n-d)!^{-1} \cdot \Leb_\Delta & \text{ if } \Delta\subseteq \sigma \\[2pt]
    0 & \text{ otherwise.}
\end{cases}\end{equation}

The case analysis leading to (\ref{eq:step4-cases}) is summarized in Figure \ref{fig:cases}. 

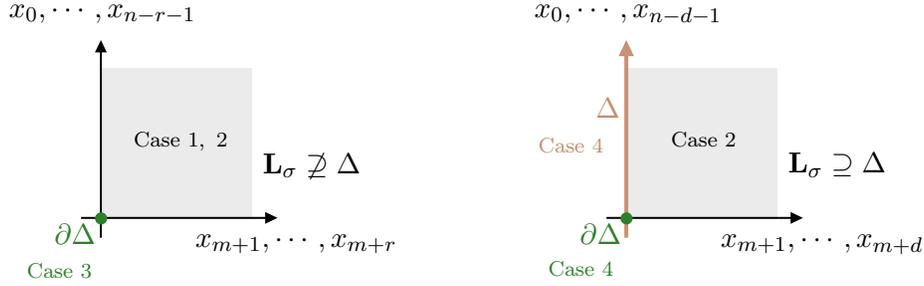
\begin{figure}[h]
\tikzset{every picture/.style={line width=0.85pt}} 

\begin{tikzpicture}[x=0.75pt,y=0.75pt,yscale=-1,xscale=1][ht]

\draw  [draw opacity=0][fill={rgb, 255:red, 235; green, 235; blue, 235 }  ,fill opacity=1 ] (154,54.96) -- (230.04,54.96) -- (230.04,131) -- (154,131) -- cycle ;
\draw [color={rgb, 255:red, 0; green, 0; blue, 0 }  ,draw opacity=1 ][line width=0.75]    (153.89,43.96) -- (153.89,140.74) ;
\draw [shift={(153.89,40.96)}, rotate = 90] [fill={rgb, 255:red, 0; green, 0; blue, 0 }  ,fill opacity=1 ][line width=0.08]  [draw opacity=0] (6.25,-3) -- (0,0) -- (6.25,3) -- cycle    ;
\draw [color={rgb, 255:red, 0; green, 0; blue, 0 }  ,draw opacity=1 ][line width=0.75]    (143.89,130.74) -- (240,130.74) ;
\draw [shift={(243,130.74)}, rotate = 180] [fill={rgb, 255:red, 0; green, 0; blue, 0 }  ,fill opacity=1 ][line width=0.08]  [draw opacity=0] (6.25,-3) -- (0,0) -- (6.25,3) -- cycle    ;
\draw  [color={rgb, 255:red, 46; green, 126; blue, 42 }  ,draw opacity=1 ][fill={rgb, 255:red, 46; green, 126; blue, 42 }  ,fill opacity=1 ] (151.52,131) .. controls (151.52,129.63) and (152.63,128.52) .. (154,128.52) .. controls (155.37,128.52) and (156.48,129.63) .. (156.48,131) .. controls (156.48,132.37) and (155.37,133.48) .. (154,133.48) .. controls (152.63,133.48) and (151.52,132.37) .. (151.52,131) -- cycle ;
\draw  [draw opacity=0][fill={rgb, 255:red, 235; green, 235; blue, 235 }  ,fill opacity=1 ] (419,54.96) -- (495.04,54.96) -- (495.04,131) -- (419,131) -- cycle ;
\draw [color={rgb, 255:red, 201; green, 145; blue, 118 }  ,draw opacity=1 ][fill={rgb, 255:red, 184; green, 233; blue, 134 }  ,fill opacity=1 ][line width=1.5]    (418.89,44.96) -- (418.89,140.74) ;
\draw [shift={(418.89,40.96)}, rotate = 90] [fill={rgb, 255:red, 201; green, 145; blue, 118 }  ,fill opacity=1 ][line width=0.08]  [draw opacity=0] (8.13,-3.9) -- (0,0) -- (8.13,3.9) -- cycle    ;
\draw [color={rgb, 255:red, 0; green, 0; blue, 0 }  ,draw opacity=1 ][line width=0.75]    (408.89,130.74) -- (505,130.74) ;
\draw [shift={(508,130.74)}, rotate = 180] [fill={rgb, 255:red, 0; green, 0; blue, 0 }  ,fill opacity=1 ][line width=0.08]  [draw opacity=0] (6.25,-3) -- (0,0) -- (6.25,3) -- cycle    ;
\draw  [color={rgb, 255:red, 46; green, 126; blue, 42 }  ,draw opacity=1 ][fill={rgb, 255:red, 46; green, 126; blue, 42 }  ,fill opacity=1 ] (416.52,131) .. controls (416.52,129.63) and (417.63,128.52) .. (419,128.52) .. controls (420.37,128.52) and (421.48,129.63) .. (421.48,131) .. controls (421.48,132.37) and (420.37,133.48) .. (419,133.48) .. controls (417.63,133.48) and (416.52,132.37) .. (416.52,131) -- cycle ;

\draw (169,86) node [anchor=north west][inner sep=0.75pt]  [font=\scriptsize] [align=left] {Case $\displaystyle 1,\ 2$};
\draw (234,96.4) node [anchor=north west][inner sep=0.75pt]    {$\mathbf{L}_{\sigma} \not\supseteq \Delta$};
\draw (106,21.4) node [anchor=north west][inner sep=0.75pt]    {$x_{0} ,\cdots,x_{n-r-1}$};
\draw (200,137.4) node [anchor=north west][inner sep=0.75pt]    {$x_{m+1} ,\cdots, x_{m+r}$};
\draw (129,132.4) node [anchor=north west][inner sep=0.75pt]  [color={rgb, 255:red, 46; green, 126; blue, 42 }  ,opacity=1 ]  {$\partial \Delta $};
\draw (115,152) node [anchor=north west][inner sep=0.75pt]  [font=\scriptsize,color={rgb, 255:red, 46; green, 126; blue, 42 }  ,opacity=1 ] [align=left] {Case 3};
\draw (440,86) node [anchor=north west][inner sep=0.75pt]  [font=\scriptsize] [align=left] {Case $\displaystyle 2$};
\draw (499,96.4) node [anchor=north west][inner sep=0.75pt]    {$\mathbf{L}_{\sigma} \supseteq \Delta$};
\draw (371,21.4) node [anchor=north west][inner sep=0.75pt]    {$x_{0} ,\cdots ,x_{n-d-1}$};
\draw (465,137.4) node [anchor=north west][inner sep=0.75pt]    {$x_{m+1} ,\cdots, x_{m+d}$};
\draw (394,132.4) node [anchor=north west][inner sep=0.75pt]  [color={rgb, 255:red, 46; green, 126; blue, 42 }  ,opacity=1 ]  {$\partial \Delta $};
\draw (378,151) node [anchor=north west][inner sep=0.75pt]  [font=\scriptsize,color={rgb, 255:red, 46; green, 126; blue, 42 }  ,opacity=1 ] [align=left] {Case 4};
\draw (402,68.4) node [anchor=north west][inner sep=0.75pt]  [color={rgb, 255:red, 196; green, 141; blue, 108 }  ,opacity=1 ]  {$\Delta$};
\draw (373,89) node [anchor=north west][inner sep=0.75pt]  [font=\scriptsize,color={rgb, 255:red, 196; green, 141; blue, 108 }  ,opacity=1 ] [align=left] {Case 4};

\raggedbottom
\end{tikzpicture}

\caption{\label{fig:cases} The left illustrates $\mathbf{L}_{\sigma}$ that does not contain $\Delta$. From the case analysis, we have $\RMA(v|_{\mathbf{L}_{\sigma}}) = 0$. The right illustrates $\mathbf{L}_{\sigma}$ that contains $\Delta$, where we have shown $\RMA(v|_{\mathbf{L}_{\sigma}}) = c \cdot (n-d)!^{-1}\cdot  \Leb_\Delta$.}\end{figure}~\\ \hfill
Lastly, through this case analysis, we arrive at the desired relation~\eqref{step4:main-eq}.

\begin{cor}
Equation~\eqref{step4:main-eq} holds. That is, \[\sum_{\sigma} \RMA(v|_{\mathbf{L}_{\sigma}}) = \mathrm{Leb}_{\Delta},\] where the sum runs over $\sigma = \tau^\trop(C)$ for all $C \in \mathcal{C}_n$. 
\end{cor}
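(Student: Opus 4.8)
The plan is to derive \eqref{step4:main-eq} directly from Proposition~\ref{prop:step4-cases}. Summing the case dichotomy \eqref{eq:step4-cases} over all maximal cells gives
\[\sum_{\sigma} \RMA(v|_{\mathbf{L}_{\sigma}}) \;=\; c\,(n-d)!^{-1}\,N_0\cdot\Leb_{\Delta}, \qquad N_0 \coloneqq \#\{C \in \mathcal{C}_n : \Delta \subseteq \tau^\trop(C)\},\]
so everything reduces to checking that the scalar $c\,(n-d)!^{-1}N_0$ equals $1$. By the definition of $c$ in \eqref{constant}, this is exactly the combinatorial identity $N_0 = \#\{C \in \mathcal{C}_n : \sk(X) \subseteq C\}$.

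I would establish this identity by showing that, for a maximal cell $C\in\mathcal{C}_n$, one has $\Delta \subseteq \tau^\trop(C)$ if and only if $\sk(X)\subseteq C$. The direction ``$\Leftarrow$'' is immediate from $\Delta=\tau^\trop(\sk(X))$. For ``$\Rightarrow$'', write $C = C_{J,l} = \QM_{p_{J,l}}(\bar{X},F_J)$ with $|J|=n$. By Definition~\ref{def:cell-decomp}\,(\ref{dcp-i}) and Proposition~\ref{prop:decomp}, $\tau^\trop$ restricts to a homeomorphism on $C$, so $(\tau^\trop|_C)^{-1}(\Delta)$ is a well-defined $d$-dimensional subcone of $C$ carried homeomorphically onto $\Delta$ by $\tau^\trop$. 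Now use the explicit coordinates from Step~\hyperref[step1]{1}, together with the genericity of $\mathcal{S}$ there (in particular, any section of $\mathcal{S}$ outside $J$ is nonvanishing at $p_{J,l}$, hence by Remark~\ref{rem:center} is assigned value $0$ by every point of $C$): comparing with $\Delta=\{X_0=\cdots=X_m=0\leq X_{m+1},\cdots,X_{m+d}\}$ from \eqref{eq:delta-in-coord} forces $(\tau^\trop|_C)^{-1}(\Delta)$ to be $\QM_Z(\bar{X},D)=\sk(X)$, which is itself the cell $C_{\mathcal{S}_0}\in\mathcal{C}$ and, by the same comparison, the unique $d$-cell of $\mathcal{C}$ whose tropical image is $\Delta$. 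Hence $\sk(X)=(\tau^\trop|_C)^{-1}(\Delta)\subseteq C$, as claimed.

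Feeding $N_0=\#\{C\in\mathcal{C}_n:\sk(X)\subseteq C\}$ back into the first display and invoking \eqref{constant} gives $\sum_{\sigma}\RMA(v|_{\mathbf{L}_{\sigma}})=\Leb_{\Delta}$, which is \eqref{step4:main-eq}. The only step that is not pure bookkeeping is the identity $N_0=\#\{C:\sk(X)\subseteq C\}$ in the second paragraph, and I expect that to be the main obstacle: it requires unwinding the indexing of the cellular decomposition of Step~\hyperref[step2]{2} and using the transversality of the sections of Step~\hyperref[step1]{1} to exclude any $d$-face of a maximal cell, other than $\sk(X)$ itself, from tropicalizing onto $\Delta$. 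Everything else is immediate.
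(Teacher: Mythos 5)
Your proposal is correct and follows essentially the same route as the paper: sum the dichotomy of Proposition~\ref{prop:step4-cases} over maximal cells and observe that the normalization \eqref{constant} makes the total coefficient equal to $1$. The only difference is that the paper asserts the identity $\#\{C:\sk(X)\subseteq C\}=\#\{\sigma:\Delta\subseteq\sigma\}$ with the phrase ``or equivalently,'' whereas you supply the (correct) coordinate argument justifying it.
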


\begin{proof}
    Let $a$ be the number of cells $C \in \mathcal{C}_n$ containing $\sk(X)$, or equivalently the number of $\sigma = \tau^\trop(C)$ containing $\Delta$. By the choice of the constant $c$ in \eqref{constant}, we have $c \cdot (n-d)!^{-1} = a^{-1}$. It then follows from Proposition~\ref{prop:step4-cases} that 
    \[\sum_{\sigma} \RMA(v|_{\mathbf{L}_{\sigma}}) = \sum_{\sigma \colon \Delta \subseteq \sigma} a^{-1} \Leb_\Delta = \Leb_\Delta,\] as desired.
\end{proof}

\subsection*{Step 5: Approximations}\label{step5} \hfill

Let us summarize what we have so far. Say $v_k$ are smooth convex functions that converge to $v$. For convenience, let $\mu_{\sigma,k}$ and $\mu_\sigma$ denote $\RMA(v_k|_{\mathbf{L}_{\sigma}})$ and $\RMA(v|_{\mathbf{L}_{\sigma}})$, respectively. As $k \to \infty$ we have weak convergence of measures, 
\begin{equation}\label{eq:step5-weak-conv-before-res}\sum_{\sigma} \mu_{\sigma,k} \to \sum_{\sigma} \mu_{\sigma} \stepeq \Leb_{\Delta},\end{equation} where the sum runs over $\sigma = \tau^\trop(C)$ for all $C \in \mathcal{C}_n.$ 

Also recall that our potential solution to (\ref{eq:NA-MA}) is $\psi^\na = (\tau^\trop)^* v$. By Step \hyperref[step3]{3}, showing $\NAMA(\psi^\na) = \mu^\na$ is equivalent to showing the following weak convergence of measures
\begin{equation}\label{step5:main-eq}\sum_{\sigma} \mathbf{1}_{\sigma} \mu_{\sigma,k} \to \Leb_{\Delta}. \end{equation} 

As discussed in \S \ref{sec:comparsion}, we cannot directly go from (\ref{eq:step5-weak-conv-before-res}) to (\ref{step5:main-eq}), because some mass of $\mu_{\sigma,k}$ can be trapped outside of the closed set $\sigma$. To get around this issue, we want to find some nice approximation $v_k$ such that \begin{equation}\label{eq:step5-equiv-of-meas} \lim_{k \to \infty} \mu_{\sigma,k}(\mathbf{L}_\sigma \backslash \sigma) = 0.\end{equation} That is, the mass of $\mu_{\sigma,k}$ concentrates on $\sigma$ as $k \to \infty$. Assuming (\ref{eq:step5-equiv-of-meas})—to be proved in Proposition~\ref{prop:step5-no-extra-mass}—and using (\ref{eq:step5-weak-conv-before-res}), we obtain (\ref{step5:main-eq}).

\begin{construction}\label{cons-mollification}
    We now construct the desired approximations $v_k$ of $v$. We continue using the notation in Convention~\ref{convention:step4}, but assuming additionally that the cell $C \in \mathcal{C}_n$ contains $\sk(X)$. In particular, the functions $X_0, \cdots, X_{n-d-1}, X_{m+1}, \cdots, X_{m+d}$ form coordinates on $\mathbf{L}$.

    The trick is to convolve $v$ with a mollifier $\eta$, that is, a smooth non-negative function on $\mathbf{L}$ integrating to $1$, additionally required to be compactly supported in \[O \coloneqq \{X_0, \cdots, X_{n-d-1} > 0 \} \subseteq \mathbf{L}.\] For any $k \in \N$, set $\eta_k(x) \coloneqq k^n \eta(k \cdot x)$, whose support is also in $O$. For a convex function $f$, we use the notation $f_k$ to denote the mollification $f \ast \eta_k$. By general properties of mollification, each $f_k$ is smooth, convex, and locally uniformly converges to $f$ as $k \to \infty$. 
\end{construction}
\begin{proposition}\label{prop:step5-no-extra-mass}
    With the construction above, (\ref{eq:step5-equiv-of-meas}) holds. 
\end{proposition}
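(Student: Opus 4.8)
The plan is to turn $\mu_{\sigma,k}=\RMA(v_k|_{\mathbf L})$ into a volume computation for the smooth map $\nabla v_k$, using $\RMA(v_k)(E)=|\nabla v_k(E)|$, and to exploit two structural features of $v|_{\mathbf L}=\max_{0\le k\le n-d-1}\{U_k,U\}$ together with the fact that the mollifier $\eta_k$ is supported in the one-sided set $O=\{X_0,\dots,X_{n-d-1}>0\}$. I keep the notation of Convention~\ref{convention:step4} and Construction~\ref{cons-mollification}: $\sigma=\tau^\trop(C)$ is the orthant $\{X_i\ge 0:i\in J\}$, $\Delta=\{X_0=\dots=X_{n-d-1}=0,\ X_{m+1},\dots,X_{m+d}\ge 0\}$ by \eqref{eq:delta-in-coord}, and $\operatorname{diam}\operatorname{supp}\eta_k=\delta_k\to 0$.

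First I would record two facts, writing $g_x(s):=u(L(x)-s\mathbf 1)$ with $L(x)=(X_{m+1}(x),\dots,X_{m+d}(x))$; by Proposition~\ref{prop:extension}, $\sum_i\partial_i u\ge 0$, so $g_x$ is non-increasing. \emph{(i)} On $\{X_l\ge 0\text{ for all }l\le n-d-1\}$ we have $v|_{\mathbf L}=U$, since $U_k(x)=g_x(X_k(x))\le g_x(0)=U(x)$ there. \emph{(ii)} On $\{X_l<0\}$ (for any fixed $l$), $U$ is never the strict maximum: $U_l(x)=g_x(X_l(x))\ge g_x(0)=U(x)$, and equality forces $g_x$ to be constant on $[X_l(x),0]$, hence $\sum_i\partial_i u|_{L(x)}=-g_x'(0)=0$, so $\nabla U(x)$ has zero coordinate sum; since every $\nabla U_k$ has zero coordinate sum by \eqref{eq:gradient-U_i} (with $r=d$), in both cases $\partial v(x)\subseteq H_0:=\{\sum_{i\le n-d-1}X_i^\vee+\sum_j X_{m+j}^\vee=0\}\subseteq\mathbf L^\vee$. (For the verification that equality in \emph{(ii)} really forces constancy, one uses that $u$ is a non-degenerate convex solution of the Monge--Amp\`ere equation, so $s\mapsto\sum_i\partial_i u|_{L(x)-s\mathbf 1}$ is strictly increasing where $u>0$ and vanishes exactly on $\{\sum_i t_i\le 0\}$; this is where the hypotheses of Theorem~\ref{thm:CTY} and Proposition~\ref{prop:extension} enter.)

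Next I would use the one-sidedness: for $y\in\operatorname{supp}\eta_k$ one has $X_l(x-y)<X_l(x)$ for every $l\le n-d-1$. Hence, if some $X_l(x)\le 0$, then $x-y\in\{X_l<0\}$ for all such $y$, so $\nabla v(x-y)\in H_0$ a.e.\ and therefore $\nabla v_k(x)=\int\nabla v(x-y)\,\eta_k(y)\,dy\in H_0$; while if $X_l(x)>\delta_k$ for all $l$, then $x-y\in\{X_l>0\ \forall l\}$, where $v=U$ by \emph{(i)}, so $v_k$ agrees with $U\ast\eta_k$ near $x$ and $\nabla v_k(x)=(\nabla U\ast\eta_k)(x)$ lies in the $d$-plane $P:=\{X_l^\vee=0\ \forall\,l\le n-d-1\}$, because $U=u\circ L$ does not depend on $X_0,\dots,X_{n-d-1}$. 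Since $\dim H_0=n-1$ and $\dim P=d<n$, both images are Lebesgue-null, so the support of $\RMA(v_k)$ is contained in the shrinking slab $\Sigma_k:=\{X_l>0\ \forall l\}\cap\{X_l\le\delta_k\text{ for some }l\}$; in particular $\mu_{\sigma,k}(\mathbf L_\sigma\setminus\sigma)=\RMA(v_k)\big(\Sigma_k\cap(\mathbf L\setminus\sigma)\big)$.

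To finish, note $\Sigma_k\cap(\mathbf L\setminus\sigma)\subseteq W:=\{X_l\ge 0\ \forall\,l\le n-d-1\}\cap\bigcup_{j=1}^d\{X_{m+j}<0\}$, and that the closure $\overline W$ meets $\Delta$ only along $\partial\Delta$. By Lemma~\ref{lem:mu_ac}, $\RMA(v|_{\mathbf L})=c\,(n-d)!^{-1}\,\Leb_\Delta$, so $\RMA(v|_{\mathbf L})(\overline W)=c\,(n-d)!^{-1}\Leb_\Delta(\partial\Delta)=0$; and $\RMA(v_k)\to\RMA(v|_{\mathbf L})$ weakly by the local uniform convergence $v_k\to v$ and weak continuity of $\RMA$ (Remark~\ref{rem:continuity-of-RMA}). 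The portmanteau inequality on the closed set $\overline W$ (against compactly supported test functions, which is all Step~5 needs; for the literal total-mass statement one truncates further, using that $v\equiv 0$ on $\{X_l\ge 0\ \forall l,\ \sum_j X_{m+j}\le 0\}$ to rule out mass escaping to infinity) then gives $\limsup_k\mu_{\sigma,k}(\mathbf L_\sigma\setminus\sigma)\le\limsup_k\RMA(v_k)(\overline W)\le\RMA(v|_{\mathbf L})(\overline W)=0$, which is \eqref{eq:step5-equiv-of-meas}.

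The genuine difficulty — and the reason weak convergence alone is not enough, as already observed before Construction~\ref{cons-mollification} — is that the limit mass $c\,(n-d)!^{-1}\Leb_\Delta$ lives on $\Delta$, which lies on the topological boundary $\partial\sigma$, so one cannot a priori see whether the mass of $\RMA(v_k)$ approaches $\Delta$ from inside or from outside $\sigma$. Fact \emph{(ii)} combined with the choice $\operatorname{supp}\eta_k\subseteq O$ is precisely what confines the residual mass of $\RMA(v_k)$ to the slab $\Sigma_k$, which touches $\Delta$ only along the $\Leb_\Delta$-null boundary $\partial\Delta$; establishing \emph{(ii)} and handling the non-compactness bookkeeping are where essentially all the work lies.
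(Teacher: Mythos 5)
Your argument is correct and follows the same overall strategy as the paper's proof: use the one-sided support of the mollifier in $O$ to kill the Monge--Amp\`ere mass of $v_k$ wherever some $X_l\le 0$ ($l\le n-d-1$), and dispose of the remaining part of $\mathbf{L}\setminus\sigma$ (where some $X_{m+j}<0$) by Portmanteau against $\RMA(v|_{\mathbf{L}})=c\,(n-d)!^{-1}\Leb_\Delta$, which puts no mass on $\partial\Delta$. Where you genuinely diverge is the degeneracy mechanism. The paper shows that for $x$ with some $X_l(x)<0$ and $y\in O$ one has $v|_{\mathbf{L}}(x-y)=w(x-y)$ with $w=\max_i U_i$ a function of only $n-1$ linear forms, so $(v|_{\mathbf{L}})_k=w_k$ there and $\RMA(w_k)$ vanishes on open sets; you instead establish the pointwise subgradient inclusion $\partial v(x)\subseteq H_0$ on $\{X_l<0\}$ --- which needs your fact \emph{(ii)}, that even when $U$ is active its gradient has zero coordinate sum --- and push it through the convolution to get $\nabla v_k(x)\in H_0$. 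The two are equivalent ($H_0$ is exactly the span of the differentials of the paper's $n-1$ linear forms), but the paper's route avoids fact \emph{(ii)} altogether by noting that $U$ is dominated by $U_l$ after the shift and simply drops out of the max; relatedly, your parenthetical appeal to non-degeneracy of the Monge--Amp\`ere solution is unnecessary, since a non-increasing $C^1$ function equal at the two endpoints of an interval is constant there with vanishing derivative, which is all fact \emph{(ii)} requires. Two small bookkeeping points: $\delta_k$ should be the sup-norm radius of $\operatorname{supp}\eta_k$ rather than its diameter, and the Portmanteau inequality for the unbounded closed set $\overline{W}$ is invoked at the same level of rigor as in the paper (which applies it to the unbounded set $V$); your remark about testing against compactly supported functions is the right way to make both precise. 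Neither affects the validity of the argument.
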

\begin{proof}
     By definition of the tropicalization map, $\sigma = \tau^\trop(C)$ is the octant in $\mathbf{L}$ where all coordinates are non-negative. For any $x \in \mathbf{L}$, we have \[(v|_{\mathbf{L}})_k(x) \coloneqq (v|_{\mathbf{L}} \ast \eta_k) (x) \coloneqq \int_{\mathbf{L}} (v|_{\mathbf{L}})(x - y) \cdot \eta_k(y)dy = \int_{O} (v|_{\mathbf{L}})(x - y) \cdot \eta_k (y) dy,\]
    where the last equality uses that $\eta_k$ is supported in $O$ by Construction~\ref{cons-mollification}. 

    Let $V$ be the closed subset $\{X_{m+1} \leq 0\} \cup \cdots \cup\{X_{m+d} \leq 0\} \backslash \mathrm{Int}(\sigma)$ of $\mathbf{L}$. By the Portmanteau theorem and Proposition~\ref{prop:step4-cases}, we have \[\limsup_{k \to \infty} \RMA((v|_{\mathbf{L}})_k)(V) \leq \RMA(v|_{\mathbf{L}})(V) = 0.\] 
    
    So consider $U \coloneqq \mathbf{L} \backslash (\sigma \cup V)$, which is open. Note all $X_{m+1}, \cdots, X_{m+d}$ are positive on $U$. To prove \eqref{eq:step5-equiv-of-meas} it then suffices to show that for all $k$ we have \[\RMA((v|_{\mathbf{L}})_k)(U) = 0.\]
 
   Let us define another convex function $w: \mathbf{L} \to \mathbf{R}$ by 
    \[w(x) = \max_{0 \leq i \leq n-d-1} \{u(X_{m+1} - X_i, \cdots, X_{m+d} - X_i)\}.\] Crucially, $w$ only depends on $n-1$ linear forms, namely \[X_{m+1} - X_0, \cdots, X_{m+d} - X_0, X_0 - X_1, \cdots, X_0 - X_{n-d-1}.\]
    
    For any $x \in U$ and any $y \in O$, we have \begin{align*}\begin{split} &v|_{\mathbf{L}}(x - y) \\ =& \max_{0 \leq i \leq n-d-1} \big\{u((X_{m+1} - Y_{m+1}) - (X_i - Y_i), \cdots, (X_{m+d} - Y_{m+d}) - (X_i - Y_i)),\\
    &u(X_{m+1} - Y_{m+1}, \cdots, X_{m+d} - Y_{m+d})\big\}\\
    =& \max_{0 \leq i \leq n-d-1} \{u((X_{m+1} - Y_{m+1}) - (X_i - Y_i), \cdots, (X_{m+d} - Y_{m+d}) - (X_i - Y_i))\}\\
    =& \ w(x-y),\end{split}\end{align*}
    where we used that $-(X_i - Y_i) = Y_i - X_i > 0$ for all $0 \leq i \leq n-d-1$. 
    
    So for any $x \in U$ and any $k \in \mathbf{N}$, the following two convolutions are equal, \[(v|_{\mathbf{L}})_k(x) = w_k(x).\] Because $U$ is open, we have \begin{align*}\RMA((v|_{\mathbf{L}})_k)(U) = |D^2((v|_{\mathbf{L}})_k)(U)| = |D^2(w_k)(U)| = \RMA(w_k)(U) = 0, \end{align*} using that $w$ depends on less than $n = \dim(\mathbf{L})$ variables. So $\RMA((v|_{\mathbf{L}})_k)(U) = 0$ as desired.
\end{proof}
All in all, we have established Theorem~\ref{intro:thm-1}; more precisely, 
\begin{theorem}\label{thm:A}
    The function \[\psi^\na(\nu) = u(\frac{e}{b_1} \nu(s_1), \cdots, \frac{e}{b_d}\nu(s_d))\] is a continuous plurisubharmonic function on $X^\na$, and solves the non-Archimedean Monge--Ampère equation \[\NAMA(\psi^\na) = \mu^\na\] on the Berkovich space $X^\na$, where $\mu^\na$ is the Lebesgue measure on the essential skeleton $\sk(X)$. 
\end{theorem}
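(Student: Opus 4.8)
The plan is to assemble the five steps developed above into a single argument. First I would invoke Lemma~\ref{lem:psi-na-psh} to know $\psi^\na \in \mathrm{CPSH}(X^\na)$, so that $\NAMA(\psi^\na)$ makes sense, and then use the no-pluripolar-mass identity~\eqref{cor:no-pluripolar-mass} together with the fact that $U = \bigcap_{j}\{s_j' \neq 0\} \cap X$ is a dense Zariski open subset of $X$ to reduce the equation to $\NAMA(\psi^\na|_{U^\na}) = \mu^\na$ on $U^\na$. By Step~1, $\psi^\na|_{U^\na} = (\tau^\trop)^* v$ for the moment map $\tau\colon U^\na \to \mathbf{T}^\na$ and the convex function $v\colon \NRT \to \mathbf{R}$ constructed there, and by Step~2 the collection $\mathcal{C}$ is a cellular decomposition of $\Sigma_\tau$ (Proposition~\ref{prop:decomp}), with $\mu^\na = \Leb_{\sk(X)}$ pushing forward under $\tau^\trop$ to $\Leb_\Delta$ on $\Delta = \tau^\trop(\sk(X))$.

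Next I would pass to smooth convex approximations $v_k \to v$ and set $\psi_k = (\tau^\trop)^* v_k$. Applying the explicit formula of Definition~\ref{def:measure-on-each-cell} to the decomposition $\mathcal{C}$ gives $\NAMA(\psi_k) = \sum_\sigma (\tau^\trop|_C^{-1})_* \mathbf{1}_\sigma \RMA(v_k|_{\mathbf{L}_\sigma})$, the sum over $\sigma = \tau^\trop(C)$ for $C \in \mathcal{C}_n$; continuity of $\NAMA$ under locally uniform limits then identifies $\NAMA(\psi^\na|_{U^\na})$ with the weak limit of these measures, so it suffices to establish $\sum_\sigma \mathbf{1}_\sigma \RMA(v_k|_{\mathbf{L}_\sigma}) \to \Leb_\Delta$ weakly (pushed forward to the cells $C$). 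This is the real Monge--Amp\`{e}re statement~\eqref{step5:main-eq} to be proved.

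The conceptual heart is Step~4: a direct computation of the subgradient polytopes of $v|_{\mathbf{L}_\sigma}$, organized by the case analysis of Convention~\ref{convention:step4}. In Cases~1 and~2 the polytopes degenerate into hyperplanes, so $\RMA(v|_{\mathbf{L}_\sigma})$ is supported on $\mathbf{L}_\sigma \cap \Delta$ (Corollary~\ref{cor:supp-RMA}); in Case~3, when $\mathbf{L}_\sigma \not\supseteq \Delta$, the boundary condition $\sum_{i=1}^d \partial_i u = 0$ on $\partial \mathbf{R}^d_{\geq 0}$ from Theorem~\ref{thm:CTY} forces the subgradient polytope at points of $\mathbf{L}_\sigma \cap \partial\Delta$ into a proper subspace, killing the charge; and in Case~4, when $\mathbf{L}_\sigma \supseteq \Delta$, the subgradient polytope at $p \in \mathrm{relint}(\Delta)$ is a scaled standard $(n-d)$-simplex with edge length $\sum_{i=1}^d (\partial_i u)|_{L(p)}$, so the change of variables $X_{m+i} \mapsto \partial_i u$ combined with $\det(D^2 u)(\sum_i \partial_i u)^{n-d} = c$ yields $\RMA(v|_{\mathbf{L}_\sigma}) = c\,(n-d)!^{-1}\Leb_\Delta$ (Lemma~\ref{lem:mu_ac}). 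Summing over the $\#\{C \in \mathcal{C}_n : \sk(X) \subseteq C\}$ cells containing $\sk(X)$ and using the choice of $c$ in~\eqref{constant} gives $\sum_\sigma \RMA(v|_{\mathbf{L}_\sigma}) = \Leb_\Delta$, i.e.\ the weak convergence $\sum_\sigma \RMA(v_k|_{\mathbf{L}_\sigma}) \to \Leb_\Delta$ after passing to the limit.

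What I expect to be the main obstacle is Step~5: going from this convergence to the version with the indicator functions $\mathbf{1}_\sigma$, since mass of $\RMA(v_k|_{\mathbf{L}_\sigma})$ can leak out of the closed octant $\sigma$ in the limit. To control this I would not take an arbitrary smoothing but mollify $v|_{\mathbf{L}_\sigma}$ by $\eta_k(x) = k^n\eta(kx)$ with $\eta$ supported in the open octant $O = \{X_0, \dots, X_{n-d-1} > 0\}$ (Construction~\ref{cons-mollification}). On the open set $U = \mathbf{L}_\sigma \setminus (\sigma \cup V)$, where $V = \big(\{X_{m+1}\le 0\}\cup\cdots\cup\{X_{m+d}\le 0\}\big)\setminus \mathrm{Int}(\sigma)$, one checks directly that for $y \in O$ the term $U$ is never active, so $v|_{\mathbf{L}_\sigma}(x-y)$ agrees with $w(x-y)$ for $w = \max_{0 \le i \le n-d-1} U_i$, which depends on fewer than $n$ linear forms; hence $\RMA(w_k) \equiv 0$ and so $\RMA((v|_{\mathbf{L}_\sigma})_k)(U) = 0$ for all $k$. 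Combined with the Portmanteau bound $\limsup_k \RMA((v|_{\mathbf{L}_\sigma})_k)(V) \le \RMA(v|_{\mathbf{L}_\sigma})(V) = 0$ from Step~4, this gives $\RMA((v|_{\mathbf{L}_\sigma})_k)(\mathbf{L}_\sigma \setminus \sigma) \to 0$ (Proposition~\ref{prop:step5-no-extra-mass}). Feeding this back into the weak convergence from Step~4 produces $\sum_\sigma \mathbf{1}_\sigma \RMA((v|_{\mathbf{L}_\sigma})_k) \to \Leb_\Delta$, which is exactly $\NAMA(\psi^\na) = \mu^\na$, completing the proof.
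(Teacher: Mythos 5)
Your proposal is correct and follows essentially the same five-step route as the paper: reduction to $U^\na$ via the no-pluripolar-mass identity, tropicalization of $\psi^\na$, the explicit cellular decomposition $\mathcal{C}$, the subgradient-polytope case analysis with the boundary condition killing charges on $\partial\Delta$ and the PDE producing $c\,(n-d)!^{-1}\Leb_\Delta$ in Case~4, and finally the one-sided mollification of Construction~\ref{cons-mollification} to prevent mass from escaping the closed octants. No gaps; this is the paper's own argument assembled in the intended order.
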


\section{Interlude III: hybrid spaces}\label{sec:hybrid-spaces}

We review the construction and properties of hybrid spaces, as well as a measure convergence result on the hybrid space associated to a log Calabi--Yau pair.

\subsection{Construction of hybrid spaces}\label{hyb:setup}\hfill

We will work with the isotrivial hybrid space over $[0,1]$, studied in \cite{Ber09}, \cite{Jon16} and \cite{BJ17}. There is also a version of hybrid spaces over the unit disk that allows for non-isotrivial fibers in these works. 

For any $\rho \in [0,1]$, define a \emph{hybrid norm} on $\mathbf{C}$,
\[\|\cdot\|_{\mathrm{hyb}} \coloneq \max\{|\cdot|_0, |\cdot|_{\infty}\},\] where $|\cdot|_0$ is the trivial absolute value, and $|\cdot|_\infty$ is the usual Euclidean absolute value. The Berkovich spectrum $\mathcal{M}(\mathbf{C}, \|\cdot\|_{\mathrm{hyb}})$ is the set of all multiplicative seminorms on $\C$ bounded above by $\|\cdot\|_{\mathrm{hyb}}$. We define a map \[\rho\colon[0,1] \to \mathcal{M}(\mathbf{C}, \|\cdot\|_{\mathrm{hyb}}),\] given by $\tau \mapsto |\cdot|_{\infty}^\tau$. It is an exercise to see $\rho$ is an isomorphism. 

For any complex scheme of finite type $Y$, the hybrid space $Y^\hyb$ is the Berkovich analytification of $Y$ over $(\C, \|\cdot\|_{\mathrm{hyb}})$. We now briefly explain the construction of $Y^\hyb$ and refer to \cite[Appendix A]{BJ17} for details. For any affine chart $U \coloneqq \spec(A) \subseteq Y$, let $U^\hyb$ be the set of all multiplicative seminorms on $A$ \emph{bounded} above by $\|\cdot\|_{\mathrm{hyb}}$, equipped with the weakest topology making evaluation on any function $f \in A$ continuous. By gluing one obtains $Y^\hyb$. 

There is a continuous structural map \[\lambda\colon Y^\hyb \to \mathcal{M}(\mathbf{C}, \|\cdot\|_{\mathrm{hyb}}) \overset{\rho}{=} [0,1],\] which is open by \cite[Theorem C]{Jon16}, and in fact flat in a suitable sense; see \cite[Proposition 6.6.10]{LP24}. The zero fiber $\lambda^{-1}(0) \subseteq Y^\hyb$ is isomorphic to $Y^\na$, the Berkovich analytification of $Y$ over $(\mathbf{C}, \|\cdot \|_0)$. In addition, for any $\tau \in (0,1]$, the fiber $\lambda^{-1}(\tau)$ is homeomorphic to the complex analytification of $Y$, which we also denote by $Y$ for simplicity. Thus the name \emph{hybrid} for $Y^\hyb$. 

\begin{example}\label{eg:cont-fn-on-hybrid}
    By definition of topology on $Y^\hyb$, if $U \subseteq Y$ is Zariski open and $f$ is a regular function on $U$, the following function \[x \mapsto \begin{cases}
        |f(x)|^{\lambda(x)} & \lambda(x) \neq 0,\\
        |f(x)| & \lambda(x) = 0
    \end{cases}\]
    is continuous on $U^\hyb \subseteq X^\hyb$. 
\end{example}

\subsection{Hybrid continuity}\hfill

We briefly review the notion of continuous hybrid metrics on line bundles over $Y^\hyb$. This has been developed in \cite{Fav20} and \cite{PS23} in the setting of hybrid space over the unit disk, and works almost verbatim in the isotrivial setting over $[0,1]$. We now give an explicit description. 

Let $L$ be a line bundle over $Y$. It induces a line bundle $L^\hyb$ on $X^\hyb$, such that \[L^\hyb|_{\lambda^{-1}(0,1]} = L \times (0,1], \text{ and } L^\hyb|_{\lambda^{-1}(0)} = L^\na.\]

\begin{convention}\label{conv:tau-and-t}To agree with the scaling in the literature, we use the parameter $t$ on $\mathcal{M}(\mathbf{C}, \|\cdot\|_{\hyb})$, satisfying $t = e^{-1/\tau}$ and we set $t = 0$ when $\tau = 0$.\end{convention}

\begin{definition}(\cite[Definition 2.1]{PS23})
    A hybrid metric $\phi$ on $L^\hyb$ is a function on the complement of the zero section in the total space of $L^\hyb$, which respects restriction and satisfies $\phi \circ (fs) = |f| \cdot (\phi \circ s)$ for any local section $s$ of $L$ and local function $f$. A hybrid metric is continuous if it is continuous as a function.
\end{definition}

Just like in the complex case, if a Zariski open set $U \subseteq X$ trivializes $L$, say by some local section $s$, then we can locally identify the metric $\phi$ on $L^\hyb$ with a function $\psi$ on $U^\hyb$, such that for any other local section $s'$ of $L$ over $U$, we have \[\phi \circ s' =|s'/s| \cdot e^{-\psi},\]
which makes sense because $s'/s$ is a regular function and so we can take its absolute value. This allows us to characterize continuous hybrid metrics as follows. 

\begin{proposition}\label{prop:equiv-hybrid-func}
    A hybrid metric $\phi$ is continuous on $L^\na$ if and only if the function $\psi$ is continuous on $U^\hyb$ for any Zariski open set $U \subseteq X$ that trivializes $L$.
\end{proposition}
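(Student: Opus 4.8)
The plan is to prove Proposition~\ref{prop:equiv-hybrid-func} as a local-to-global statement that reduces the continuity of the hybrid metric $\phi$ to the continuity of the associated local potential functions, exploiting that continuity is a local property on $L^\hyb$ and that the trivializing opens $U^\hyb$ cover $L^\hyb$ away from the zero section. First I would fix a covering: since $L$ is a line bundle on the quasi-projective variety $X$, we can choose finitely many Zariski opens $U_a \subseteq X$ trivializing $L$ via sections $s_a$, so that the corresponding $(L^\hyb)^\times|_{U_a^\hyb}$ cover the complement of the zero section in $L^\hyb$. On each such piece the hybrid metric is, by the displayed formula $\phi \circ s' = |s'/s_a|\cdot e^{-\psi_a}$, a product of the continuous function $|s'/s_a|$ on $U_a^\hyb$ (continuous by Example~\ref{eg:cont-fn-on-hybrid}, since $s'/s_a$ is regular on $U_a \cap \{s' \neq 0\}$) with $e^{-\psi_a}$; so $\phi$ restricted to this chart is continuous if and only if $\psi_a$ is continuous on $U_a^\hyb$. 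Gluing these statements over the finite cover gives the proposition.

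The key steps, in order, are: (1) observe that a function on the total space $(L^\hyb)^\times$ is continuous if and only if its restriction to each member of an open cover is continuous; (2) translate, on a trivializing chart $U_a^\hyb$, between $\phi$ and the potential $\psi_a$ using $\phi \circ s_a = e^{-\psi_a}$ and more generally $\phi \circ s' = |s'/s_a| e^{-\psi_a}$, noting that $x \mapsto \log|s'/s_a(x)|^{\lambda(x)}$ (with the $\tau=0$ convention) is continuous on $U_a^\hyb$; (3) deduce that continuity of $\phi$ on the chart is equivalent to continuity of $e^{-\psi_a}$, hence of $\psi_a$, since $t \mapsto e^{-t}$ is a homeomorphism $\overline{\R} \to [0,\infty)$ (being careful that $\psi_a$ may take the value $+\infty$, matching a zero of $e^{-\psi_a}$, which is exactly the singular case and still continuous in $\overline{\R}$); (4) for the forward direction, given continuity of $\phi$, restrict to each chart to conclude $\psi_a$ continuous; for the backward direction, given continuity of every $\psi_a$, conclude continuity of $\phi$ on each chart and glue. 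One should also check the statement is independent of the choice of trivializing section $s'$, which follows because any two differ by an invertible regular function whose hybrid absolute value is continuous.

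The main obstacle I expect is bookkeeping around the structural map $\lambda$ and the piecewise definition of hybrid absolute values: one must verify that $|s'/s_a|$, interpreted as $x \mapsto |(s'/s_a)(x)|^{\lambda(x)}$ for $\lambda(x) \neq 0$ and the non-Archimedean absolute value for $\lambda(x) = 0$, is genuinely continuous across the fiber $\lambda^{-1}(0)$ — this is precisely the content of Example~\ref{eg:cont-fn-on-hybrid}, so it can be cited, but one should make sure the product $|s'/s_a| \cdot e^{-\psi_a}$ does not produce a $0 \cdot \infty$ indeterminacy that breaks continuity at points where $s'$ vanishes on the zero fiber; here one argues that at such points $\phi$ is finite (the metric is a norm on each fiber) and handles the limit directly, or equivalently restricts attention to the chart where the relevant section is invertible. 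A secondary, purely expository point is to make the ``respects restriction'' clause of the definition of a hybrid metric do its job: it is what guarantees the chartwise potentials $\psi_a$ are compatible on overlaps and that gluing continuous chartwise data yields a well-defined continuous $\phi$. None of this is deep; the proof is essentially a formal unwinding of definitions together with the one genuinely analytic input, namely the continuity of hybrid absolute values of regular functions.
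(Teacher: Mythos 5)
The paper states this proposition without any proof, treating it as an immediate consequence of the definitions and of the continuity of hybrid absolute values of regular functions; your argument is precisely that routine verification, carried out correctly and with the right care at the zero locus of sections and on the fiber $\lambda^{-1}(0)$. No gap to report.
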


\begin{example}\label{eg:FS-hybrid-metric}(\cite[Proposition 3.5]{PS23})
    Suppose for some $m > 0$ there are global sections $\{s_0, \cdots, s_N\} \in H^0(Y, mL)$ without common zeros. A fundamental example of a continuous hybrid metric is the hybrid Fubini--Study metric given by \[\phi_t = \begin{cases}
        (2m\log{t^{-1}})^{-1}\log(\sum_{0 \leq i \leq N} |s_i|^2) & t \neq 0 \\[3pt]
        m^{-1}\max_{0 \leq i \leq N} (\log|s_i|) & t = 0.
    \end{cases}\] Note that $\phi_0$ is a non-Archimedean Fubini--Study metric, as in Definition~\ref{def:triv-metric-over-triv-valued-field}. 
\end{example}

\subsection{Measure convergence on hybrid spaces}\label{sec:meas-convergence}\hfill

We return to the smooth log Calabi--Yau pair $(\bar{X}, D)$ from Setup \ref{setup}, where $\bar{X}$ is a smooth projective Fano variety of dimension $n$ and $D$ is a reduced SNC anticanonical divisor whose dual complex is a $d$-dimensional simplex with $d < n$. Recall that $X = \bar{X} \backslash D$ has a non-vanishing regular $n$-form $\Omega \in H^0(X, K_{\bar{X}}|_X)$ with a simple pole along $D$. For any $t \neq 0$, we equip the fiber $\lambda^{-1}(t) \subseteq X^\hyb$ with a smooth positive measure (also a volume form) \[\mu_t \coloneqq \frac{(\sqrt{-1})^{n^2}\Omega \wedge \bar{\Omega}}{(2\pi \log t^{-1})^d}.\] Recall also that $\mu^\na$ is the Lebesgue measure on the essential skeleton $\sk(X) \subseteq X^\na = \lambda^{-1}(0).$ The following convergence theorem connects these measures, and can be deduced from \cite[Theorem A]{Shi22}. See also \cite{AN25} for a stronger version of this theorem. 

\begin{theorem}\label{thm:Shivaprasad}
    On $X^\hyb$, the measures $\mu_t$ converge weakly to the measure $\mu^\na$. 
\end{theorem}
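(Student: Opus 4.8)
The plan is to deduce Theorem~\ref{thm:Shivaprasad} from \cite[Theorem A]{Shi22} by checking that the log Calabi--Yau pair $(\bar{X}, D)$ of Setup~\ref{setup} satisfies its hypotheses and that the normalization in the definition of $\mu_t$ is precisely the one dictated by that theorem. First I would recall the content of \cite[Theorem A]{Shi22}: for an snc log Calabi--Yau pair, the fiberwise measures $(\sqrt{-1})^{n^2}\Omega\wedge\bar{\Omega}$ on the hybrid space, divided by $(\log t^{-1})^{r}$ with $r$ the dimension of the essential skeleton, converge weakly to a measure supported on the skeleton which is the Lebesgue measure of its natural integral affine structure, up to an explicit multiplicative constant collecting the $2\pi$ factors coming from the simple poles of $\Omega$. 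The hybrid space there may be the one over a disk, but since everything here is isotrivial the passage to $X^{\hyb}$ over $[0,1]$ from \S\ref{hyb:setup} is routine (the structural map $\lambda$ is flat and the fibers are constant), so this step would be spelled out only briefly.

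Next I would verify the hypotheses. The pair $(\bar{X}, D)$ has $\bar{X}$ smooth projective Fano, $D$ reduced SNC with $K_{\bar{X}}+D=0$, and by the Lefschetz hyperplane and adjunction argument recalled in \S\ref{setup} the intersection strata of $D$ are irreducible (the deepest one $Z$ being a compact Calabi--Yau, the others Fano) — this is exactly the geometric input needed. The form $\Omega\in H^0(X, K_{\bar{X}}|_X)$ with a simple pole along $D$ is the holomorphic volume form to which the theorem is applied. By \S\ref{sec:essential-ske} the essential skeleton is $\sk(X)=\QM(\bar{X}, D)\simeq\mathbf{R}^d_{\geq 0}$, hence of dimension $d$; so the correct normalizing power of $\log t^{-1}$ is $d$, matching the exponent in $\mu_t=(2\pi\log t^{-1})^{-d}\,(\sqrt{-1})^{n^2}\Omega\wedge\bar{\Omega}$.

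Finally I would pin down the constant. On an snc model, near a stratum cut out by $z_1=\cdots=z_k=0$ the form $\Omega\wedge\bar{\Omega}$ is $\prod_i\frac{dz_i}{z_i}\wedge\frac{d\bar{z}_i}{\bar{z}_i}$ times a smooth nonvanishing form, and integrating each $\frac{dz_i\wedge d\bar{z}_i}{|z_i|^2}$ over the torus fibers produces one factor of $2\pi\log t^{-1}$ per divisor component through which the stratum passes; collecting these over the $d$ components meeting along $Z$ gives the normalization $(2\pi\log t^{-1})^{d}$ and identifies the limit with $\Leb_{\sk(X)}=\mu^\na$ for the integral affine structure on $\NRT$ underlying Definition~\ref{def:measure-on-each-cell}.

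The main obstacle I anticipate is precisely this bookkeeping: making the limit come out as \emph{literally} $\mu^\na$, and not merely a positive multiple of it, requires reconciling Shivaprasad's affine normalization with the integral affine structure used throughout \S\ref{sec:non-archimedean-metric}, together with the $2\pi$ per divisor component. The remaining points — verifying the geometric hypotheses and the disk-versus-isotrivial comparison — are standard.
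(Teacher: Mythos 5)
Your proposal matches the paper's treatment: the paper gives no proof of this theorem beyond stating that it "can be deduced from \cite[Theorem A]{Shi22}" (with \cite{AN25} cited for a stronger version), which is exactly the deduction you outline. Your additional bookkeeping — verifying the snc log Calabi--Yau hypotheses, identifying the normalizing exponent with $d = \dim \sk(X)$, and tracking the $(2\pi\log t^{-1})$ factor per divisor component through the deepest stratum — is the correct content of that citation and is consistent with the normalization $\mu_t = (2\pi\log t^{-1})^{-d}(\sqrt{-1})^{n^2}\Omega\wedge\bar{\Omega}$ used in the paper.
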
 This justifies why the measure $\mu^\na$ is a natural non-Archimedean replacement for the volume form $\mu^{\mathrm{a}} = (\sqrt{-1})^{n^2}\Omega \wedge \bar{\Omega}$. 

\section{Hybrid continuity}\label{sec:hybrid-continuity}

In this section we prove Theorem~\ref{intro:thm-2}, recalling its setup briefly first. Let $(\bar{X}, D)$ be any of the following pairs, 

\begin{enumerate}
    \item(Tian--Yau) $\bar{X}$ is a smooth projective Fano variety of dimension $n$, and $D$ is an irreducible smooth anticanonical divisor; 
    \item(Collins--Li) $\bar{X}$ is a smooth projective Fano variety of dimension $n$, and $D$ is an anticanonical divisor with irreducible components $D_1, D_2$ such that $D_1 \cap D_2$ is irreducible; 
    \item(Calabi model space) let $Y$ be a smooth projective Fano variety of dimension $n$, with a reduced SNC anticanonical divisor $D$ whose deepest intersection stratum $Z$ is irreducible. Then $\bar{X}$ is the relative normal bundle $N_{Z/Y}$ and $D$ is identified with the zero section of $\bar{X}$. 
\end{enumerate}

In all cases above, we set $X = \bar{X} \backslash D$. By \cite{TY90} and \cite{CL24}, in the first two cases $X$ admits a complete Archimedean Calabi--Yau potential $\psi^{\mathrm{a}}$. In the third case, by \cite{CTY24} there is an Archimedean Calabi--Yau potential near $D$, and we extend it to any smooth potential $\psi^{\mathrm{a}}$ on $X$ bounded away from $D$. The choice of extension does not matter. By Theorem~\ref{intro:thm-1}, in each case above, the associated Berkovich analytification $X^\na$ likewise carries a non-Archimedean complete Calabi--Yau potential $\psi^\na$. We will amalgamate the two potentials $\phi^{\mathrm{a}}, \phi^\na$ into a function on the hybrid space $X^\hyb$ and show it is continuous. This hybrid continuity suggests that non-Archimedean Calabi--Yau potentials should be used to prescribe and capture limiting behavior of their Archimedean counterparts.

In the projective and non-isotrivial setting, such hybrid continuity has been established in \cite{Li25b} and \cite{Li25a}. For the case of abelian varieties this was achieved in \cite{GO24}. Our proof amounts to using the fact that the complete Archimedean Calabi--Yau potentials are governed by the generalized Calabi ansatz, and then finding an appropriate scaling. 

\subsection{The Tian--Yau potential}\label{Conv:TY-metric}\hfill

Let $(\bar{X}, D)$ be a pair where $\bar{X}$ is a smooth projective Fano variety of dimension $n$, and $D$ is a smooth irreducible anticanonical divisor, cut out by a section $s \in H^0(\bar{X}, -K_{\bar{X}})$. Then $X = \bar{X} \backslash D$ is an affine Calabi--Yau variety, which has a non-vanishing regular $n$-form $\Omega$ with a simple pole along $D$. Fix a large integer $e$ such that $M \coloneqq - eK_{\bar{X}}$ is very ample and $\dim H^0(\bar{X},M) \geq n + 1$. Because $s^e$ is a global non-vanishing section of $M|_X$, we can identify a (continuous, smooth) metric $\phi$ on $M|_X$ with a (continuous, smooth) function $\phi - e\log|s|$ on $X$. 

In addition, choose a finite subset $\mathcal{S}' \coloneqq \{s_0', \cdots, s_m'\} \subseteq H^0(\bar{X}, M)$ with $m \geq n$ as in Step \hyperref[(1a)]{(1a)}, that is, 
\begin{itemize}
    \item $\mathcal{S} \coloneqq \mathcal{S}' \cup \{s^e\}$ is a basis of $H^0(\bar{X}, M)$;
    \item the intersection of the zero loci of any $k$ members of $\mathcal{S}$ is smooth and connected for $k \in [1,n-1]$, is reduced and of dimension $0$ when $k = n$, and is empty when $k \geq n+1$. 
\end{itemize}
In particular, the sections in $\mathcal{S}'$ have no common zero. 

\begin{construction}[\cite{TY90}]\hfill 

We now review the construction of the Tian--Yau potential, which is a smooth, strictly plurisubharmonic function $\psi_\TY$ on $X$ solving the complex MA equation \[(dd^c \psi_\TY)^n = \mathrm{const} \cdot \Omega \wedge \bar{\Omega}.\] This potential $\psi_\TY$ decomposes as \[\psi_\TY = (\phi_{\FS}^{\mathrm{a}} - e\log|s|+ \psi_{\mathrm{CY}} )^{(n+1)/n} + \psi_{\mathrm{pos}} + \theta,\] where,
\begin{enumerate}[label=(a\arabic*), ref=a\arabic*]
    \item\label{item:a1} $\phi_{\FS}^{\mathrm{a}}$ is the Archimedean Fubini--Study metric on $M$ associated to the sections $\{s_0', \cdots, s_m'\}$, and $\phi_{\FS}^{\mathrm{a}} - e\log|s|$ is the corresponding function on $X$;
    \item\label{item:a2} $\psi_{\CY}$ is a smooth function on $\bar{X}$ whose restriction to $D$ satisfies \[\omega_D + dd^c(\psi_\CY|_D) > 0 \text{ and }\mathrm{Ric}(\omega_D + dd^c (\psi_\CY|_D)) = 0,\] where $\omega_D = dd^c(\phi_{\FS}^{\mathrm{a}}|_D) = 0$ is a reference K\"{a}hler form on $D$; 
    \item\label{item:a3}  $\psi_{\mathrm{pos}}$ is a compactly supported smooth positive function on $X$ that makes the reference potential \[\psi_{\mathrm{ref}} \coloneqq (\phi_{\FS}^{\mathrm{a}} - e \log|s| + \psi_{\CY})^{(n+1)/n} + \psi_{\mathrm{pos}}\] strictly plurisubharmonic on $X$;
    \item\label{item:a4} $\theta$ is a smooth function on $X$ solving the equation \[(dd^c(\psi_\mathrm{ref} + \theta))^n = \mathrm{const} \cdot \Omega \wedge \bar{\Omega},\] and $\theta$ is bounded by \cite{Hei12}.
\end{enumerate}
\end{construction}

For any $t \neq 0$, we equip $\lambda^{-1}(t) \simeq X$ with a rescaled Tian--Yau potential,
\[\psi_t \coloneqq \frac{\psi_\TY}{(\log{t}^{-1})^{1/n}}.\]
The non-standard scaling $(\log{t}^{-1})^{1/n}$ is motivated by the measure convergence result (Theorem~\ref{thm:Shivaprasad}), and will be further discussed in Remark~\ref{rem:scaling}. As we will see in Proposition~\ref{prop:TY-hybrid}, this is the correct scaling in order to obtain hybrid continuity.

In addition, we equip the central fiber $\lambda^{-1}(0) \simeq X^\na$ with the potential $\psi^\na$, the solution to the equation (\ref{eq:NA-MA}) from Theorem~\ref{intro:thm-1}. Since $D$ is irreducible, the discussion in \S \ref{sec:generalized-calabi-ansatz} for the case when $d = 1$ allows us to write down $\psi^\na$ very explicitly. Indeed, define a non-Archimedean Fubini--Study metric on $M^\na$ associated to the chosen sections $\mathcal{S}'$: \[\phi_{\FS}^\na \coloneqq \max_{0 \leq j \leq m} \{\log|s_j'|\},\] which equals the trivial metric on $M^\na$ by Remark \ref{def:triv-metric-over-triv-valued-field}. The solution to the NA MA equation $\NAMA(-) = \mu^\na$ is then simply \[\psi^\na(\nu) = \nu(s^e)^{(n+1)/n} = (\phi_{\FS}^{\na} - e\log|s|)^{(n+1)/n},\]where the notation $\nu(s^e)$ is as in Definition~\ref{def:triv-metric-over-triv-valued-field}. 

\begin{proposition}\label{prop:TY-hybrid}
    The function $\psi^\hyb \coloneqq (\psi_t, \psi_0 = \psi_\na)$ is continuous on $X^\hyb$.
\end{proposition}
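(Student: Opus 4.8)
The plan is to reduce the statement to the behaviour of $\psi_t$ near the central fibre $\lambda^{-1}(0) = X^\na$ and then to read that behaviour off from the continuity of the hybrid Fubini--Study metric. Over $\lambda^{-1}\bigl((0,1]\bigr)$ the hybrid space is the product $X \times (0,1]$, on which $\psi^\hyb$ is fibrewise a positive multiple --- depending continuously on $\tau$ --- of the fixed smooth function $\psi_\TY$ on $X$, hence continuous there; and $\psi^\hyb|_{\lambda^{-1}(0)} = \psi^\na$ is continuous on $X^\na$ by Theorem~\ref{intro:thm-1}. So the only point left is continuity at each $\nu_0 \in X^\na$, i.e. that $\psi^\hyb(x_\alpha) \to \psi^\na(\nu_0)$ along every net $x_\alpha \to \nu_0$, which by the above may be assumed to lie in $\lambda^{-1}\bigl((0,1]\bigr)$ with $\lambda(x_\alpha) \to 0$.

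For this I would decompose the Tian--Yau potential as $\psi_\TY = \bigl(\phi_{\FS}^{\mathrm{a}} - e\log|s| + \psi_\CY\bigr)^{(n+1)/n} + \bigl(\psi_{\mathrm{pos}} + \theta\bigr)$. The summands $\psi_\CY$ (smooth on $\bar X$), $\psi_{\mathrm{pos}}$ (compactly supported) and $\theta$ (bounded by \cite{Hei12}) are all bounded on $X$, whereas $\phi_{\FS}^{\mathrm{a}} - e\log|s|$ is exactly the local expression, through the trivialisation $s^e$ of $M|_X = (-eK_{\bar X})|_X$, of the Archimedean Fubini--Study metric on $M$ attached to $\{s_0', \dots, s_m'\}$. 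By Example~\ref{eg:FS-hybrid-metric} the associated hybrid Fubini--Study metric $\phi_{\FS,t}^\hyb$, which specialises to the trivial metric $\phi_{\FS}^\na$ at $t = 0$, is continuous; hence by Proposition~\ref{prop:equiv-hybrid-func} its potential through $s^e$ --- the function equal to $(\log t^{-1})^{-1}\bigl(\phi_{\FS}^{\mathrm{a}} - e\log|s|\bigr)$ on the positive fibres and to $\phi_{\FS}^\na - e\log|s| = \nu(s^e)$ on $\lambda^{-1}(0)$ --- is continuous on $X^\hyb$. Thus along $x_\alpha \to \nu_0$ we get $(\log t_\alpha^{-1})^{-1}\bigl(\phi_{\FS}^{\mathrm{a}} - e\log|s|\bigr)(x_\alpha) \to \nu_0(s^e)$; adding the bounded term $\psi_\CY$ does not change the limit, and after a harmless additive normalisation of $\phi_{\FS}^{\mathrm{a}}$ keeping the argument in $[0,\infty)$ --- where $x \mapsto x^{(n+1)/n}$ is continuous --- the leading term of $\psi_\TY$ along the net is seen to be asymptotic to $(\log t_\alpha^{-1})^{(n+1)/n}$ times $\nu_0(s^e)^{(n+1)/n}$, while $\psi_{\mathrm{pos}} + \theta$ stays bounded and so drops out once the same power of $\log t_\alpha^{-1}$ is divided out. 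Since the scaling in the definition of $\psi_t$ is chosen to cancel exactly that power, and $\psi^\na(\nu_0) = \nu_0(s^e)^{(n+1)/n}$ by the explicit $d = 1$ formula recalled above, one obtains $\psi^\hyb(x_\alpha) \to \psi^\na(\nu_0)$, completing the argument.

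I expect the delicate point to be precisely this rescaling: showing, uniformly along an arbitrary net approaching $\lambda^{-1}(0)$ --- a region where $\phi_{\FS}^{\mathrm{a}} - e\log|s|$ itself blows up --- that the bounded pieces $\psi_\CY$, $\psi_{\mathrm{pos}}$, $\theta$ are genuinely negligible after dividing by the relevant power of $\log t^{-1}$, and that the exponents balance so the limit is $\nu(s^e)^{(n+1)/n}$ rather than $0$ or $\infty$; this is what pins down the ``non-standard'' power of $\log t^{-1}$ used to define $\psi_t$. The boundedness of $\theta$ from \cite{Hei12} is the only non-formal input. The remaining cases of Theorem~\ref{intro:thm-2} should go the same way, with the $d = 1$ Calabi ansatz replaced by the $d = 2$ solution of \cite{CL24} or the general solution of \cite{CTY24}, and the scaling adjusted correspondingly.
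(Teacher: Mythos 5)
Your proposal is correct and follows essentially the same route as the paper: decompose $\psi_\TY$ into the Fubini--Study leading term and the bounded corrections $\psi_\CY,\psi_{\mathrm{pos}},\theta$, invoke the hybrid continuity of the Fubini--Study metric (Example~\ref{eg:FS-hybrid-metric}) to control the leading term along a net converging into $\lambda^{-1}(0)$, and observe that the bounded pieces vanish after dividing by $(\log t^{-1})^{(n+1)/n}$. Your explicit remarks on continuity over $\lambda^{-1}((0,1])$ and on keeping the argument of $x\mapsto x^{(n+1)/n}$ nonnegative are points the paper leaves implicit, but they do not change the argument.
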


\begin{proof}
    Consider any net $p_k$ that converges to some point $\nu \in \lambda^{-1}(0)$. By passing to a subnet, we can assume $p_k$ lies in some Archimedean fiber $\lambda^{-1}(t_k)$ where $t_k \downarrow 0$ as $k \to \infty$.  We must show \[\frac{\psi_{t_k}(p_k)}{\log{t_k^{-1}}} \to \psi_0(\nu).\]
    By Example~\ref{eg:FS-hybrid-metric} we have \[\lim_{k \to \infty}(\phi_{\FS}^{\mathrm{a}} - e\log|s|)(p_k) = (\phi_{\FS}^\na - e\log|s|)(\nu).\]
    Now, because the functions $\psi_{\mathrm{pos}}$, $\psi_\CY$, and $\theta$ are bounded, we have \begin{align*}\lim_{k \to \infty} \frac{\psi_{t_k} (p_k)}{\log{t_k^{-1}}} =& \lim_{k \to \infty} \frac{\psi_\TY(p_k)}{(\log{t_k^{-1}})^{(n+1)/n}} = \lim_{k \to \infty} \Big(\frac{\phi_{\FS}^{\mathrm{a}} - e\log|s|}{\log{t_k^{-1}}}\Big)^{(n+1)/n}(p_k)\\ =& (\phi_{\FS}^\na - e\log|s|) ^{(n+1)/n}(\nu) = \psi_0(\nu),\end{align*}completing the proof. 
\end{proof}

\subsection{The Collins--Li potential}\label{Conv:CL-metric}\hfill

We restate Setup \ref{setup} when $d = 2$. Let $(\bar{X}, D)$ be a pair where $\bar{X}$ is a smooth projective Fano variety, with $D \coloneqq D_1 + D_2$ a reduced simple normal crossing anticanonical divisor. For $i = 1, 2$, let $s_i \in H^0(\bar{X}, b_iL)$ be the defining section of $D_i$. Recall that $(b_1 + b_2) L = -K_{\bar{X}}$. Fix some positive integer $e$ divisible by $b_1, b_2, b_1 + b_2$ such that $eL$ is very ample and $\dim H^0(\bar{X}, eL) \geq n + 2$. Choose sections $\mathcal{S}' \coloneqq \{s_0', \cdots, s_m'\} \subseteq H^0(\bar{X}, eL)$ with $m \geq n$ as in Step \hyperref[(1a)]{(1a)}, that is,
\begin{itemize}
    \item $\mathcal{S} \coloneqq \mathcal{S}' \cup \{s_1^{e/b_1}, s_2^{e/b_2}\}$ is a basis of $H^0(\bar{X}, eL)$;
    \item the intersection of the zero loci of any $k$ members of $\mathcal{S}$ is smooth and connected for $k \in [1,n-1]$, is reduced and of dimension $0$ when $k = n$, and is empty when $k \geq n+1$.
\end{itemize}

In particular, the sections in $\mathcal{S}'$ have no common zero. 

\begin{construction}[\cite{CL24}]\hfill

    Let $\phi_{\FS}^{\mathrm{a}}$ be the Fubini--Study metric on $(\bar{X}, eL)$ associated to $\mathcal{S}'$.
    For $i = 1, 2$, define a function on $X$, \[z_i = \phi_{\FS}^{\mathrm{a}} - \frac{e}{b_i} \log|s_i|.\] Note that $s_{2}^{b_1} / s_{1}^{b_2}$ is a regular function on $X$, so one can also define a pluriharmonic function $\widetilde{z_{1}} = \frac{1}{b_2} \log|s_{2}^{b_1} / s_{1}^{b_2}|$ on $X$. Similarly define $\widetilde{z_2}$. 

    The Collins--Li potential $\psi_\CL$ is a smooth, strictly plurisubharmonic function on $X$ such that \[(dd^c \psi_\CL)^n = \mathrm{const} \cdot \Omega \wedge \bar{\Omega},\] where $\Omega$ is a non-vanishing regular $n$-form on $X$ with a simple pole along $D$. This potential can be decomposed into three parts, \[\psi_\CL = \psi_{\infty} + \psi_{\mathrm{pos}} + \theta,\] where, 
    \begin{enumerate}[label=(b\arabic*), ref=b\arabic*]
        \item\label{item:b1} $\psi_{\infty}$ is a smooth extension of a function $\psi_{\infty, 0}$ defined on a punctured neighborhood of $D$ in $X$, that is, the region where $1 \ll \max(z_1, z_2)$. The local $(k, \alpha)$-norm of the difference between $\psi_{\infty, 0}$ and the generalized Calabi ansatz $u$ from Theorem~\ref{thm:CTY} is \[\|\psi_{\infty, 0} - u\|_{k, \alpha, \mathrm{loc}} = O(z_1^{-\frac{2n-1}{n-1}}) = O(z_2^{-\frac{2n-1}{n-1}});\]
    \item\label{item:b2}$\psi_{\mathrm{pos}}$ is a compactly supported function on $X$ such that $\psi_\infty + \psi_{\mathrm{pos}}$ is strictly plurisubharmonic;
    \item\label{item:b3} $\theta$ is the output of the Tian--Yau--Hein package; it is a smooth function on $X$ with local $(k, \alpha)$-norm $\|\theta\|_{k,\alpha, \mathrm{loc}} = O((z_1^2 + z_2^2 + 1)^{-\frac{q(n+2)}{4n}})$ for any $q \in (0, \frac{2n-4}{n+2})$ and any $k$. 
    \end{enumerate} 
\end{construction}

Similar to \S \ref{Conv:TY-metric}, for $t \neq 0$, we equip $\lambda^{-1}(t) \simeq X$ with a rescaled Collins--Li potential, where the scaling is again motivated by Theorem \ref{thm:Shivaprasad}, 
\[\psi_t \coloneqq \frac{\psi_\mathrm{CL}}{(\log{t^{-1}})^{2/n}}.\] 

On the other hand, by Example~\ref{eg:FS-hybrid-metric}, for $i = 1, 2$ the function \[z_i \cdot (\log{t^{-1}})^{-1}\text{ on }\lambda^{-1}(\{t \neq 0\}) \subseteq X^\hyb\] extends continuously to all of $X^\hyb$, where on $\lambda^{-1}(0)$ it equals \[\max_{0 \leq j \leq m} \log|s_j'| - \frac{e}{b_i}\log|s_i|.\] We denote this extension or its restriction to any fiber as $z_i$. Then, as obtained in Theorem \ref{intro:thm-1}, the central fiber $\lambda^{-1}(0) \simeq X^\na$ has a potential that solves (\ref{eq:NA-MA}), namely \[\psi^\na = u(z_1, z_2).\]

\begin{proposition}\label{prop:CL-hybrid}
    The function $\psi^\hyb \coloneqq (\psi_t, \psi_0 = \psi^\na)$ is continuous on $X^\hyb$. 
\end{proposition}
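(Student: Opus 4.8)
The plan is to mimic the argument for Proposition~\ref{prop:TY-hybrid} essentially line by line, with the generalized Calabi ansatz $u$ on $\mathbf{R}^2$, which is homogeneous of degree $(n+2)/n$, playing the role that the explicit power $(\,\cdot\,)^{(n+1)/n}$ played there. Concretely, I would take an arbitrary net $p_k \to \nu \in \lambda^{-1}(0) = X^\na$; after passing to a subnet I may assume $p_k \in \lambda^{-1}(t_k)$ with $t_k \downarrow 0$, and write $\ell_k \coloneqq \log t_k^{-1} \to \infty$. By Convention~\ref{conv:tau-and-t}, continuity at $\nu$ amounts to $\psi_{t_k}(p_k)/\ell_k \to \psi_0(\nu)$, i.e.\ $\psi_\CL(p_k)/\ell_k^{(n+2)/n} \to u(z_1(\nu),z_2(\nu))$.

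First I would record the two ingredients that need no new work. The hybrid Fubini--Study continuity (Example~\ref{eg:FS-hybrid-metric}), applied exactly as in the discussion preceding the proposition, gives $z_i(p_k)/\ell_k \to z_i(\nu) \geq 0$ for $i = 1,2$; in particular these ratios stay bounded. In the decomposition $\psi_\CL = \psi_\infty + \psi_{\mathrm{pos}} + \theta$, the function $\psi_{\mathrm{pos}}$ is compactly supported on $X$ and $\theta$ is globally bounded by~\ref{item:b3}, so $(\psi_{\mathrm{pos}}(p_k) + \theta(p_k))/\ell_k^{(n+2)/n} \to 0$. Thus everything reduces to controlling $\psi_\infty(p_k)/\ell_k^{(n+2)/n}$.

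For this I would split into two cases according to whether $\max(z_1(p_k),z_2(p_k))$ is bounded along the subnet. If it is bounded, then the subnet lies in a fixed compact subset of $X$: the locus $\{\max(z_1,z_2)\le C\}$ is closed in $\bar X$ because $\max(z_1,z_2)\to\infty$ near $D$, so its closure misses $D$ and is compact. Hence $\psi_\infty(p_k)$ is bounded and the quotient tends to $0$; on the other hand $z_i(\nu)=\lim z_i(p_k)/\ell_k = 0$, so $u(z_1(\nu),z_2(\nu))=0$ and the two sides agree. If instead $\max(z_1(p_k),z_2(p_k))\to\infty$, then eventually $p_k$ lies in the punctured neighborhood of $D$ where $\psi_\infty$ coincides with the smooth extension of $\psi_{\infty,0}$, and by~\ref{item:b1} we have $|\psi_\infty(p_k) - u(z_1(p_k),z_2(p_k))| = O\big(z_1(p_k)^{-(2n-1)/(n-1)}\big)\to 0$; since this error is $o(\ell_k^{(n+2)/n})$, it remains to evaluate $\lim u(z_1(p_k),z_2(p_k))/\ell_k^{(n+2)/n}$. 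Here I invoke the degree $(n+2)/n$ homogeneity from Theorem~\ref{thm:CTY}/Proposition~\ref{prop:extension} to rewrite this quotient as $u\big(z_1(p_k)/\ell_k,\, z_2(p_k)/\ell_k\big)$, and then continuity of $u$ on $\mathbf{R}^2$ together with the first step yields the limit $u(z_1(\nu),z_2(\nu))$, as desired.

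The only point requiring genuine care — and the one I expect to be the main obstacle — is the case distinction on $\max(z_1,z_2)$: one must ensure the asymptotic estimate~\ref{item:b1} is applied only on the region where it is valid, and that in the complementary ``bounded'' regime the limit point $\nu$ is automatically forced to have $z_i(\nu)=0$, so that the target value is $0$ and hybrid continuity is not violated there. The identification of $\{\max(z_1,z_2)\le C\}$ with a compact subset of $X$ is what makes both halves of the dichotomy close; everything else is a routine transcription of the Tian--Yau argument with the convex function $u$ in place of the explicit $(n+1)/n$-th power.
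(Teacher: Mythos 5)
Your proposal is correct and follows essentially the same route as the paper's proof: reduce to $\psi_\infty$ using boundedness of $\psi_{\mathrm{pos}}$ and $\theta$, then combine the hybrid Fubini--Study convergence $z_i(p_k)/\log t_k^{-1} \to z_i(\nu)$ with the degree-$(n+2)/n$ homogeneity and continuity of $u$. The only cosmetic difference is that you split cases on boundedness of $\max(z_1(p_k),z_2(p_k))$ along the (sub)net, whereas the paper splits on whether the center $c_{\bar X}(\nu)$ lies in $X$; both dichotomies close for the same reasons, though yours needs one further subnet extraction to make ``bounded or tending to infinity'' exhaustive.
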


\begin{proof}
    Consider any net $p_k$ that converges to some point $\nu \in \lambda^{-1}(0)$. By passing to a subnet, we can assume $p_k$ lies in some Archimedean fiber $\lambda^{-1}(t_k)$ where $t_k \downarrow 0$ as $k \to \infty$.  We must show \begin{equation}\label{eq:CL-hyb-conv}\frac{\psi_{t_k}(p_k)}{\log{t_k^{-1}}} \to \psi_0(\nu).\end{equation}
    
    It follows from the estimates of $U_1, \psi_{\mathrm{pos}}$ and $\theta$ in $x_1, x_2$ from (\ref{item:b1})--(\ref{item:b3}) that, as $k \to \infty$, 
    \begin{align*}\Big|\frac{\psi_{t_k}(p_k)}{\log{t_k^{-1}}} - \frac{\psi_{\infty}(p_k)}{(\log{t_k^{-1}})^{(n+2)/n}}\Big| = \Big| \frac{\psi_\CL(p_k)}{(\log{t_k^{-1}})^{(n+2)/n}} - \frac{\psi_{\infty}(p_k)}{(\log{t_k^{-1}})^{(n+2)/n}}\Big| \to 0.\end{align*}
    
    So we focus on the function $\psi_{\infty}$, and proceed by discussing two cases depending on the center $\mathbf{p} \coloneqq c_{\bar{X}}(\nu) \in \bar{X}$.

    \noindent \emph{$\bullet$ Case 1: $\mathbf{p} \in X$.}
    
    In this case the sequence $z_i(p_k) = o(\log{t_k^{-1}})$ as $k \to \infty$.\footnote{In the language of \cite{Thu07}, the Berkovich space $X^\na$ decomposes as two parts $X^{\beth} \sqcup X^\infty$, and Case 1 exactly means that $\nu$ is in $X^{\beth}$, and $\mathbf{p} \in X^\infty$ in the other case.} 
    By the estimate of $\psi_{\infty,0} - u$ in (\ref{item:b1}) and continuity of $u$, we have \[\lim_{k \to \infty} \frac{\psi_{\infty}(p_k)}{(\log{t_k^{-1}})^{(n+2)/n}} = 0.\] On the other hand, the non-Archimedean potential satisfies
    \[\psi_0(\nu) = u(0,0) = 0\] by homogeneity of $u$. So (\ref{eq:CL-hyb-conv}) is verified. 
    
    \noindent \emph{$\bullet$ Case 2: $\mathbf{p} \not\in X$.} This means $1 \ll \max\{z_1(p_k), z_2(p_k)\}$ as $k \to \infty$. Again, the potential $\psi_{\CL}$ is governed by $\psi_{\infty}$, which is close to $u(z_1, z_2)$ in this region by (\ref{item:b1}). Because $u$ is homogeneous, we have \[\frac{u(z_1(p_k), z_2(p_k))}{(\log{t_k^{-1}})^{(n+2) / n}} = u(\frac{z_1(p_k)}{\log{t_k^{-1}}}, \frac{z_2(p_k)}{\log{t_k^{-1}}}),\] which, by continuity of $u$, as $k \to \infty$ approaches \[u(z_1(\nu),z_2(\nu)) = \psi_0(\nu),\] completing the proof. 
\end{proof}

\subsection{The generalized Calabi Ansatz}\label{Conv:CA}\hfill

This situation is similar to the previous two. Recall Setup \ref{setup}: $Y$ is a smooth projective Fano variety of dimension $n$, $D$ is a reduced SNC anticanonical divisor with components $D_1, \cdots, D_d$ for some $d < n$, cut out by sections $s_i \in H^0(Y, b_iL)$ for some ample ($\mathbf{Q}$-)line bundle $L$, and $Z = D_1 \cap \cdots \cap D_d$ is a smooth connected projective Calabi--Yau variety. 

The bundle $N_{Z / Y}$ admits a direct sum decomposition $b_1 L|_Z \oplus \cdots \oplus b_d L|_Z$. Fix some positive integer $e$ divisible by $b_1, \cdots, b_d, b_1 + \cdots +b_d$ such that $eL$ is very ample and $\dim H^0(\bar{X}, eL) \geq n + d$. We choose sections $\mathcal{S}' \coloneqq \{s_0', \cdots, s_m'\} \subseteq H^0(Y, eL)$ with $m \geq n$ as in Step \hyperref[(1a)]{(1a)}, that is,
\begin{itemize}
    \item $\mathcal{S} \coloneqq \mathcal{S}' \cup \{s_1^{e/b_1}, \cdots, s_d^{e/b_d}\}$ is a basis of $H^0(Y, eL)$;
    \item the intersection of the zero loci of any $k$ members of $\mathcal{S}$ is smooth and connected for $k \in [1,n-1]$, is reduced and of dimension $0$ when $k = n$, and is empty when $k \geq n+1$.
\end{itemize}

Let $N^{\times}$ denote the complement of the zero section in $N_{Z / Y}$, and let $\phi_\FS^{\mathrm{a}}$ be the Archimedean Fubini--Study metric on $L|_Z$ associated to the sections in $\mathcal{S}'$. Following the construction in \S \ref{sec:calabi-model-space}, let $-\log h \coloneqq \phi_{\FS}^{\mathrm{a}} + \psi_{\CY}$ be the potential of the Ricci flat metric on $L|_Z$, where $\psi_\CY$ is the Calabi--Yau potential. By taking multiples, we have a metric $x_i = -\log h_i$ on $b_iL|_Z$ for each $1 \leq i \leq d$. Let $u$ be the solution to the PDE in Theorem~\ref{thm:CTY}. Then the function \[u(x_1, \cdots, x_d)\] is the potential of a Ricci flat metric on a neighborhood of the zero section in $N^{\times}$. We extend $u(x_1, \cdots, x_d)$ to any smooth function $\psi^{\mathrm{a}}$ on $N^{\times}$ bounded away from $D$. The choice of extension does not matter. 

We will work on the hybrid space $(N^\times)^\hyb$. First, on a general fiber $\lambda^{-1}(t)$ for $t \neq 0$, we put a rescaled metric \[\psi_t \coloneqq \frac{\psi^{\mathrm{a}}}{(\log{t^{-1}})^{d/n}}.\]

The central fiber $\lambda^{-1}(0) = (N^\times)^\na$ has essential skeleton $\sk(N^\times) = \QM(N^\times, \sum_{i=1}^d D_i)$. For $1 \leq i \leq d$, define a function \[z_i \coloneq \max_{0 \leq j \leq m} (\log|s_j'|) - \frac{e}{b_i}\log|s_i| \text{ on } (N^\times)^\na.\]By Theorem~\ref{intro:thm-1}, the function \[\psi^\na = u(z_1, \cdots, z_d)  \text{ on } (N^\times)^\na\] solves the (\ref{eq:NA-MA}). The following proposition follows from a similar argument as in Propositions~\ref{prop:TY-hybrid} and~\ref{prop:CL-hybrid}, though the proof is simpler here because there are fewer correction terms.

\begin{proposition}\label{prop:CA-hybrid}
    The function $\psi^\hyb \coloneqq (\psi_t, \psi_0 = \psi^\na)$ is continuous on $(N^\times)^\hyb$.
\end{proposition}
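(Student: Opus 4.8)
The plan is to mimic the argument of Propositions~\ref{prop:TY-hybrid} and~\ref{prop:CL-hybrid}, exploiting that here $\psi^{\mathrm{a}}$ agrees exactly with the generalized Calabi ansatz $u(x_1,\dots,x_d)$ near the zero section $D$, with no analytic correction terms in that region. As in the previous two proofs, it suffices to take an arbitrary net $p_k \to \nu$ in $\lambda^{-1}(0) = (N^\times)^\na$, pass to a subnet lying in Archimedean fibers $\lambda^{-1}(t_k)$ with $t_k \downarrow 0$, and show that $\psi_{t_k}(p_k)/\log t_k^{-1} \to \psi_0(\nu) = u(z_1(\nu),\dots,z_d(\nu))$. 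Using Convention~\ref{conv:tau-and-t} and the scaling $\psi_t = \psi^{\mathrm{a}}/(\log t^{-1})^{d/n}$, the quantity to control is $\psi^{\mathrm{a}}(p_k)/(\log t_k^{-1})^{(n+d)/n}$.

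\textbf{Key steps.} First I would invoke Example~\ref{eg:FS-hybrid-metric}: for each $1 \le i \le d$, the hybrid Fubini--Study construction shows that $z_i/\log t^{-1}$, a priori defined on $\lambda^{-1}(\{t\neq 0\})$, extends continuously across $X^\hyb$, with value $\max_{0\le j\le m}\log|s_j'| - \tfrac{e}{b_i}\log|s_i|$ on $\lambda^{-1}(0)$; hence $z_i(p_k)/\log t_k^{-1} \to z_i(\nu)$. Second, I would split into two cases according to the center $\mathbf{p} \coloneqq c_Y(\nu)$, exactly as in the Collins--Li case. If $\mathbf{p} \in N^\times$ (equivalently, $\nu$ is not in $D^\na$ and $z_i(p_k) = o(\log t_k^{-1})$ for all $i$), then $z_i(\nu)=0$, and since $u$ is homogeneous of degree $(n+d)/n$ and continuous with $u(0,\dots,0)=0$ (Proposition~\ref{prop:extension}), both sides vanish in the limit; for points $p_k$ eventually outside the fixed neighborhood of the zero section on which $\psi^{\mathrm{a}} = u(z_1,\dots,z_d)$, the boundedness of the chosen extension $\psi^{\mathrm{a}}$ away from $D$ makes $\psi^{\mathrm{a}}(p_k)/(\log t_k^{-1})^{(n+d)/n} \to 0$ as well. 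If $\mathbf{p} \notin N^\times$, then $p_k$ eventually lies in the region near $D$ where $\psi^{\mathrm{a}}$ coincides with $u(z_1,\dots,z_d)$, so by homogeneity
\[
\frac{\psi^{\mathrm{a}}(p_k)}{(\log t_k^{-1})^{(n+d)/n}} = u\!\left(\frac{z_1(p_k)}{\log t_k^{-1}},\dots,\frac{z_d(p_k)}{\log t_k^{-1}}\right) \longrightarrow u(z_1(\nu),\dots,z_d(\nu)) = \psi_0(\nu),
\]
using continuity of $u$ and Step~1. Third, I would note the ansatz--zero-section identity and boundedness of the extension are exactly the hypotheses imposed in the setup of \S\ref{Conv:CA}, so the two cases together cover all of $X^\hyb$, and continuity at points of $\lambda^{-1}((0,1])$ is automatic from Example~\ref{eg:cont-fn-on-hybrid} and smoothness of $\psi^{\mathrm{a}}$.

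\textbf{Main obstacle.} The one genuinely delicate point is verifying that the case distinction on $c_Y(\nu)$ is exhaustive and that the ``eventually'' statements about where $p_k$ sits are legitimate: one must check that if $c_Y(\nu) \notin N^\times$ then the $p_k$ (after passing to a subnet) are forced into the punctured neighborhood of $D$ where $\psi^{\mathrm{a}} = u(z_1,\dots,z_d)$ holds, which amounts to a continuity/properness argument for the center map together with the fact that $\{z_1,\dots,z_d\}$ simultaneously large cuts out that neighborhood; and symmetrically, that $c_Y(\nu) \in N^\times$ forces $z_i(p_k)/\log t_k^{-1} \to 0$. Both follow from Example~\ref{eg:cont-fn-on-hybrid} applied to the regular functions $s_j'/s_i^{e/b_i}$ and the definition of $U^\hyb$, but bookkeeping the topology on the hybrid space is where care is needed. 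Everything else is a routine repetition of the Tian--Yau and Collins--Li arguments with fewer error terms, so I would keep the write-up brief and refer back to those proofs.
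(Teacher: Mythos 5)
Your proposal is correct and follows exactly the route the paper intends: the paper does not write out a proof of this proposition but simply states that it "follows from a similar argument as in Propositions~\ref{prop:TY-hybrid} and~\ref{prop:CL-hybrid}, though the proof is simpler here because there are fewer correction terms," which is precisely the net-plus-center-dichotomy argument you reconstruct. The only detail you gloss over is that the ansatz is built from $x_i = -\log h_i = \phi_{\FS}^{\mathrm{a}} + \psi_{\CY}$ rather than from $z_i$ directly, but the bounded correction $\psi_{\CY}$ is absorbed exactly as in the Tian--Yau and Collins--Li cases, so this does not affect the argument.
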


Combining Propositions~\ref{prop:TY-hybrid}, \ref{prop:CL-hybrid}, and \ref{prop:CA-hybrid}, we have shown Theorem~\ref{intro:thm-2}. 

\section{Concluding remarks}\label{sec:concluding}

We conclude with two remarks on recovering the homogeneous degree of Calabi ansatz from measure convergence, and on a volume growth conjecture by Odaka.

\subsection{Recovering the homogeneous degree}\label{rem:scaling}\hfill

In general, convergence of measures does not imply convergence of the corresponding potentials. However, we can recover the homogeneous degree $\alpha$ of the generalized Calabi ansatz $u$ from the hybrid convergence of measures, which we now explain. This observation does not assume anything a priori about the continuous hybrid metric beyond homogeneity, and provides evidence for the canonicity of the generalized Calabi ansatz.
    
We keep the same setting and notations as in \S \ref{Conv:CA}. 
Say $v$ is any $C^2$ function in $d$ variables, homogeneous of degree $\alpha > 0$. Using Proposition~\ref{prop:equiv-hybrid-func}, Example~\ref{eg:FS-hybrid-metric} and homogeneity of $v$, the following hybrid function is continuous on $(N^\times)^\hyb$, \[\psi^\hyb \coloneqq \begin{cases}
    \psi_t \coloneqq v(\psi_1^{\mathrm{a}}, \cdots, \psi_d^{\mathrm{a}}) \cdot (\log{t^{-1}})^{1 - \alpha} & t \neq 0    \\[3pt]
    \psi_0 \coloneqq v(\psi_1^\na, \cdots, \psi_d^\na) & t =0.
\end{cases}\]
    
For convenience set also \[\psi \coloneqq v(\psi_1^\mathrm{a}, \cdots, \psi_d^{\mathrm{a}}).\] We assume now that $\psi^\hyb$ solves the complex or non-Archimedean Monge--Ampère equation fiberwise on $(N^\times)^\hyb$. To be precise, this means that for any $t \neq 0$, the complex Monge--Ampère measure \begin{equation}\label{eq:meas-conv-normal-bundle}\MA_{\mathbf{C}}(\psi_t) = \frac{\CMA(\psi)}{(\log{t^{-1}})^{n(\alpha-1)}}\end{equation} is the volume form on some neighborhood of the zero section in $N^\times$, and $\NAMA(\psi_0)$ is the Lebesgue measure on $\sk(N^\times) \subseteq \lambda^{-1}(0) = (N^\times)^\na$.
    
By Theorem~\ref{thm:Shivaprasad}, up to a multiplicative constant, there is weak convergence of measures \begin{equation}\label{eq:meas-convergence-d}\frac{\MA_{\mathbf{C}}(\psi)}{(\log{t^{-1}})^d} \to \NAMA(\psi_0).\end{equation} Comparing (\ref{eq:meas-conv-normal-bundle}) with (\ref{eq:meas-convergence-d}), we see that if the fiberwise MA measure of the continuous function $\psi^\mathrm{hyb}$ convergences, then $n(\alpha-1) = d$, or $\alpha = \frac{n+d}{n}$. This is exactly the homogeneous degree obtained from the dimensional analysis of the generalized Calabi ansatz in \cite[\S 2.7]{CL24}. 

\subsection{A conjecture of Odaka}\label{rem:Odaka-conj}\hfill

Last but not least, we explain a conjecture of Odaka, which says that, if an open Calabi--Yau manifold admits a complete Ricci flat metric, then the volume growth of geodesic balls therein is bounded below by the dimension of the essential skeleton in the Berkovich analytification; see \cite{Oda20}. An elementary calculation as follows shows that complete Calabi--Yau potentials $\psi$ which are sufficiently asymptotic to the generalized Calabi ansatz $u$, if they exist, satisfy Odaka's conjecture.

The homogeneous degree of $u$ being $(n+d) / n$ translates into \[u \in O(|r|^{\frac{n+d}{n}}),\] where $r = (r_1, \cdots, r_d)$ are the log radius on the split normal bundle $N_{Z/\bar{X}} = b_1L|_Z \oplus \cdots \oplus b_d L|_Z$. Then the distance to the zero section is of order \smash{$O(|r|^{\frac{n+d}{2n}})$}. See \cite[\S 2.7]{CL24} for a calculation. The volume growth dimension $dd^c \psi$, viewed as a Riemannian metric, is \[\mathrm{vd}(dd^c\psi) = \lim_{r \to \infty} \log_r (r^{\frac{2nd}{n+d}}) = (2nd)/(n+d).\] Recall that the essential skeleton of the log pair $(\bar{X},D)$ has dimension $d$ when $D$ has $d$ irreducible components. The inequality in Odaka's conjecture therefore translates into $d \leq (2nd) / (n+d)$ which is equivalent to $d \leq n.$ This is true by the standing assumption made in \S \ref{sec:generalized-calabi-ansatz} to apply the Lefschetz hyperplane theorem. 

\section*{Appendices}
\renewcommand{\thesubsection}{\Alph{subsection}}
\renewcommand{\thefigure}{\thesubsection.\arabic{figure}}
\setcounter{figure}{0}

\renewcommand{\thetheorem}{\thesubsection.\arabic{theorem}}
\setcounter{theorem}{0}

\subsection*{A. Extending the CTY solution}\label{appen:A}\hfill

We prove Proposition~\ref{prop:extension}, extending the solution $u$ in Theorem~\ref{thm:CTY} from $\mathbf{R}^d_{\geq 0}$ to $\mathbf{R}^d$, using a general $C^1$ extension result for convex functions from \cite{AM17}.

\begin{proof}[Proof of Proposition~\ref{prop:extension}]
    Say $t_1, \cdots, t_d$ are the coordinates of $\R^d$. Let $\Sigma$ denote the regular $d$-simplex with vertices $(1, 0, \cdots, 0), \cdots, (0, \cdots, 0, 1).$  We identify the positive octant $\R^d_{\geq 0}$ as the cone over $\Sigma$ with cone direction $(1, \cdots, 1)$. From the construction of $u$ in \cite[\S 5]{CTY24}, for any $t \in \R^d_{\geq 0}$, we have \[u(t) = (\|t\| \cdot u(t/\|t\|))^{(n+d)/n}.\]

By \cite[\S 3]{CTY24}, the restriction of $u$ to $\Sigma$ is positive, convex, and differentiable. We denote this restriction by $\bar{u}$. First extend $u$ to some positive and differentiable function on the affine space $\mathbf{L}_\Sigma \simeq \R^{d-1}$, which can always be achieved but the resulting function is not necessarily convex. But by the proof of \cite[Theorem 1.10]{AM17} there is a strictly positive correction function $\psi$ such that the lower convex envelop $\psi_1$ of of $\bar{u} + \psi$ is still continuously differentiable on $\mathbf{L}_\Sigma$. Since $\bar{u}, \psi > 0$, the constant function $\inf\{\bar{u} + \psi\}$ is strictly positive. This constant function participates in the lower convex envelope defining $\psi_1$, so $\psi_1 > 0$ as well. 

Now we extend $\psi_1$ to a function $u_1$ defined on the open half-plane $\{t_1 + \cdots + t_d > 0\}$ by \[u_1(t) \coloneqq (\|t\| \cdot \psi_1(t/\|t\|))^{(n+d)/n}.\] Since $\psi_1$ is convex, as its perspective function $u_1$ is convex too; see e.g. \cite[\S 3.2.6]{BV04}. 

Observe that $u_1$ extends continuously by zero to $\R^d$, which we denote by $u_2$. We now check that $u_2$ is convex as well. This is equivalent to showing its graph is convex, and follows from the fact that $u_1 > 0$ on $\{t_1 + \cdots + t_d > 0\}.$ 

Lastly, we check $u_2$ is continuously differentiable. By \cite[Theorem 25.2]{Roc97}, to show a convex function is differentiable, it suffices to check its two sided derivatives are finite and agree in $d$ linearly independent directions. Since $\psi_1$ is continuously differentiable on $\mathbf{L}_\Sigma$, this gives us well-defined derivatives in $d-1$ directions. Moreover, $u_2$ is differentiable in the positive diagonal direction $(1, \cdots, 1)$, thanks to the exponent $(n+d)/n$ being greater than $1$. As $u_2$ is finite on $\R^d$, by \cite[Theorem 25.5]{Roc97} it is continuously differentiable. So $u_2$ is our desired extension.
\end{proof}

\setcounter{theorem}{0}  

\bibliographystyle{alpha}
\bibliography{references}

\begin{thebibliography}{BGGJK21}

\bibitem[AH23]{AH23}
R.~Andreasson and J.~Hultgren.
\newblock Solvability of {M}onge-{A}mp\`ere equations and tropical affine structures on reflexive polytopes, 2023.
\newblock arXiv:2303.05276.

\bibitem[AM17]{AM17}
D.~Azagra and C.~Mudarra.
\newblock Whitney extension theorems for convex functions of the classes {$C^1$} and {$C^{1,\omega}$}.
\newblock {\em Proc. Lond. Math. Soc. (3)}, 114(1):133--158, 2017.

\bibitem[AN25]{AN25}
O.~Amini and N.~Nicolussi.
\newblock Moduli of hybrid curves {I}: {V}ariations of canonical measures.
\newblock {\em Ann. Sci. \'Ec. Norm. Sup\'er. (4)}, 58(2):511--588, 2025.

\bibitem[Ber90]{Ber90}
V.~G. Berkovich.
\newblock {\em Spectral theory and analytic geometry over non-{A}rchimedean fields}.
\newblock American Mathematical Society, Providence, RI, 1990.

\bibitem[Ber09]{Ber09}
V.~G. Berkovich.
\newblock A non-{A}rchimedean interpretation of the weight zero subspaces of limit mixed {H}odge structures.
\newblock In {\em Algebra, arithmetic, and geometry: in honor of {Y}u. {I}. {M}anin. {V}ol. {I}}, volume 269 of {\em Progr. Math.}, pages 49--67. Birkh\"auser Boston, Boston, MA, 2009.

\bibitem[BFJ15]{BFJ15}
S.~Boucksom, C.~Favre, and M.~Jonsson.
\newblock Solution to a non-{A}rchimedean {M}onge-{A}mp\`ere equation.
\newblock {\em J. Amer. Math. Soc.}, 28(3):617--667, 2015.

\bibitem[BGGJK21]{BGGJK21}
J.~I. Burgos~Gil, W.~Gubler, P.~Jell, and K.~K\"unnemann.
\newblock Pluripotential theory for tropical toric varieties and non-{A}rchimedean {M}onge-{A}mpère equations, 2021.
\newblock arXiv:2102.07392.

\bibitem[BJ17]{BJ17}
S.~Boucksom and M.~Jonsson.
\newblock Tropical and non-{A}rchimedean limits of degenerating families of volume forms.
\newblock {\em J. \'Ec. polytech. Math.}, 4:87--139, 2017.

\bibitem[BJ22]{BJ22}
S.~Boucksom and M.~Jonsson.
\newblock Global pluripotential theory over a trivially valued field.
\newblock {\em Ann. Fac. Sci. Toulouse Math. (6)}, 31(3):647--836, 2022.

\bibitem[BJ23]{BJ23}
S.~Boucksom and M.~Jonsson.
\newblock A non-{A}rchimedean approach to {K}-stability, {II}: {D}ivisorial stability and openness.
\newblock {\em J. Reine Angew. Math.}, 805:1--53, 2023.

\bibitem[Blu18]{Blu18}
H.~Blum.
\newblock {\em Singularities and K-Stability}.
\newblock PhD thesis, University of Michigan, 2018.

\bibitem[Bou25]{Bou25}
S.~Boucksom.
\newblock From amoebas to pluripotential theory on hybrid analytic spaces, 2025.
\newblock arXiv:2510.10239.

\bibitem[BV04]{BV04}
S.~Boyd and L.~Vandenberghe.
\newblock {\em Convex optimization}.
\newblock Cambridge University Press, Cambridge, 2004.

\bibitem[Cal79]{Cal79}
E.~Calabi.
\newblock M\'etriques {K}\"ahl\'eriennes et fibr\'es holomorphes.
\newblock {\em Ann. Sci. \'Ecole Norm. Sup. (4)}, 12(2):269--294, 1979.

\bibitem[CG25]{CG25}
T.~C. Collins and H.~Guenancia.
\newblock Log {C}alabi--{Y}au manifolds: holomorphic tensors, stability and universal cover, 2025.

\bibitem[Che24]{Che24}
Y.~Chen.
\newblock Calabi--{Y}au metrics of {C}alabi type with polynomial rate of convergence.
\newblock 2024.
\newblock arXiv:2404.18070.

\bibitem[CL24]{CL24}
T.~C. Collins and Y.~Li.
\newblock Complete {C}alabi--{Y}au metrics in the complement of two divisors.
\newblock {\em Duke Math. J.}, 173(18):3559--3604, 2024.

\bibitem[CLD25]{CLD25}
A.~Chambert-Loir and A.~Ducros.
\newblock Formes diff\'erentielles r\'eelles et courants sur les espaces de {B}erkovich, 2025.
\newblock arXiv:1204.6277.

\bibitem[CTY24]{CTY24}
T.~C. Collins, F.~Tong, and S.-T. Yau.
\newblock A free boundary {M}onge-{A}mp\`ere equation and applications to complete {C}alabi--{Y}au metrics, 2024.
\newblock arXiv:2402.10111.

\bibitem[Duc12]{Duc12}
A.~Ducros.
\newblock Espaces de {B}erkovich, polytopes, squelettes et th\'eorie des mod\`eles.
\newblock {\em Confluentes Math.}, 4(4):1250007, 57, 2012.

\bibitem[Fav20]{Fav20}
C.~Favre.
\newblock Degeneration of endomorphisms of the complex projective space in the hybrid space.
\newblock {\em J. Inst. Math. Jussieu}, 19(4):1141--1183, 2020.

\bibitem[GO24]{GO24}
K.~Goto and Y.~Odaka.
\newblock Special {L}agrangian fibrations, {B}erkovich retraction, and crystallographic groups.
\newblock {\em Int. Math. Res. Not. IMRN}, (2):1650--1684, 2024.

\bibitem[Gub16]{Gub16}
W.~Gubler.
\newblock Forms and currents on the analytification of an algebraic variety (after chambert-loir and ducros).
\newblock In M.~Baker and S.~Payne, editors, {\em Nonarchimedean and Tropical Geometry}, pages 1--30, Cham, 2016. Springer International Publishing.

\bibitem[Gut16]{Gut16}
C.~E. Guti\'{e}rrez.
\newblock {\em The Monge-Amp\`{e}re Euqation}.
\newblock Birkhäuser, 2016.

\bibitem[Hei10]{Hei10}
H.-J. Hein.
\newblock {\em On Gravitational Instantons}.
\newblock PhD thesis, Princeton University, 2010.

\bibitem[Hei12]{Hei12}
H.-J. Hein.
\newblock Gravitational instantons from rational elliptic surfaces.
\newblock {\em J. Amer. Math. Soc.}, 25(2):355--393, 2012.

\bibitem[HJMM24]{HJMM24}
K.~Hultgren, M.~Jonsson, E.~Mazzon, and N.~McCleerey.
\newblock Tropical and non-{A}rchimedean {M}onge-{A}mp\`ere equations for a class of {C}alabi--{Y}au hypersurfaces.
\newblock {\em Adv. Math.}, 439:Paper No. 109494, 42, 2024.

\bibitem[HSVZ22]{HSVZ22}
H.-J. Hein, S.~Sun, J.~Viaclovsky, and R.~Zhang.
\newblock Nilpotent structures and collapsing {R}icci-flat metrics on the {K}3 surface.
\newblock {\em J. Amer. Math. Soc.}, 35(1):123--209, 2022.

\bibitem[Jel16]{Jel16}
P.~Jell.
\newblock {\em Real-valued differential forms on Berkovich analytic spaces and their cohomology}.
\newblock PhD thesis, University of Regensburg, 2016.

\bibitem[JM12]{JM12}
M.~Jonsson and M.~Musta\c{t}\u{a}.
\newblock Valuations and asymptotic invariants for sequences of ideals.
\newblock {\em Ann. Inst. Fourier (Grenoble)}, 62(6):2145--2209, 2012.

\bibitem[Jon16]{Jon16}
M.~Jonsson.
\newblock Degenerations of amoebae and {B}erkovich spaces.
\newblock {\em Math. Ann.}, 364(1-2):293--311, 2016.

\bibitem[KK10]{KK10}
B.~Koehler and M.~K\"uhnel.
\newblock On asymptotics of complete {R}icci-flat {K}\"ahler metrics on open manifolds.
\newblock {\em Manuscripta Math.}, 132(3-4):431--462, 2010.

\bibitem[KS06]{KS06}
M.~Kontsevich and Y.~Soibelman.
\newblock {\em Affine structures and non-{A}rchimedean analytic spaces}, pages 321--385.
\newblock Birkh{\"a}user Boston, Boston, MA, 2006.

\bibitem[Lag12]{Lag12}
A.~Lagerberg.
\newblock Super currents and tropical geometry.
\newblock {\em Math. Z.}, 270(3-4):1011--1050, 2012.

\bibitem[Li23]{Li23}
Y.~Li.
\newblock Metric {SYZ} conjecture and non-{A}rchimedean geometry.
\newblock {\em Duke Math. J.}, 172(17):3227--3255, 2023.

\bibitem[Li24]{Li24a}
Y.~Li.
\newblock Metric {SYZ} conjecture for certain toric {F}ano hypersurfaces.
\newblock {\em Camb. J. Math.}, 12(1):223--252, 2024.

\bibitem[Li25a]{Li25b}
Y.~Li.
\newblock Degeneration of {C}alabi--{Y}au metrics and canonical basis, 2025.
\newblock arXiv:2505.11087.

\bibitem[Li25b]{Li25a}
Y.~Li.
\newblock Intermediate complex structure limit for {C}alabi--{Y}au metrics.
\newblock {\em Invent. Math.}, 240(2):459--496, 2025.

\bibitem[Liu11]{Liu11}
Y.~Liu.
\newblock A non-{A}rchimedean analogue of the {C}alabi--{Y}au theorem for totally degenerate abelian varieties.
\newblock {\em J. Differential Geom.}, 89(1):87--110, 2011.

\bibitem[LP24]{LP24}
T.~Lemanissier and J.~Poineau.
\newblock {\em Espaces de {B}erkovich globaux---cat\'egorie, topologie, cohomologie}, volume 353 of {\em Progress in Mathematics}.
\newblock Birkh\"auser/Springer, Cham, [2024] \copyright 2024.

\bibitem[MMS24]{MMS24}
M.~Mauri, E.~Mazzon, and M.~Stevenson.
\newblock Essential skeletons of pairs and {T}emkin's metric.
\newblock {\em Ann. Univ. Ferrara Sez. VII Sci. Mat.}, 70(3):839--857, 2024.

\bibitem[MN15]{MN15}
M.~Musta\c{t}\u{a} and J.~Nicaise.
\newblock Weight functions on non-{A}rchimedean analytic spaces and the {K}ontsevich-{S}oibelman skeleton.
\newblock {\em Algebr. Geom.}, 2(3):365--404, 2015.

\bibitem[MP24]{MP24}
P.~Mesquita-Piccione.
\newblock A non-{A}rchimedean theory of complex spaces and the csc{K} problem, 2024.
\newblock arXiv:2409.06221.

\bibitem[NXY19]{NXY19}
J.~Nicaise, C.~Xu, and T.~Y. Yu.
\newblock The non-{A}rchimedean {SYZ} fibration.
\newblock {\em Compos. Math.}, 155(5):953--972, 2019.

\bibitem[Oda22]{Oda20}
Y.~Odaka.
\newblock Polystable log {C}alabi--{Y}au varieties and gravitational instantons.
\newblock {\em J. Math. Sci. Univ. Tokyo}, 29(1):1--50, 2022.

\bibitem[PS23]{PS23}
L.~Pille-Schneider.
\newblock Global pluripotential theory on hybrid spaces.
\newblock {\em J. \'Ec. polytech. Math.}, 10:601--658, 2023.

\bibitem[PS25]{PS25}
L.~Pille-Schneider.
\newblock Hybrid toric varieties and the non-{A}rchimedean {SYZ} fibration on {C}alabi--{Y}au hypersurfaces, 2025.
\newblock arXiv:2210.05578.

\bibitem[Roc97]{Roc97}
R.~T. Rockafellar.
\newblock {\em Convex analysis}.
\newblock Princeton Landmarks in Mathematics. Princeton University Press, Princeton, NJ, 1997.

\bibitem[Shi22]{Shi22}
S.~Shivaprasad.
\newblock Convergence of volume forms on a family of log {C}alabi--{Y}au varieties to a non-{A}rchimedean measure.
\newblock {\em Math. Z.}, 301(4):3849--3875, 2022.

\bibitem[SS79]{SS79}
K.~Schulz and B.~Schwartz.
\newblock Finite extensions of convex functions.
\newblock {\em Math. Operationsforsch. Statist. Ser. Optim.}, 10(4):501--509, 1979.

\bibitem[Tem16]{Tem16}
M.~Temkin.
\newblock Metrization of differential pluriforms on {B}erkovich analytic spaces.
\newblock In {\em Nonarchimedean and tropical geometry}, Simons Symp., pages 195--285. Springer, [Cham], 2016.

\bibitem[Thu07]{Thu07}
A.~Thuillier.
\newblock G\'eom\'etrie toro\"idale et g\'eom\'etrie analytique non archim\'edienne. {A}pplication au type d'homotopie de certains sch\'emas formels.
\newblock {\em Manuscripta Math.}, 123(4):381--451, 2007.

\bibitem[TY90]{TY90}
G.~Tian and S.-T. Yau.
\newblock Complete {K}{\"a}hler manifolds with zero {R}icci curvature. i.
\newblock {\em Journal of the American Mathematical Society}, 3:579--609, 1990.

\bibitem[Yau78]{Yau78}
S.-T. Yau.
\newblock On the {R}icci curvature of a compact {K}\"ahler manifold and the complex {M}onge-{A}mp\`ere equation. {I}.
\newblock {\em Comm. Pure Appl. Math.}, 31(3):339--411, 1978.

\end{thebibliography}
\nocite{CG25}\nocite{Che24}\nocite{HSVZ22}\nocite{Duc12}\nocite{Gut16}\nocite{Gut16}\nocite{Jel16}\nocite{NXY19}\nocite{KK10}\nocite{Bou25}\nocite{Ber90}\nocite{Gub16}

\end{document}